\newcommand{\chinese}[1]{\begin{CJK*}{UTF8}{gbsn}\! #1 \end{CJK*}}%% for chinese
\newtheorem{Theorem}{Theorem}[section]
\newtheorem{Proposition}[Theorem]{Proposition}
\newtheorem{Lemma}[Theorem]{Lemma}
\newtheorem{Corollary}[Theorem]{Corollary}
\theoremstyle{definition}
\newtheorem{Definition}[Theorem]{Definition}
\newtheorem{Remark}[Theorem]{Remark}
\newcommand{\Del}{\Delta_x}
\newcommand{\Pim}{\Pi_{\mathcal{T}}}
\newcommand{\prst}{\mathcal{P}}
\newcommand{\Ds}{\mathbb{D}_x}
\newcommand{\vuB}{\vc{u}_B}
\newcommand{\jump}[1]{\left[\! \left[ #1 \right] \! \right]}%{\left[ \left[ #1 \right] \right]}
\newcommand{\vrh}{\vr_h}
\newcommand{\ds}{\,\mathrm{d}S_x}
\newcommand{\vtD}{\vt_D}
\newcommand{\vuD}{\vu_D}
\newcommand{\vrD}{\vr_D}
\newcommand{\vth}{\vt_h}
\newcommand{\faces}{\mathcal{E}}
\newcommand{\facesint}{\mathcal{E}_{\rm int}}
\newcommand{\facesext}{\mathcal{E}_{\rm ext}}
\newcommand{\grid}{\mathcal{T}}
\newcommand{\facesK}{\mathcal{E}(K)}
\newcommand{\facesL}{\mathcal{E}(L)}
\newcommand{\facesM}{\mathcal{E}(M)}
\newcommand{\vuh}{\vu_h}
\newcommand{\vvh}{\vv_h}
\newcommand{\TS}{\Delta t}
\newcommand{\Divh}{{\rm div}_h}
\newcommand{\Gradh}{\nabla_h}
\newcommand{\co}[2]{{\rm co}\{ #1 , #2 \}}
\newcommand{\Ov}[1]{\overline{#1}}
\newcommand{\aleq}{\stackrel{<}{\sim}}
\newcommand{\avs}[1]{\left\{\!\!\left\{ #1\right\}\!\!\right\}}
\newcommand{\pd}{\partial}
\newcommand{\bD}{{\mathbb D}}
\newcommand{\Up}{{\rm Up}}
\newcommand{\Fup}{\mbox{F}_h^\alpha}
\newcommand{\muh}{h^\alpha}
\newcommand{\abs}[1]{\left| #1\right|}
\newcommand{\norm}[1]{\left\lVert#1\right\rVert}
\newcommand{\bS}{{\mathbb S}}
\newcommand{\Dhuh}{{\mathbb D}_h \vuh}
\newcommand{\ageq}{\stackrel{>}{\sim}}
\newcommand{\Un}[1]{\underline{#1}}
\newcommand{\vr}{\varrho}
\newcommand{\tvr}{\tilde \vr}
\newcommand{\tvu}{{\tilde \vu}}
\newcommand{\tvt}{\tilde \vt}
\newcommand{\vt}{\vartheta}
\newcommand{\vu}{\vc{u}}
\newcommand{\vm}{\vc{m}}
\newcommand{\vn}{\vc{n}}
\newcommand{\vc}[1]{{\bf #1}}
\renewcommand{\vc}[1]{{\bm #1}}
\newcommand{\Div}{{\rm div}_x}
\newcommand{\Grad}{\nabla_x}
\newcommand{\dx}{\,{\rm d} {x}}
\newcommand{\dt}{\,{\rm d} t }
\newcommand{\dxdt}{\dx  {\rm d} {t}}
\newcommand{\intQ}[1]{\int_Q #1 \D x}
\newcommand{\intfacesint}[1]{\int_{\facesint}  #1 \ds}
\newcommand{\intfacesext}[1]{\int_{\facesext}  #1 \ds}
\newcommand{\vv}{\vc{v}}
\newcommand{\D}{{\rm d}}
\newcommand{\ep}{\epsilon}
\newcommand{\R}{\mathbb{R}}
\newcommand{\vtB}{\vt_B}
\renewcommand{\I}{\mathbb{I}}
\newcommand{\expe}[1]{ \mathbb{E} \left[ #1 \right] }
\newcommand{\Dev}[1]{ \mbox{Dev} \left[ #1 \right] }
\newcommand{\br}{ \nonumber \\ }
\def\softd{{\leavevmode\setbox1=\hbox{d}%
          \hbox to 1.05\wd1{d\kern-0.4ex{\char039}\hss}}}
\definecolor{Cgrey}{rgb}{0.85,0.85,0.85}
\definecolor{Cblue}{rgb}{0.50,0.85,0.85}
\definecolor{Cred}{rgb}{1,0,0}
\definecolor{fancy}{rgb}{0.10,0.85,0.10}
\date{}
\newcommand{\calV}{\mathcal{V}_{t,x}}
\newcommand{\hvt}{\Theta}
\newcommand{\bfphi}{\boldsymbol{\Phi}}
\newcommand{\Bigabs}[1]{\Big| #1\Big|}
\newcommand{\vthB}{\vt_{B,h}}
\newcommand{\bI}{\mathbb I}
\newcommand{\facesi}{\faces _i}
\newcommand{\ve}{\vc{e}}
\newcommand{\Gradd}{\nabla_\faces}
\newcommand{\Piw}{\Pi_W}
\newcommand{\difuh}{\bS_h:\Gradh \vuh }
\newcommand{\intTauQ}[1]{\int_0^{\tau}\int_Q #1 \dxdt}
\newcommand{\vthout}{\vth^{\rm out}}
\newcommand{\hvth}{\Theta_h}
\newcommand{\intn}{\int_{0}^{t^{n+1}}}
\begin{document}

%%%%%%%%%%%%%%%%%%%%%%%%%%%%%%%%

%\title{Convergence and error analysis of compressible fluid flows with random data: Monte Carlo method}
\title{Convergence of numerical methods for the Navier--Stokes--Fourier system
driven by uncertain initial/boundary data}

\author{Eduard Feireisl\thanks{The work of E.F.  was  supported by the
Czech Sciences Foundation (GA\v CR), Grant Agreement 24--11034S.
%21--02411S.
The Institute of Mathematics of the Academy of Sciences of
the Czech Republic is supported by RVO:67985840.
\newline
\hspace*{1em} $^\clubsuit$The work of M.L. was supported by the Deutsche Forschungsgemeinschaft (DFG, German Research Foundation) - project number 233630050 - TRR 146 and  project number 525853336 -  SPP 2410 ``Hyperbolic Balance Laws: Complexity, Scales and Randomness" as well as by Chinesisch-Deutschen Zentrum für Wissenschaftsförderung\chinese{(中德科学中心)} - Sino-German project number GZ1465.
M.L. is grateful to the Gutenberg Research College and  Mainz Institute of Multiscale Modelling  for supporting her research.
\newline
\hspace*{1em} $^\spadesuit$ The work of  B.S. was supported by 
National Natural Science Foundation of China under grant No. 12201437 and Chinesisch-Deutschen Zentrum für Wissenschaftsförderung\chinese{(中德科学中心)} - Sino-German center under the project number GZ1465.
}
\and M\' aria Luk\' a\v cov\' a -- Medvi\softd ov\' a$^{\clubsuit}$
%\and Bangwei She$^{\spadesuit,*}$
\and Bangwei She$^{\spadesuit}$
\and Yuhuan Yuan$^{\dagger}$
}

\date{}

\maketitle

%\medskip

\centerline{$^*$ Institute of Mathematics of the Academy of Sciences of the Czech Republic}
\centerline{\v Zitn\' a 25, CZ-115 67 Praha 1, Czech Republic}
\centerline{feireisl@math.cas.cz}
%\centerline{(feireisl,she)@math.cas.cz}

\medskip

\centerline{$^\clubsuit$Institute of Mathematics, Johannes Gutenberg-University Mainz}
\centerline{Staudingerweg 9, 55 128 Mainz, Germany}
\centerline{lukacova@uni-mainz.de}

\medskip

\centerline{$^\spadesuit$Academy for Multidisciplinary studies, Capital Normal University}
\centerline{West 3rd Ring North Road 105, 100048 Beijing, P. R. China}
\centerline{bangweishe@cnu.edu.cn}

\medskip

\centerline{$^\dagger$School of Mathematics, Nanjing University of Aeronautics and Astronautics}
\centerline{Jiangjun Avenue No. 29, 211106 Nanjing, P. R. China}
\centerline{yuhuanyuan@nuaa.edu.cn}

%\bigskip
%\centerline{\Large \cmag Figures need some modification; Reference is ready.}

\begin{abstract}

We consider the Navier--Stokes--Fourier system governing the motion of a general compressible, heat conducting, Newtonian fluid driven by
random initial/boundary data. Convergence of the stochastic collocation and Monte Carlo numerical methods is shown under
the hypothesis that approximate solutions are bounded in probability. Abstract results are illustrated by numerical experiments for the
Rayleigh--B\' enard convection problem.

\end{abstract}

%\bigskip

{\bf Keywords:} Navier--Stokes--Fourier system, stochastic collocation method, Monte Carlo method, convergence, Rayleigh--B\' enard convection.

{\bf MSC:} 35D35, 35Q79, 35Q30, 65C05, 65C20, 65M22
\bigskip

%\tableofcontents

\section{Introduction}
\label{i}

Continuum fluid mechanics describes the state of a viscous, compressible, and heat conducting fluid in terms of its
mass density $\vr = \vr(t,x)$, the temperature $\vt = \vt(t,x)$, and the velocity $\vu(t,x)$ evaluated at a time $t \in \R$, and a spatial position
$x \in Q$, where $Q \subset \R^d$, $d=2,3$ is the physical domain occupied by the fluid.

\subsection{Field equations -- Navier--Stokes--Fourier system}

The time evolution of $(\vr, \vt, \vu)$ is governed by a system of
field equations reflecting the basic physical principles:
\begin{itemize}
	\item {\bf Mass conservation:}
	\begin{equation} \label{i1}
		\partial_t \vr + \Div (\vr \vu) = 0.
		\end{equation}
	\item {\bf Momentum balance -- Newton's second law:}
	\begin{equation} \label{i2}
		\partial_t (\vr \vu) + \Div (\vr \vu \otimes \vu) + \Grad p = \Div \mathbb{S} + \vr \vc{g}.
		\end{equation}
	
	\item {\bf Internal energy balance -- First law of thermodynamics:}
	\begin{equation} \label{i3}
		\partial_t (\vr e) + \Div (\vr e \vu) + \Div \vc{q} = \mathbb{S} : \Ds \vu - p \Div \vu ,\ \Ds \vu = \frac{1}{2} \left( \Grad \vu + \Grad^t \vu \right).
		\end{equation}
	
	\end{itemize}

The system \eqref{i1}--\eqref{i3} is formally closed
by prescribing the thermal equation of state $p = p(\vr, \vt)$ for the pressure, and the caloric equation of state
$e = e(\vr, \vt)$ for the internal energy. In addition, the Second law of thermodynamics is enforced through Gibbs' relation
\begin{equation} \label{i4}
	\vt D s = De + p D \left( \frac{1}{\vr} \right),
	\end{equation}
where $s = s(\vr ,\vt)$ is the entropy. Accordingly, the internal energy balance \eqref{i3} can be rewritten in the form of entropy balance equation
\begin{equation} \label{i5}
\partial_t (\vr s) + \Div (\vr s \vu) + \Div \left( \frac{\vc{q}}{\vt} \right) = \frac{1}{\vt} \left( \mathbb{S} : \Ds \vu - \frac{\vc{q} \cdot \Grad \vt }{\vt} \right), 	
	\end{equation}
where the right--hand side represents the entropy production rate that should be non--negative for any physically admissible process.

We consider the \emph{Navier--Stokes--Fourier system} characterized by the following constitutive relations for the diffusion terms:
\begin{itemize}
	
	\item {\bf Newton's rheological law:}
	\begin{equation} \label{i6}
	\mathbb{S} = \mathbb{S}(\Ds \vu) = \mu \left( \Grad \vu + \Grad \vu^t - \frac{2}{d} \Div \vu \mathbb{I} \right) + \eta \Div \vu \mathbb{I},\ \mu > 0, \ \eta \geq 0.	
		\end{equation}
	
	\item {\bf Fourier's law:}
	\begin{equation} \label{i7}
		\vc{q} = \vc{q} (\Grad \vt) = - \kappa \Grad \vt,\ \kappa > 0.
		\end{equation}
	
	\end{itemize}

\subsection{Boundary conditions}

The motion of the fluid is driven by the volume force $\vc{g} = \vc{g}(t,x)$ as well as by the initial/boundary conditions. In most real world applications, the volume force $\vc{g}$ coincides
with the gradient of a gravitation potential $\vc{g} = \Grad G$, $G = G(x)$. The related contribution to the energy balance can be easily incorporated in the potential energy and the system is therefore
formally conservative.
The interesting phenomena including patterns and new structures formation observed in real fluids under turbulent regime
are initiated by the interaction of the fluid with the outer world through the boundary conditions. Indeed the two iconic examples of fluid flows in a turbulent
regime out of equilibrium, namely the Rayleigh--B\' enard convection
and the Taylor--Couette flow, are incited by the Dirichlet type boundary conditions for the temperature
\begin{equation} \label{i8}
	\vt|_{\partial Q} = \vt_B
	\end{equation}
and the tangential velocity
\begin{equation} \label{i9}
	\vu|_{\partial Q} = \vuB,\ \vuB \cdot \vc{n} = 0,
	\end{equation}
respectively, see, e.g., Davidson \cite{DAVI}.

The frequently used framework of \emph{incompressible} fluids usually based on the
so--called Oberbeck--Boussinesq approximation provides easier
access and simplifies considerably numerical simulations.
As pointed out by Bormann \cite{Borm2, Borm1}, however, the full compressible system discussed in the present paper is needed for adequate description of the real world phenomena in their full generality.

\subsection{Problems with uncertain data}

In addition to the boundary conditions \eqref{i8} and \eqref{i9}, we prescribe the initial state of the system
\begin{equation} \label{i11}
	\vr(0, \cdot) = \vr_0 ,\ \vt(0, \cdot) = \vt_0,\ \vu(0, \cdot) = \vu_0.
\end{equation}
It is convenient to consider the initial data $\vr_0, \vt_0, \vu_0$ together with the boundary data $\vtB$, $\vuB$ as restrictions of suitable functions $\vrD$, $\vtD$, $\vuD$ defined for any $t \geq 0$ and $x \in \Ov{Q}$,
\begin{equation} \label{i10}
(\vr_0, \vt_0, \vu_0) = (\vrD, \vtD, \vuD)(0, \cdot),  \quad (\vtB, \vuB)= (\vtD, \vuD)|_{\partial Q}.
\end{equation}

The functions $(\vrD, \vtD, \vuD)$ and the volume force $\vc{g}$ will be termed \emph{data} for the Navier--Stokes--Fourier system. We focus on the problems, where the
data are uncertain subjected to {\it a priori} unknown random fluctuations. The goal is to create a suitable theoretical background for implementation of numerical methods, in particular
the stochastic collocation and Monte Carlo methods.  The stochastic collocation method is an example of the \emph{strong stochastic approach}. In this case, data are random variables on the known probability space and the stochastic collocation method computes the explicit random solution.  On the other hand, the main idea of the \emph{weak stochastic approach}, here
represented by the Monte Carlo method, is creating large samples of i.i.d.~random data, performing numerical approximation, and studying convergence of the statistical moments, e.g.~empirical averages, of the resulting approximate solutions via the Strong law of large numbers.

%The adequate mathematical theory for the Navier--Stokes--Fourier system with non--conservative (Dirichlet) data is far from being complete:
Both methods, the stochastic collocation as well as Monte Carlo methods,  have been used extensively in the literature for uncertainty quantification of problems arising in fluid dynamics. We refer the interested reader to the monographs of Le Ma\^{\i}tre and Knio~\cite{knio_book},
Pettersson et al.~\cite{pettersson_book}, Xiu~\cite{xiu_book}, Zhang and Karniadakis~\cite{karniadakis_book}. Rigourous convergence analysis of these uncertainty quantification methods typically relies on the existence of a unique regular solution and pathwise arguments to analyse the convergence in random space, see, e.g., Abgrall and  Mishra~\cite{AM17},
Babu\v{s}ka et al.~\cite{babuska},  Bachmayr et al.~\cite{bachmayr, bachmayr1}, Badwaik et al.~\cite{klingel}, Fjordholm et al. \cite{FjLyMiWe}, Kolley et al.~\cite{weber}, Leonardi et al.~\cite{Leonardi}, Mishra and Schwab \cite{Mishra_Schwab}, Nobile et al.~\cite{nobile},  Tang and Zhou \cite{tang} and the references therein.

In contrast to these works, we cannot use pathwise arguments since the adequate mathematical theory for the Navier--Stokes--Fourier system with non--conservative (Dirichlet) data is far from being complete:
\begin{enumerate}
	\item Solvability of the Navier--Stokes--Fourier system in the framework of strong (smooth) solutions is known only for short time intervals, see the pioneering papers by Nash \cite{Nash2,NAS},
	the more recent treatment by Cho and Kim \cite{ChoKim1},  Valli \cite{Vall2,Vall1}, Valli and Zajaczkowski \cite{VAZA}, as well as the
	the adaptation of the maximal regularity technique in the $L^p-$setting by Kotschote \cite{KOT6}.
	
	\item The theory of weak solutions, in particular for problems with inhomogeneous Dirichlet boundary conditions, has been developed only recently in \cite{FeiNovOpen}.
	Unfortunately, the weak solutions are not known to be uniquely determined by \emph{data}.
	 Moreover, the existence theory developed in \cite{FeiNovOpen} is based on several physically grounded but rather technical restrictions imposed on the form of the equations of state as well as the transport coefficients that are difficult to approach numerically. In the class of fluids described
	by the standard Boyle--Mariotte law, see \eqref{BML} below,
	even the basic {\it a priori} energy bounds are not available, at least not in a straightforward manner. This is  due to an
	uncontrollable  energy/entropy flux through the physical boundary
	characteristic for open fluid systems. The lack of {\it a priori}/stability estimates is one of the principal stumbling blocks to overcome in the present paper.
	
	\item The recent results of Merle et al. \cite{MeRaRoSz, MeRaRoSzbis} and Buckmaster et al. \cite{BuCLGS} indicate that there might be a finite time blow up of smooth solutions for certain compressible fluid models although no specific example
	is known for systems in the full thermodynamic setting, where the pressure depends effectively on the temperature.
	
	\end{enumerate}

We adopt the framework of strong solutions for the Navier--Stokes--Fourier system. Our main working hypothesis, proposed in \cite{FeiLuk2023_SCNS} and exploited in \cite{FeLmShe2022}, asserts that
the smooth solutions experiencing singularity in a finite time are \emph{statistically} negligible.
Recently \cite{BaFeMi}, it has been shown that strong solutions remain smooth as long as the amplitude of the density, temperature and velocity is controlled.
This is reflected at the level of problems with uncertain data by the ad hoc assumption that
the numerical solutions are {\em bounded in probability}, cf. Section
\ref{sec_strategy} below.

\subsection{Strategy of the convergence analysis}\label{sec_strategy}
We propose the following general strategy to study convergence of numerical methods for the randomly driven Navier--Stokes--Fourier system:

\begin{enumerate}
	\item Given a \emph{tight} family (sequence) of random data $D_n = (\vrD, \vtD, \vuD, \vc{g})_n$, generate the associated sequence of approximate (numerical) solutions
	$(\vr, \vt, \vu)_{h_n}[D_n]$, where $h_n \searrow 0$ is the discretization parameter. Make sure the mapping
	\[
	D \in \mbox{data space}\ \mapsto (\vr, \vt, \vu)_h [D] \in \ \mbox{trajectory space}
	\]
	is at least Borel measurable for suitable topologies and any value $h > 0$. %  the discretization parameter.
	In the present paper, the approximate solutions will be generated by
	a finite volume method applied to a stochastic discretization of the random data $D_n$.
	
	\item \emph{Suppose} the sequence $(\vr, \vt, \vu)_{h_n}[D_n]$
 is \emph{bounded in probability} on a given (deterministic) time interval $(0,T)$. Specifically,
 	%satisfies the following condition:
	for any $\ep > 0$, there exists $\Lambda = \Lambda(\ep) > 0$ such that for all $h_n$
	\begin{align}
	 &\mbox{probability} \left\{ \frac1\Lambda \leq \vr_{h_n}(t,x) \leq \Lambda, \ \frac1\Lambda \leq \vt_{h_n}(t,x) \leq \Lambda, \ \abs{\vu_{h_n}(t,x)} \leq \Lambda \ \mbox{a.e. in } (0,T)\times Q  \right\}
	 \br
	 &\hspace{5cm}\geq 1- \ep.
	 \label{BiP}
	\end{align}
%	\[
%	 \mbox{probability} \left\{ \frac1M \leq \vr_{h_n}(t,x) \leq M, \ \frac1M \leq \vt_{h_n}(t,x) \leq M, \ \abs{\vu_{h_n}(t,x)} \leq M, \ \mbox{a.e. } (0,T)\times Q  \right\} \geq 1- \ep.
%	\]
	% \[\cgrey
	% \mbox{probability} \left\{ \sup_{t \in [0,T)} \left( \| (\vr, \vt, \vu)_{h_n} [D_n](t, \cdot) \|_{L^\infty} \right) > M \right\} < \ep \ \mbox{uniformly for}\ n \to \infty.
	% \]
%	\[
%	\mbox{probability}\left\{ \sup_{t \in [0,T)} \left( \left\| \mathcal{C}\Big( (\vr, \vt, \vu )_{h_n} [D_n] \Big) (t, \cdot) \right\|_{L^\infty(Q)} \right) > M \right\} < \ep \ \mbox{uniformly for}\ n \to \infty,
%	\]
%	 where $\mathcal{C}: R^{d + 2} \to [0, \infty]$ is a suitable function. It should be chosen in the way that the above condition fulfils the criteria to get the weak convergence to a measure-valued solution, the weak-strong uniqueness principle as well as the conditional regularity.
	
	\item As the sequence of the data is tight, use the Skorokhod representation theorem to obtain a new sequence of data $(\widetilde{D}_{n_k})_{k=1}^\infty$ enjoying the following properties:
	
	\begin{itemize}
		\item
		$
		{\rm law} [ D_{n_k} ] = {\rm law} [ \widetilde{D}_{n_k} ];
		$
		
		\item $\widetilde{D}_{n_k} \to \widetilde{D}$ a.s., and
		$
		(\tvr, \tvt, \tvu)_{h_{n_k}} = (\vr, \vt, \vu)_{h_{n_k}} [ \widetilde{D}_{n_k} ]$  satisfies 		
	\begin{align*}
	 & \frac1{\widetilde{\Lambda}} \leq \tvr_{h_{n_k}}(t,x) \leq {\widetilde{\Lambda}}, \ \frac1{\widetilde{\Lambda}} \leq \tvt_{h_{n_k}}(t,x) \leq {\widetilde{\Lambda}}, \ \abs{\tvu_{h_{n_k}}(t,x)} \leq {\widetilde{\Lambda}} \  \mbox{a.e. in } (0,T)\times Q
	 \br
	 &\hspace{10.5cm}   \mbox{a.s. for}\ k \to \infty;
	\end{align*}
	% is bounded, meaning
		% \[\cgrey
		% \sup_{t \in [0,T)} \| (\tvr, \tvt, \tvu)_{h_{n_k}} \|_{L^\infty} < \infty \ \mbox{surely for}\ k \to \infty;
		% \]
%		\[
%		\sup_{t \in [0,T)} \left\| \mathcal{C}\Big( (\tvr, \tvt, \tvu)_{h_{n_k}} \Big) (t, \cdot) \right\|_{L^\infty(Q)} < \infty \  \ \mbox{surely uniformly for}\ k \to \infty;
%		\]
		
		\item
		$
		(\tvr, \tvt, \tvu)_{h_{n_k}} \to (\tvr, \tvt, \tvu) \
  % {\cgrey \mbox{weakly-(*) in}\ L^\infty((0,T) \times Q; R^{d + 2})}
		 \mbox{weakly in some}\ L^q((0,T)\times Q; \R^{d + 2}),\ 1 \leq q < \infty.
		$
\end{itemize}
		\item Show that the limit $(\tvr, \tvt, \tvu)$ is a generalized (measure--valued) solution of the Navier--Stokes--Fourier system associated to the data $\widetilde{D}$.
		
		\item Use \emph{weak--strong uniqueness} principle to show that the limit $(\tvr, \tvt, \tvu)$ coincides with the unique strong solution
	$(\vr, \vt, \vu)[\widetilde{D}]$ on the life--span $[0, T_{\rm max})$ of the latter.
	
		\item Use \emph{conditional regularity} result to conclude that $T_{\rm max} > T$.

	\item Use the Gy\" ongy--Krylov theorem to formulate the convergence result in the original probability space.
		
\end{enumerate}

\begin{Remark} \label{RBiP}

The above strategy guarantees convergence of a numerical method under the mere assumption of boundedness in probability stated in \eqref{BiP}.
Besides the frequently used hypothesis that the amplitude of the
approximate sequence remains bounded, condition \eqref{BiP} requires
boundedness below away from zero of the approximate densities and temperatures, necessary for consistency estimates of certain numerical methods.

\end{Remark}

\noindent Let us summarize the new ideas proposed in the present paper:
\begin{itemize}
\item
In the absence of conventional energy estimates, the
ballistic energy plays a crucial role  for the stability of non-conservative  Navier--Stokes-Fourier system
driven by  Dirichlet boundary conditions.
\item
Truly probabilistic approach is applied including the Skorokhod representation theorem and Gy\"ongy-Krylov equivalence principle.  Accordingly, the convergence is established in probability  or  in expectation.
\item
Possibly singular solutions can be handled.
\item
The convergence of Monte Carlo method  includes both  numerical and sample discretization errors.
\item
An example of a bounded consistent numerical method satisfying a discrete analogue  of ballistic energy inequality is provided.
\end{itemize}

The paper is organized as follows. In Section~\ref{s} we recall the existing analytical theory of strong solutions for the Navier--Stokes--Fourier system. In Section~\ref{a} we introduce the concept of a bounded consistent approximation. In addition, we show that any bounded consistent approximation converges to a strong solution. %, see Theorem \ref{convergence-det}.
Solutions with random data are discussed in Section \ref{r}.
The general strategy, delineated in the Steps 1--7 above, is applied to stochastic collocation and Monte-Carlo methods in Section~\ref{MC}. To the best of our knowledge, this is the first result on convergence to a statistical solution for the compressible Navier--Stokes--Fourier system with Dirichlet-type boundary conditions.
The abstract results are illustrated by numerical simulations of the Rayleigh-B\'enard convection in Section~\ref{NumSim}. In agreement
with the theoretical prediction, the statistical approach yields convergence even in the highly turbulent convective regime.
For completeness, we present in Appendix~\ref{AA} an upwind finite volume method used  in our numerical experiments. We prove that it provides a bounded consistent approximation.

\section{Strong solutions}
\label{s}

We review the available analytical results concerning the existence and stability properties of strong solutions to the Navier--Stokes--Fourier system.
For simplicity, we focus on the Boyle--Mariotte law for gases,
\begin{equation} \label{BML}
	p (\vr, \vt) = \vr \vt,\ e(\vr, \vt) = c_v \vt,\ c_v > 0,\ s(\vr, \vt) = c_v \log(\vt) - \log(\vr).
	\end{equation}
In the remaining part of the paper, we always consider the equations of state \eqref{BML} without mentioning this explicitly.

We adopt the framework introduced by
Valli \cite{Vall1}, with the data belonging to the class
\begin{align}
	\vr_D &\in W^{3,2}(Q),\ \inf_Q \vrD \geq \underline{\vr} > 0 , \br
	\vt_D &\in L^2_{\rm loc}([0, \infty); W^{4,2}(Q)) \cap W^{1,2}_{\rm loc}([0, \infty) ; W^{2,2}(Q)), \ \inf_{[0,\infty) \times Q } \vtD \geq \underline{\vt} > 0, \br
	\vu_D &\in L^2_{\rm loc}([0, \infty); W^{4,2}(Q;\R^d)) \cap W^{1,2}_{\rm loc}([0, \infty); W^{2,2}(Q;\R^d)),\ \vuD \cdot \vc{n}|_{\partial Q} = 0, \br
	\vc{g}&\in L^2_{\rm loc}([0, \infty); W^{2,2}(Q; \R^d)) \cap W^{1,2}_{\rm loc}([0, \infty); L^2(Q; \R^d)).
	\label{s1}
	\end{align}
In addition, the data must comply with the natural compatibility conditions,
\begin{align}
\vrD \partial_t \vuD + \vrD \vuD \cdot \Grad \vuD + \Grad p(\vrD, \vtD) = \Div \mathbb{S}(\Ds \vuD) + \vrD \vc{g} \ \mbox{for}\ t = 0, \ x \in \partial Q, \br	
\vrD \partial_t \vtD + \vrD \vuD \cdot \Grad \vtD + \Div \vc{q} (\vtD) = \mathbb{S} (\Ds \vuD) : \Ds \vuD - p(\vrD, \vtD) \Div \vuD \ \mbox{for}\ t = 0,\
x \in \partial Q.
	\label{s2}
	\end{align}

\subsection{Local existence of strong solutions}

We report the following local existence result, see Valli \cite{Vall2,Vall1}.

\begin{Theorem}[{{\bf Local existence}}] \label{Ts1}
Let $Q \subset \R^d$, $d=2,3$ be a bounded domain of class $C^4$.
Suppose the data $(\vrD, \vtD, \vuD, \vc{g})$ belong to the class \eqref{s1} and comply with the compatibility conditions \eqref{s2}.

Then there exists $T_{\rm max}$, $0 < T_{\rm max} \leq \infty$ such that the Navier--Stokes--Fourier system  \eqref{i1}
-- \eqref{i11}, \eqref{BML}
 admits a unique strong solution $(\vr, \vt, \vu)$,
\begin{align}
\vr &\in C([0,T]; W^{3,2}(Q)) \cap C^{1}(0,T; W^{2,2}(Q)) , \br
\vt &\in L^2(0,T; W^{4,2}(Q)) \cap W^{1,2}(0,T; W^{2,2}(Q)) , \br
\vu &\in L^2(0,T; W^{4,2}(Q; \R^d)) \cap W^{1,2}(0,T; W^{2,2}(Q; \R^d))  	
	\label{s3}
	\end{align}
for any $0 < T < T_{\rm max}$.	
	
	\end{Theorem}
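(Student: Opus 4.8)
The plan is to construct the solution by a linearization and fixed-point scheme that exploits the mixed hyperbolic--parabolic structure of the system: the continuity equation \eqref{i1} is a transport equation for $\vr$ carrying no diffusion, hence no smoothing and no boundary condition, the boundary being characteristic since $\vuD \cdot \vc{n}|_{\partial Q}=0$; whereas the momentum balance \eqref{i2} and the internal energy balance \eqref{i3}, after inserting \eqref{i6}, \eqref{i7} and \eqref{BML}, are second-order parabolic for $\vu$ and $\vt$ respectively. First I would homogenize the boundary conditions by writing $\vu = \vuD + \vc{w}$ and $\vt = \vtD + \varphi$, so that $\vc{w},\varphi$ vanish on $\partial Q$; the extra data terms produced by this substitution are controlled in the required norms thanks to the regularity class \eqref{s1}.

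Next I would set up the iteration. Given an approximation $(\tvr,\tvt,\tvu)$ in a suitable high-regularity ball, solve successively: (i) the linear transport equation $\partial_t \vr + \tvu\cdot\Grad \vr + \vr\,\Div\tvu = 0$, $\vr(0,\cdot)=\vr_0$, by the method of characteristics; since $\tvu\cdot\vc{n}=0$ on $\partial Q$ the flow preserves $\Ov{Q}$, the $W^{3,2}$ bound is propagated from the initial datum, and for $T$ small the pointwise bounds $0<\underline{\vr}/2 \le \vr$ are maintained; (ii) the linear parabolic (Lam\'e-type) system $\vr\,\partial_t \vu - \Div\mathbb{S}(\Ds\vu) = -\vr\,\tvu\cdot\Grad\tvu - \Grad p(\vr,\tvt) + \vr\vc{g}$ with $\vu|_{\partial Q}=\vuD$, using $L^2$-maximal regularity for the variable-coefficient problem; (iii) the linear parabolic equation $c_v\vr\,\partial_t\vt - \kappa\Delta\vt = \mathbb{S}(\Ds\vu):\Ds\vu - p(\vr,\vt)\Div\vu - c_v\vr\,\vu\cdot\Grad\vt$ with $\vt|_{\partial Q}=\vtD$. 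The smoothness of the equations of state \eqref{BML} makes all coupling terms polynomial in the unknowns, so the right-hand sides are estimated by products of the high-order norms. One then shows this map sends a closed ball of the space \eqref{s3} into itself and is a contraction in a weaker norm for $T=T(\data)$ small, producing a unique fixed point, which is the desired strong solution on $[0,T]$.

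The role of the compatibility conditions \eqref{s2} is exactly to secure the top time-regularity up to $t=0$: to obtain $\vu,\vt \in L^2(0,T;W^{4,2})\cap W^{1,2}(0,T;W^{2,2})$ the parabolic maximal-regularity estimates require the initial data to match the boundary data not only at zeroth order (implicit in \eqref{i10}) but also at first order in $t$, and \eqref{s2} is precisely the momentum and energy equations restricted to $\{t=0\}\times\partial Q$, i.e.\ the first-order compatibility relations making $\partial_t\vu(0,\cdot)$ and $\partial_t\vt(0,\cdot)$ admissible boundary data. Uniqueness I would obtain by a standard energy (Gr\"onwall) estimate on the difference of two solutions. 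Finally, $T_{\rm max}$ is defined as the supremum of times for which a solution in the class \eqref{s3} exists; since the local construction can be restarted from any interior time, either $T_{\rm max}=\infty$ or the high-order norm blows up, or the strict positivity of $\vr,\vt$ degenerates, as $t\to T_{\rm max}^-$.

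The main obstacle I expect is closing the high-order estimates in the presence of the one-way coupling between the transport equation and the parabolic block: the density gains no regularity from the system and must carry its full $W^{3,2}$ smoothness from the data, so the fixed-point space and the order of the estimates have to be calibrated carefully, with $\vu$ estimated in $W^{4,2}$ in space precisely so that $\Grad\vu$ controls the transport of $\vr$ in $W^{3,2}$, while the inhomogeneous Dirichlet conditions force one to track boundary compatibility throughout. Balancing these regularities, together with maintaining the strict positivity of $\vr$ and $\vt$ uniformly on $[0,T]$, is the delicate part.
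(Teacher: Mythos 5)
The paper offers no proof of Theorem \ref{Ts1} at all: it is imported directly from Valli \cite{Vall2,Vall1}, whose argument is precisely the classical linearization--fixed-point scheme you outline (density transported along characteristics with the boundary characteristic since $\vuD \cdot \vc{n}|_{\partial Q}=0$, linear parabolic problems for $\vu$ and $\vt$ solved by $L^2$ maximal regularity, the compatibility conditions \eqref{s2} read as the first-order conditions securing regularity up to $t=0$, contraction in a weaker norm, and continuation to define $T_{\rm max}$). Your sketch is therefore consistent with the proof in the cited source, and I see no gap in it beyond the level of detail one expects from an outline.
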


\begin{Remark} \label{R1}
	
	It follows from \eqref{s3} that
	\[
	\vt \in C([0,T]; W^{3,2}(Q)),\ \vu \in C([0,T]; W^{3,2}(Q;\R^d)).
	\]
	In particular, $\Div \vu$ is bounded and the standard maximum principle yields
	\[
	\inf_{[0,T] \times Q} \vr > 0,\ \inf_{[0,T] \times Q} \vt > 0.
	\]

	\end{Remark}

\subsection{Conditional regularity}

Suppose that the data $(\vrD, \vtD, \vuD, \vc{g})$ are independent of time. Accordingly, the regularity class \eqref{s1} reduces to
\begin{align}
	\vr_D &\in W^{3,2}(Q),\ \inf_Q \vrD \geq \underline{\vr} > 0 , \
	\vt_D \in W^{3,2}(Q), \ \vtD|_{\partial Q} \in W^{\frac{7}{2},2} (\partial Q) ,\  \inf_Q \vtD \geq \underline{\vt} > 0, \br
	\vu_D &\in W^{3,2}(Q; \R^d), \ \vuD|_{\partial \Omega} \in W^{\frac{7}{2},2} (\partial Q;\R^d),\ \vuD \cdot \vc{n}|_{\partial Q} = 0, \ \vc{g}\in W^{2,2}(Q; \R^d)
	\label{s5}
\end{align}
together with the relevant compatibility conditions
\begin{align}
	\vrD \vuD \cdot \Grad \vuD + \Grad p(\vrD, \vtD) &= \Div \mathbb{S}(\Ds \vuD) + \vrD \vc{g} \ \mbox{for}\ x \in \partial Q, \br	
	\vrD \vuD \cdot \Grad \vtD + \Div \vc{q} (\vtD) &= \mathbb{S} (\Ds \vuD) : \Ds \vuD - p(\vrD, \vtD) \Div \vuD \ \mbox{for}\
	x \in \partial Q.
	\label{s6}
\end{align}

The following result was proved in \cite{BaFeMi}.

\begin{Theorem}[{{\bf Regularity estimates}}] \label{Ts2}
Let $Q \subset \R^d$, $d=2,3$ be a bounded domain of class $C^4$.
Suppose the data $(\vrD, \vtD, \vuD, \vc{g})$ are independent of time, belong to the class \eqref{s5}, and satisfy the compatibility conditions \eqref{s6}.

Then the solution $(\vr, \vt, \vu)$ of the Navier--Stokes--Fourier system \eqref{i1}--\eqref{i11} and \eqref{BML} satisfies
\begin{align}
	\sup_{t \in [0,T]} &\| \vr(t, \cdot) \|_{W^{3,2}(Q)} +
	\int_0^T \left( \| \vt \|^2_{W^{4,2}(Q)} + \| \vu \|^2_{W^{4,2}(Q; \R^d)}    \right) \dt + \int_0^T \left( \| \partial_t \vt \|^2_{W^{2,2}(Q)} + \| \partial_t \vu \|^2_{W^{4,2}(Q; \R^d)}    \right) \dt
	\br
	&\leq c \left(T, \| \vc{g} \|_{W^{2,2}(Q;\R^d)},  \| \vrD^{-1} \|_{W^{2,2}(Q)}, \| \vtD^{-1} \|_{W^{2,2}(Q)},\| \vrD \|_{W^{3,2}(Q)} , \| \vtD \|_{W^{3,2}(Q)}, \| \vuD \|_{W^{3,2}(Q;\R^d)},  \right. \br
	&\quad \quad \quad \left.  \| \vtD \|_{W^{\frac{7}{2},2}(\partial Q)}, \| \vuD \|_{W^{\frac{7}{2},2}(\partial Q;\R^d)},\sup_{[0,T) \times Q} \left( \vr + \vt + |\vu| \right)  \right)
	\label{s7}
	\end{align}
for any $0< T < T_{\rm max}$, where the function $c$ on the right hand side is bounded for bounded arguments.
	
	\end{Theorem}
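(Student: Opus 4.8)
The plan is to treat the pointwise bounds on $(\vr, \vt, |\vu|)$ and the lower bounds $\vr, \vt \geq \underline{c} > 0$—which are encoded in the last argument $\sup_{[0,T) \times Q}(\vr + \vt + |\vu|)$ of the constant $c$ on the right-hand side of \eqref{s7}—as \emph{given}, and to run a bootstrap argument climbing from these $L^\infty$ bounds up to the $W^{4,2}$-in-space, $W^{1,2}$-in-time regularity asserted in \eqref{s7}. The key structural observation is that, under \eqref{BML}, the density obeys the pure transport equation \eqref{i1}, whereas the velocity and temperature obey parabolic equations through $\Div \mathbb{S}$ and $\Div \vc{q}$ respectively. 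Accordingly, $\vr$ can only be estimated by propagation of its initial regularity along the flow, while $(\vu, \vt)$ gain two spatial derivatives from elliptic/parabolic smoothing. Every nonlinear coefficient—the pressure $p(\vr,\vt) = \vr\vt$, the internal energy, and the transport coefficients $\mu, \eta, \kappa$—is a smooth function of $(\vr, \vt)$ evaluated on the compact range fixed by the assumed amplitude bounds, which is precisely why the resulting function $c$ is bounded for bounded arguments.

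First I would establish the basic dissipation estimate. Because the system is open and non-conservative, the naive total energy is not controlled; instead I would work with the \emph{ballistic energy} relative to a time-independent harmonic extension $\tvt$ of the boundary datum $\vtD|_{\partial Q}$, testing the momentum balance by $\vu$ and the internal-energy balance by a $(1 - \tvt/\vt)$-type multiplier so that the uncontrollable boundary flux is absorbed. Combined with the pointwise bounds, this anchors the bootstrap with
\[
\int_0^T \left( \| \Grad \vu \|_{L^2(Q)}^2 + \| \Grad \vt \|_{L^2(Q)}^2 \right) \dt \leq c(\data).
\]

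Next I would upgrade the regularity of $(\vu, \vt)$. Reading the momentum equation as a stationary Lam\'e problem
\[
- \Div \mathbb{S}(\Ds \vu) = \vr \vc{g} - \partial_t(\vr \vu) - \Div (\vr \vu \otimes \vu) - \Grad(\vr \vt)
\]
with inhomogeneous Dirichlet data $\vuD|_{\partial Q}$, elliptic regularity bounds $\| \vu \|_{W^{k+2,2}}$ by the $W^{k,2}$-norm of the right-hand side plus boundary terms; symmetrically, the internal-energy balance furnishes a parabolic equation for $\vt$ controlling $\| \vt \|_{W^{k+2,2}}$. Differentiating both equations in time and invoking the compatibility conditions \eqref{s6} to supply admissible data at $t=0$ controls $\partial_t \vu$ and $\partial_t \vt$ in the spaces of \eqref{s3}. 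The density is handled separately: differentiating \eqref{i1} up to third order and testing gives
\[
\Dh \| \vr \|_{W^{3,2}(Q)}^2 \leq c\left( 1 + \| \vu \|_{W^{4,2}(Q;\R^d)} \right) \| \vr \|_{W^{3,2}(Q)}^2 + \text{(lower order terms)},
\]
which closes by Gr\"onwall once $\vu \in L^2(0,T; W^{4,2})$ is in hand, via the embedding $W^{4,2} \hookrightarrow W^{2,\infty}$.

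The main obstacle is that the transport and parabolic estimates are genuinely coupled at the top derivative level: the Lam\'e and heat right-hand sides contain $\Grad p = \vt \Grad \vr + \vr \Grad \vt$, so the $W^{3,2}$-control of $\vr$ feeds into the $W^{4,2}$-estimate of $(\vu, \vt)$, while the latter is exactly what drives the density transport estimate. The loss of one derivative in the hyperbolic density equation must therefore be compensated by the smoothing of the parabolic block, forcing the two families of estimates to be closed \emph{simultaneously} in a single Gr\"onwall argument rather than sequentially. Keeping all boundary contributions—arising from the inhomogeneous Dirichlet conditions and the integrations by parts at the $W^{4,2}$ level—under control using only the prescribed data norms and the amplitude bound is the delicate bookkeeping that the full proof in \cite{BaFeMi} carries out.
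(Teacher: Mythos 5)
The paper does not actually prove Theorem \ref{Ts2}: it is imported verbatim from the reference \cite{BaFeMi} (``The following result was proved in \cite{BaFeMi}''), so there is no in-paper argument to compare yours against. Your proposal is a reasonable reconstruction of the strategy of that cited work: take the amplitude bounds $\sup_{[0,T)\times Q}(\vr+\vt+|\vu|)$ and the positivity of $\vr,\vt$ as given, anchor the bootstrap with a ballistic-energy estimate (indeed unavoidable here, since the Dirichlet data make the standard energy flux uncontrollable), use parabolic/elliptic smoothing for the $(\vu,\vt)$ block, propagate $W^{3,2}$-regularity of $\vr$ along the flow of the transport equation \eqref{i1}, and close the derivative-loss coupling between the hyperbolic and parabolic parts by a simultaneous Gr\"onwall argument, with the compatibility conditions \eqref{s6} supplying admissible data for the time-differentiated equations. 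You also correctly identify why the constant $c$ in \eqref{s7} is bounded for bounded arguments: all nonlinearities are smooth functions of $(\vr,\vt,\vu)$ evaluated on the compact set fixed by the amplitude bound. However, be aware that your text is a strategy outline rather than a proof: the step you yourself flag as ``delicate bookkeeping''---closing the coupled top-order estimates ($\vr\in W^{3,2}$ feeding the pressure gradient into the $W^{4,2}$-estimates for $(\vu,\vt)$, which in turn drive the transport estimate) together with the boundary terms generated by the inhomogeneous Dirichlet conditions---is precisely the mathematical content of the theorem, and you defer it to \cite{BaFeMi} exactly as the paper does. So your attempt is consistent with, and no less complete than, the paper's treatment, but it should not be mistaken for an independent derivation of \eqref{s7}.
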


Theorem \ref{Ts2} is a conditional regularity result, the norm of the strong solution can be controlled by the norm of the data and the quantity
$\sup_{[0,T) \times Q} \left( \vr + \vt + |\vu| \right)$. In other words, the time $T_{\rm max}$ is characterized by the property
\begin{equation} \label{s7a}
	\lim_{T \to T_{\rm max}-} \sup_{[0,T) \times Q} \left( \vr + \vt + |\vu| \right) = \infty.
	\end{equation}

\begin{Corollary} [{{\bf Conditional regularity}}] \label{Cs1}
	
	Under the hypotheses of Theorem \ref{Ts2}, let $(\vr, \vt, \vu)$ be a strong solution of the Navier--Stokes--Fourier system defined in a maximal time interval
	$[0, T_{\rm max})$.
	
	If $T_{\rm max} < \infty$, then
	\[
	\limsup_{t \to T_{\rm max}-} \ \sup_{Q} \left( \vr(t, \cdot) + \vt (t, \cdot) + |\vu (t, \cdot) | \right) = \infty.
	\]
	
	\end{Corollary}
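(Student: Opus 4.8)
The plan is to obtain the Corollary as a soft consequence of the blow-up characterization \eqref{s7a}, which is itself extracted from the conditional estimate in Theorem \ref{Ts2}. The only genuinely new point is the passage from the \emph{running} space--time supremum in \eqref{s7a} to the \emph{pointwise-in-time} spatial supremum appearing in the statement; everything else is bookkeeping.

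First I would introduce the abbreviation
\[
m(t) := \sup_{x \in Q}\left( \vr(t,x) + \vt(t,x) + |\vu(t,x)| \right), \qquad t \in [0, T_{\rm max}),
\]
and record the elementary identity $\sup_{[0,T)\times Q}\left( \vr + \vt + |\vu| \right) = \sup_{t \in [0,T)} m(t)$, valid for the continuous representatives furnished by Remark \ref{R1}. With this notation, \eqref{s7a} reads precisely $\sup_{t \in [0,T)} m(t) \to \infty$ as $T \to T_{\rm max}-$.

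Next I would establish that $t \mapsto m(t)$ is continuous on $[0, T_{\rm max})$. This is where the regularity enters: by Remark \ref{R1} together with \eqref{s3} we have $\vr, \vt, \vu \in C([0,T]; W^{3,2}(Q))$ for every $T < T_{\rm max}$, and since $d \le 3$ the Sobolev embedding $W^{3,2}(Q) \hookrightarrow C(\Ov{Q})$ upgrades this to $\vr, \vt, \vu \in C([0,T]; C(\Ov{Q}))$. As the spatial supremum is $1$-Lipschitz with respect to the uniform norm, continuity of $m$ follows; in particular $m$ is bounded on every compact subinterval $[0, T_{\rm max} - \delta]$.

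Finally I would argue by contradiction. Suppose $\limsup_{t \to T_{\rm max}-} m(t) = L < \infty$. Then there is $\delta > 0$ with $m(t) \le L + 1$ for all $t \in (T_{\rm max} - \delta, T_{\rm max})$, while continuity gives $m \le C_\delta < \infty$ on $[0, T_{\rm max} - \delta]$. Hence $\sup_{t \in [0, T_{\rm max})} m(t) \le \max(C_\delta, L+1) < \infty$, contradicting \eqref{s7a}. Therefore $\limsup_{t \to T_{\rm max}-} m(t) = \infty$, which is the assertion. The argument is entirely soft once \eqref{s7a} is available; there is no serious analytic obstacle, and the only step that requires care is the continuity of $t \mapsto m(t)$, which is exactly what Remark \ref{R1} and the Sobolev embedding in dimensions $d = 2,3$ supply.
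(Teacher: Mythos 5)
Your proof is correct and takes essentially the same route as the paper, which presents the Corollary as an immediate consequence of the blow-up characterization \eqref{s7a}; what you add is the explicit bookkeeping that converts the running space--time supremum in \eqref{s7a} into the pointwise-in-time spatial supremum of the statement, using the continuity of $t \mapsto \sup_Q(\vr+\vt+|\vu|)(t,\cdot)$ supplied by Remark \ref{R1} and the embedding $W^{3,2}(Q)\hookrightarrow C(\Ov{Q})$ for $d=2,3$. There is no gap.
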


\subsection{Stability with respect to data}

Similarly to the preceding section, suppose the data are independent of time. We introduce the \emph{data space},
\begin{align}
X_D &= \left\{ D = (\vrD, \vtD, \vuD, \vc{g}) \ \Big| \right. \br
&\quad \quad \quad (\vrD, \vtD, \vuD, \vc{g}) \ \mbox{in the class \eqref{s5} satisfying the compatibility
	conditions \eqref{s6}} \Big\}.
\label{s8}
\end{align}
Moreover, we set
\begin{align}
\| (\vrD, \vtD, \vuD, \vc{g}) \|_{X_D} &=  \| \vrD^{-1} \|_{W^{2,2}(Q)} + \| \vtD^{-1} \|_{W^{2,2}(Q)} + \| \vrD \|_{W^{3,2}(Q)} + \| \vtD \|_{W^{3,2}(Q)} + \| \vuD \|_{W^{3,2}(Q;\R^d)} \br
&+ \| \vtD \|_{W^{\frac{7}{2},2}(\partial Q)} + \| \vuD \|_{W^{\frac{7}{2},2}(\partial Q;\R^d)} + \| \vc{g} \|_{W^{2,2}(Q; \R^d)}
\nonumber
\end{align}
together with the metrics
\[
d_D [D_1, D_2] = \| D_1 - D_2 \|_{X_D} \ \mbox{for}\ D_1,  D_2 \in X_D.
\]
It is easy to check that the data space $X_D$ endowed with the metrics $d_D$ is a Polish space, specifically a complete separable metric space.

Combining the local existence result stated in Theorem \ref{Ts1}, with the conditional regularity in Corollary \ref{Cs1}, we obtain:

\begin{Theorem}[{{\bf Stability with respect to data}}] \label{Ts3}
	
	The mapping
	\[
	(\vrD, \vtD, \vuD, \vc{g} ) \equiv D \in X_D \mapsto T_{\rm max}[D] \in (0, \infty] \ \mbox{is lower semicontinuous}.
	\]
	Moreover, if
	\[
	D_n \to D \ \mbox{in}\ X_D,
	\]
	then the associated strong solutions $(\vr, \vt, \vu)[D_n]$ converge to $(\vr, \vt, \vu)[D]$, specifically,
	\begin{align}
	\vr [D_n] &\to \vr [D] \ \mbox{in}\ C([0,T; W^{3,2}(Q)), \br
	\vt [D_n] &\to \vt [D] \ \mbox{in}\ L^2(0,T; W^{4,2}(Q)) \cap W^{1,2}(0,T; W^{2,2}(Q)), \br
	\vu [D_n] &\to \vu [D] \ \mbox{in}\ L^2(0,T; W^{4,2}(Q;\R^d)) \cap W^{1,2}(0,T; W^{2,2}(Q;\R^d))	
		\label{s9}
		\end{align}
	for any $0 < T < T_{\rm max}[D]$.
	
	\end{Theorem}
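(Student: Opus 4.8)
The plan is to combine the two ingredients already recorded in this section---local existence with uniqueness (Theorem~\ref{Ts1}) and the conditional regularity estimate (Theorem~\ref{Ts2}, Corollary~\ref{Cs1})---in a continuation/compactness argument. The two assertions (lower semicontinuity of $D \mapsto T_{\rm max}[D]$ and convergence of the solutions on compact subintervals) are really two faces of the same stability statement, so I would prove them together. Throughout, fix $D \in X_D$ and a threshold time $T$ with $0 < T < T_{\rm max}[D]$; it suffices to show that for all $n$ large enough $T_{\rm max}[D_n] > T$ and that \eqref{s9} holds on $[0,T]$, since $T < T_{\rm max}[D]$ is arbitrary.

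The key observation is that Theorem~\ref{Ts2} gives an \emph{a priori} bound for $(\vr,\vt,\vu)[D_n]$ that depends only on $T$, on the $X_D$-norm of $D_n$, and on the amplitude $\sup_{[0,T)\times Q}(\vr+\vt+|\vu|)$. First I would establish a uniform amplitude bound for the limit problem: since $(\vr,\vt,\vu)[D]$ is a strong solution on $[0,T]$, the embedding recorded in Remark~\ref{R1} gives $(\vr,\vt,\vu)[D] \in C([0,T]\times \Ov Q)$, hence there is a finite constant
\[
	M = \sup_{[0,T]\times Q}\Big( \vr[D] + \vt[D] + |\vu[D]| \Big) < \infty.
\]
The heart of the argument is a continuity/bootstrap claim: for $n$ large the solution $(\vr,\vt,\vu)[D_n]$ exists on all of $[0,T]$ and satisfies the amplitude bound $\sup_{[0,T)\times Q}(\vr+\vt+|\vu|) \le M+1$. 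I would prove this by a \emph{contradiction-free continuation argument}. Define
\[
	T_n = \sup\Big\{ \tau \in (0, T_{\rm max}[D_n]) \ \Big|\ \sup_{[0,\tau)\times Q}\big(\vr+\vt+|\vu|\big)[D_n] \le M+1 \Big\},
\]
and suppose, for contradiction, that $T_n \le T$ for infinitely many $n$. On $[0,T_n]$ the amplitude is controlled by $M+1$; since $D_n \to D$ in $X_D$ the norms $\| D_n \|_{X_D}$ are bounded, so Theorem~\ref{Ts2} furnishes a bound on $(\vr,\vt,\vu)[D_n]$ in the high-regularity space \eqref{s3} that is \emph{uniform in $n$} and independent of $T_n \le T$. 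In particular Corollary~\ref{Cs1} forbids $T_n = T_{\rm max}[D_n]$, so at $t = T_n$ the amplitude actually equals the threshold value, $\sup_{Q}(\vr+\vt+|\vu|)[D_n](T_n,\cdot) = M+1$.

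It remains to derive a contradiction from this by a compactness-and-passage-to-the-limit step. From the uniform \eqref{s3}-bound on $[0,T]$ together with the Aubin--Lions lemma I would extract a subsequence along which $(\vr,\vt,\vu)[D_n]$ converges---strongly in, say, $C([0,T]\times \Ov Q)$ and in the weaker time-integrated norms---to some limit $(\widehat\vr,\widehat\vt,\widehat\vu)$. The uniform bounds pass to this limit, which one checks solves the Navier--Stokes--Fourier system \eqref{i1}--\eqref{i11} with data $D$ on $[0,T]$: the force and the initial/boundary data converge because $D_n \to D$ in $X_D$, and the (at most quadratic) nonlinearities pass to the limit thanks to the strong convergence. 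By the \emph{uniqueness} part of Theorem~\ref{Ts1}, $(\widehat\vr,\widehat\vt,\widehat\vu) = (\vr,\vt,\vu)[D]$, whose amplitude is $\le M$. But then uniform convergence forces $\sup_Q(\vr+\vt+|\vu|)[D_n](T_n,\cdot) \to M$ (using also convergence of the times $T_n$ along a further subsequence), contradicting the equality $=M+1$. Hence $T_n > T$ for all large $n$, which yields $T_{\rm max}[D_n] > T$ (the asserted lower semicontinuity) and simultaneously the uniform amplitude bound $M+1$ on $[0,T]$. Feeding this bound back into Theorem~\ref{Ts2} gives the uniform \eqref{s3}-estimate, whence the full sequence converges (uniqueness of the limit removes the subsequence), establishing \eqref{s9}.

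I expect the main obstacle to be the compactness step in the last paragraph: one must verify that the \emph{uniform} high-order bounds from Theorem~\ref{Ts2} combine with time-derivative control---the $W^{1,2}$-in-time pieces in \eqref{s3}---to yield genuine strong convergence strong enough to pass to the limit in every nonlinear term (in particular $\vr\vu\otimes\vu$, $p(\vr,\vt)$, and the viscous dissipation $\mathbb S:\Ds\vu$) and, crucially, strong enough in the pointwise topology to evaluate the amplitude at the single time $t = T_n$. Ensuring the convergence of the moving times $T_n$ and the uniform-in-space evaluation requires the $C([0,T]\times\Ov Q)$ convergence, which is exactly what the embedding in Remark~\ref{R1} supplies once the \eqref{s3}-bounds are in hand; tracking that the constants in Theorem~\ref{Ts2} stay bounded as $D_n \to D$ is the technical linchpin.
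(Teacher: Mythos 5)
Your strategy coincides with the paper's: Theorem \ref{Ts3} is proved there by exactly the combination you propose -- local existence and uniqueness (Theorem \ref{Ts1}), the conditional regularity estimate (Theorem \ref{Ts2}), and the blow-up criterion (Corollary \ref{Cs1}) fed into a continuation argument -- with the details deferred to \cite[Section 2.1]{FeiLuk2023_NSF}. Your amplitude-threshold definition of $T_n$, the uniform high-regularity bound from Theorem \ref{Ts2}, and the compactness-plus-uniqueness contradiction are a faithful reconstruction of that scheme; the points you leave implicit (that $T_n>0$ for large $n$ because the initial amplitudes converge, that the uniform bounds live on $[0,T_n]$ rather than $[0,T]$, and the evaluation at the moving time $T_n$ via uniform-in-time equicontinuity) are routine to fill in.

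The one genuine shortfall is your final sentence. Compactness (Aubin--Lions) plus uniqueness of the limit yields convergence of the whole sequence only in the topologies in which you actually have compactness: strongly in spaces such as $C([0,T]\times\Ov{Q})$ or $C([0,T];W^{2,2}(Q))$, and merely weakly(-*) with respect to the norms appearing in \eqref{s3}. The conclusion \eqref{s9}, however, asserts \emph{strong} convergence in $C([0,T];W^{3,2}(Q))$ and in $L^2(0,T;W^{4,2})\cap W^{1,2}(0,T;W^{2,2})$; ``uniqueness of the limit removes the subsequence'' cannot produce that, since a bounded sequence need not converge strongly in its bounding norm. This is precisely the subtlety the paper flags: the cited argument in \cite[Section 2.1]{FeiLuk2023_NSF} gives only weak convergence, and the upgrade to strong convergence requires an additional (if standard) step -- for instance, writing the system satisfied by the differences $(\vr,\vt,\vu)[D_n]-(\vr,\vt,\vu)[D]$ and applying to it the parabolic energy/maximal-regularity estimates underlying Theorem \ref{Ts1}, using the strong convergence of the data and the already established uniform convergence of the low-order quantities. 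Your proof should either supply this step or weaken the stated conclusion to weak convergence.
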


The proof of Theorem \ref{Ts3} is the same as in \cite[Section 2.1]{FeiLuk2023_NSF}, based on Theorems \ref{Ts1}, \ref{Ts2} and Corollary \ref{Cs1}.
As a matter of fact, only weak convergence to the strong solutions is claimed in  \cite[Section 2.1]{FeiLuk2023_NSF}, however, it is easy to check that the convergence is in fact strong as stated in \eqref{s9}.

\section{Approximate solutions}
\label{a}

We introduce an abstract concept of \emph{approximate solution}
of the Navier--Stokes--Fourier system associating a trio $(\vr, \vt, \vu)_h$
to given data $D \in X_D$ and a discretization parameter $h>0$.
We require the \emph{approximation method}
\[
D \in X_D \mapsto (\vr, \vt, \vu)_h [D] \in L^1((0,T) \times Q; \R^{d + 2})
\ \mbox{to be Borel measurable for any}\ h > 0.
\]

In applications, the approximate method is a numerical method
-- a finite system of algebraic equations -- that may not be uniquely solvable. The  way how to extract a suitable Borel measurable selection is discussed in \cite[Section 3.2]{FeLmShe2022}.

\subsection{Consistent approximation}

Given data $D_n \in X_D$, we say that the approximate family
$(\vr, \vt, \vu)_{h_n > 0} [D_n]$ is \emph{bounded} if there exists
$\Lambda > 0$ such that
\begin{equation} \label{bound}
\left\| \left( \frac{1}{\vr}, \frac{1}{\vt} \right)_{h_n} \right\|_{L^\infty((0,T) \times Q; \R^2)}
+ \Big\| \left( \vr, \vt , \vu \right)_{h_n} \Big\|_{L^\infty((0,T) \times Q; \R^{d+2})} \leq \Lambda
\end{equation}
for all $h_n \to 0$, cf. \eqref{BiP}. Boundedness means that the approximate solutions remain in the physically admissible region
uniformly with respect to the discretization parameter $h_n$.

Any approximation method should be \emph{consistent} with the Navier--Stokes--Fourier system at least for a certain class of data.
This means that the approximate solutions $(\vr, \vt, \vu)_h$ satisfy the field equations in a suitable weak sense modulo a consistency error vanishing for $h \to 0$.

A suitable platform
to handle weak solutions of the Navier--Stokes--Fourier system
was introduced in \cite{ChauFei,FeiNovOpen}. It is based on retaining a weak form of the continuity equation  \eqref{i1} and
the momentum balance \eqref{i2}, whereas the internal energy balance is replaced by the entropy inequality
\begin{equation} \label{EIn}
	\partial_t (\vr s) + \Div (\vr s \vu) + \Div \left( \frac{\vc{q}}{\vt} \right) \geq \frac{1}{\vt} \left( \mathbb{S} : \Ds \vu - \frac{\vc{q} \cdot \Grad \vt }{\vt} \right) 	
\end{equation}
accompanied with the ballistic energy inequality
\begin{align}
\frac{\D }{\dt} &\intQ{ \left( \frac{1}{2} \vr |\vu|^2 + \vr e(\vr, \vt) -
\Theta \vr s(\vr, \vt)  \right)} + \intQ{ \frac{\Theta}{\vt}
\left( \mathbb{S} : \Ds \vu - \frac{\vc{q} \cdot \vt}{\vt}  \right) }
\br &\leq - \intQ{ \left( \vr s \partial_t \Theta + \vr s \vu \cdot \Grad \Theta - \frac{\kappa\Grad \vt}{\vt} \cdot \Grad \Theta + \vr \vc{g} \cdot \vu \right) },
\label{BIn}
\end{align}
where $\Theta> 0$ is a smooth ``test function'' satisfying the boundary
conditions $\Theta|_{\partial Q} = \vtB$. The ballistic energy balance \eqref{BIn} replaces the more conventional total energy balance as the
energy flux through the boundary for non--conservative systems driven by Dirichlet boundary conditions is not controlled.

Relations \eqref{EIn} and \eqref{BIn} form a robust framework for the
concept of a consistent approximate method. For the sake of simplicity, here and hereafter we focus on the no--slip boundary conditions for the velocity $\vuB \equiv 0$.

\begin{Definition}[{{\bf Bounded consistent approximate method}}]\label{DC1}
	Suppose the data $D$ are independent of time and $\vuB = 0$.
	An approximate method
	\[
	D \in X_D \mapsto (\vr, \vt, \vu)_h [D]
	\]
	is \emph{bounded consistent} if for any sequence of data
	\[
	D_n \to D \ \mbox{in}\ X_D
	\]
	and any
	\emph{bounded} sequence of approximate solutions $(\vr, \vt, \vu)_{h_n} [D_n]$, $h_n \to 0$, there is a subsequence (not relabelled) such that the following hold:
	
	\begin{itemize}
		\item {\bf Compatibility of discrete gradients.} There exist functions
		\[
		\Grad^{h_n} \vt_{h_n} \in L^2((0,T) \times Q; \R^d),\  \Ds^{h_n} \vu_{h_n} \in
		L^2((0,T) \times Q; \R^{d \times d}_{\rm sym})
		\]
		such that
		\begin{equation} \label{cf1}
		\left< e^n_{D\vt}; \bfphi \right> \equiv \int_0^T \intQ{\big( (\vt_{h_n} - \Theta) \Div \bfphi
		+  (\Grad^{h_n} \vt_{h_n} - \Grad \Theta) \cdot \bfphi \big) } 	\dt \to 0 \ \mbox{as}\ h_n \to 0
		\end{equation}	
		for any $\bfphi \in L^2(0,T; W^{1,2}(Q; \R^d))$ and
		$\Theta \in C^1(\Ov{Q})$, $\Theta|_{\partial Q} = \vtB$;
		\begin{equation} \label{cf2}
			\left< e^n_{D\vu}; \mathbb{T} \right> \equiv \int_0^T \intQ{ \vu_{h_n} \cdot \Div \mathbb{T} } \dt
			+ \int_0^T \intQ{ \Ds^{h_n} \vu_{h_n} : \mathbb{T}  } 	\dt \to 0 \ \mbox{as}\ h_n \to 0
		\end{equation}	
		for any $\mathbb{T} \in L^2(0,T; W^{1,2}(Q; \R^{d \times d}_{\rm sym}))$.
		\item  {\bf Consistency error in the equation of continuity.}
		\begin{equation} \label{cf3}
		\left< e^n_\vr; \phi \right> \equiv	\intQ{ \vr_0 \phi (0, \cdot)} +
		\int_0^T \intQ{ \Big( \vr_{h_n} \partial_t \phi + \vr_{h_n} \vu_{h_n} \cdot \Grad \phi \Big) }
		\to 0 \ \mbox{as}\ h_n \to 0
		\end{equation}
		for any $\phi \in W^{1,q}((0,T) \times Q)$, $\phi(T, \cdot) = 0$, $q > 1$.

		\item {\bf Consistency error in the momentum equation.}
		\begin{align}
		\left< e^n_\vm; \bfphi \right> \equiv&
		\intQ{ \vr_0 \vu_0 \cdot \bfphi (0, \cdot)}\br &+
		\int_0^T \intQ{\Big(\vr_{h_n} \vu_{h_n} \cdot \partial_t \bfphi +
			\vr_{h_n} \vu_{h_n} \otimes \vu_{h_n} : \bfphi + p(\vr_{h_n}, \vt_{h_n}) \Div \bfphi   \Big) } \br
			&- \int_0^T \intQ{ \Big( \mathbb{S}(\Ds^{h_n} \vu_{h_n}) : \Grad \bfphi + \vr_{h_n} \vc{g} \cdot \bfphi \Big)  } \dt \to 0 \ \mbox{as}\ h_n \to 0
			\label{cf4}
			\end{align}
			for any $\bfphi \in W^{1,2}((0,T) \times Q; \R^d)$,\
			$\bfphi|_{\partial Q} = 0$, $\bfphi(T, \cdot) = 0$.
		
		\item {\bf Consistency error in the entropy balance.}
		There exists a measurable function $\chi_{h_n}$,
		\begin{equation} \label{cf5bis}
		0 < \underline{\chi} \leq \chi_{h_n} \leq \Ov{\chi} ,\ \chi_{h_n} \to 1\  \mbox{in}\ L^1((0,T) \times Q)
		\ \mbox{as} \ h_n \to 0
		\end{equation}
		such that
		\begin{align}
			\left< e^n_S; \phi \right> \equiv &
		\intQ{ \vr_0 s(\vr_0, \vt_0) \phi(0, \cdot)} \br &+
		\int_0^T \intQ{ \left(\vr_{h_n} s(\vr_{h_n}, \vt_{h_n}) \partial_t \phi +
			\vr_{h_n} s(\vr_{h_n}, \vt_{h_n}) \vu_{h_n} \cdot \Grad \phi - \frac{\kappa
			\Grad^{h_n} \vt_{h_n}}{\vt_{h_n}} \Grad \bf \phi	 \right)  } \dt	\br
		&+ \int_0^T \intQ{ \frac{\phi}{\vt_{h_n}} \left( \mathbb{S}(\Ds^{h_n} \vu_{h_n}) : \Ds^{h_n} \vu_{h_n} +  \kappa \frac{\chi_{h_n}}{\vt_{h_n}} |\Grad^{h_n }\vt_{h_n} |^2         \right) } \dt \to \left< e_S; \phi \right> \
		\mbox{as}\ h_n \to 0
			\label{cf5}
			\end{align}
	for any $\phi \in W^{1,q}((0,T) \times Q)$, $\phi|_{\partial Q} = 0$, $\phi(0, T) = 0$, $q > {d + 1}$, where
	\[
	\left< e_S; \phi \right> \leq 0 \ \mbox{whenever}\ \phi \geq 0.
	\]
	\item {\bf Consistency error in the ballistic energy balance.} For any $\Theta \in C^1([0,T] \times \Ov{Q})$, $\inf_{(0,T) \times Q} \Theta > 0$, $\Theta|_{\partial Q} = \vtB$, there holds	 	
	\begin{align}
	\left< e^n_E ; \psi \right> & \equiv \psi(0)\intQ{\left( \frac{1}{2} \vr_0 |\vu_0|^2 + \vr_0 e(\vr_0, \vt_0) - \Theta(0, \cdot) \vr_0 s(\vr_0, \vt_0) \right)  }	 \br
	&+ \int_0^T \partial_t \psi \intQ{\left( \frac{1}{2} \vr_{h_n} |\vu_{h_n}|^2 + \vrh e(\vr_{h_n}, \vt_{h_n}) - \Theta \vr_{h_n} s(\vr_{h_n}, \vt_{h_n}) \right)} \dt \br
	&- \int_0^T \psi \intQ{\frac{\Theta}{\vt_{h_n}} \left( \mathbb{S}(\Ds^{h_n} \vu_{h_n}) : \Ds^{h_n} \vu_{h_n} +  \kappa \frac{\chi_{h_n}}{\vt_{h_n}} |\Grad^{h_n} \vt_{h_n} |^2         \right)     } \br
	&- \int_0^T \psi \intQ{ \left( \vr_{h_n} s(\vr_{h_n}, \vt_{h_n}) \partial_t \Theta + \vr_{h_n} s(\vr_{h_n}, \vt_{h_n}) \vu_{h_n} \cdot \Grad \Theta -
		\frac{\kappa \Grad^{h_n} \vt_{h_n}}{\vt_{h_n}} \cdot \Grad \Theta \right) } \dt \br &+ \int_0^T \psi \intQ{ \vr_{h_n} \vc{g} \cdot \vu_{h_n}     }\dt \to \left< e_E; \psi \right> \ \mbox{as}\ h_n \to 0
	\label{cf6}
	\end{align}
	for any $\psi \in W^{1,q}(0,T)$, $\psi (T, \cdot) = 0$, $q > 1$,
	where
	\[
	\left<e_E; \psi \right> \geq 0 \ \mbox{whenever}\ \psi \geq 0.
	\]
	\end{itemize}
	\end{Definition}
	
The symbols $\Ds^h \vuh$ and $\Grad^h \vth$ represent approximations of $\Ds \vu$ and $\Grad \vt$, respectively. The conditions \eqref{cf1} and \eqref{cf2} include consistency of the limit with the smooth differential operators as well as the satisfaction of the
Dirichlet boundary conditions for $\vu$ and $\vt$. The function
$\chi_{h_n}$ in \eqref{cf5} and  \eqref{cf6} are specified in the finite volume method presented in Appendix \ref{AA}.
Note carefully that
the concept of bounded consistency introduced in Definition \ref{DC1} is conditional requiring boundedness of the approximate sequence.

\subsection{Convergent approximations}

The following result states that any bounded sequence produced by bounded consistent approximate method is convergent.

\begin{Theorem}[{{\bf Convergence of bounded approximations}}]
	\label{CT1}
	Let
	\[
	D \in X_D \mapsto (\vr,\vt,\vu)_h[D] \in L^1((0,T) \times Q; \R^{d+2})
	\]
	be a bounded consistent method. Suppose the data $D$ are independent of time, $\vuB = 0$, and
	\[
	D_n \to D \ \mbox{in}\ X_D.
	\]
Moreover, let the associated sequence of approximate solutions $(\vr, \vt, \vu)_{h_n}[D_n]$ be \emph{bounded}
for $h_n \to 0$.

Then $(\vr, \vt, \vu)_{h_n}[D_n]$ is convergent, specifically,
\[
(\vr, \vt, \vu)_{h_n}[D_n] \to (\vr, \vt, \vu)[D]
\ \mbox{in}\ L^q((0,T) \times Q; \R^{d+2}),\ 1 \leq q < \infty,
\]	
where $(\vr, \vt, \vu)[D]$ is the unique strong solution of the
Navier--Stokes--Fourier system with the data $D$ in $(0,T) \times Q$.

	\end{Theorem}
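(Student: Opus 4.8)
The plan is to run the weak--strong uniqueness scheme that forms the deterministic core of the strategy in Section \ref{sec_strategy}. \textbf{Step 1 (compactness and the missing dissipation bounds).} By the boundedness hypothesis \eqref{bound} the quantities $\vr_{h_n},\vt_{h_n},\vu_{h_n}$ and their reciprocals $\vr_{h_n}^{-1},\vt_{h_n}^{-1}$ are bounded in $L^\infty((0,T)\times Q)$ uniformly in $n$; passing to a subsequence (not relabelled) I would take weak-$*$ limits and, by the fundamental theorem on Young measures, generate a parametrized probability measure $\{\calV\}$ supported in the compact set $[\Lambda^{-1},\Lambda]^2\times[-\Lambda,\Lambda]^d$ that represents the weak-$*$ limits of every continuous nonlinearity of $(\vr,\vt,\vu)_{h_n}$. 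The only bounds not already supplied by \eqref{bound} are the $L^2$ bounds on the discrete dissipation gradients, which I would extract from the ballistic--energy consistency \eqref{cf6}: fixing a smooth admissible $\Theta$ with $\inf\Theta>0$ and a nonnegative $\psi$ with $\psi(T)=0$, each term on the right of \eqref{cf6} is controlled by the $L^\infty$ bounds together with the (convergent, hence bounded) data norms $\|D_n\|_{X_D}$, the linear entropy--flux term being absorbed by Young's inequality, while the dissipation integral on the left is coercive because $\Theta/\vt_{h_n}\ge \inf\Theta/\Lambda$, $\underline{\chi}\le\chi_{h_n}$ and $\mu>0$. This gives a uniform bound on $\|\Grad^{h_n}\vt_{h_n}\|_{L^2}+\|\Ds^{h_n}\vu_{h_n}\|_{L^2}$. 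Passing to the weak $L^2$ limits and using \eqref{cf1}--\eqref{cf2}, I would identify these limits with the distributional $\Grad\vt$ and $\Ds\vu$ of the weak limits and read off the Dirichlet traces, so that $\vt\in L^2(0,T;W^{1,2}(Q))$ with $\vt|_{\partial Q}=\vtB$ and $\vu\in L^2(0,T;W^{1,2}(Q;\R^d))$ with $\vu|_{\partial Q}=0$ (the full gradient recovered from the symmetric one by Korn's inequality).

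\textbf{Step 2 (the limit is a dissipative measure-valued solution).} Next I would let $h_n\to0$ in the consistency errors \eqref{cf3}--\eqref{cf6}. Since $\vr_{h_n}\vu_{h_n}$, $\vr_{h_n}\vu_{h_n}\otimes\vu_{h_n}$, $p(\vr_{h_n},\vt_{h_n})$ and $\vr_{h_n}s(\vr_{h_n},\vt_{h_n})$ are bounded in $L^\infty$, they converge weak-$*$ to the corresponding Young-measure barycenters with no concentration, $\mathbb{S}(\Ds^{h_n}\vu_{h_n})\rightharpoonup\mathbb{S}(\Ds\vu)$ by linearity, and $\chi_{h_n}\to1$ in $L^1$ by \eqref{cf5bis}. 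The limits of \eqref{cf3}--\eqref{cf4} produce the equation of continuity and the momentum balance in weak form, while \eqref{cf5}--\eqref{cf6}, together with the sign conditions $\langle e_S;\phi\rangle\le0$ and $\langle e_E;\psi\rangle\ge0$, yield the entropy inequality and the ballistic-energy inequality up to a nonnegative dissipation defect of the correct sign. Thus $(\vr,\vt,\vu)$, the Young measure $\{\calV\}$ and the defect together constitute a dissipative measure-valued solution of the Navier--Stokes--Fourier system with the limit data $D$ and the prescribed initial state.

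\textbf{Step 3 (rigidity and strong convergence).} Theorem \ref{Ts1} provides the unique strong solution $(\vr,\vt,\vu)[D]$ on a maximal interval $[0,T_{\rm max})$. The ballistic relative-energy inequality, combined with a Gronwall argument, forces the measure-valued solution to coincide with this strong solution on $[0,\min\{T,T_{\rm max}\})$, so that $\calV=\delta_{(\vr,\vt,\vu)[D](t,x)}$ there. To reach all of $(0,T)$ I would argue by contradiction using the continuation criterion \eqref{s7a}/Corollary \ref{Cs1}: if $T_{\rm max}\le T$, then the barycenter of $\{\calV\}$, equal to the strong solution on $[0,T_{\rm max})$, would stay bounded by $\Lambda$, contradicting $\lim_{t\to T_{\rm max}-}\sup_Q(\vr+\vt+|\vu|)=\infty$; hence $T_{\rm max}>T$ and $\{\calV\}$ is a Dirac mass a.e.\ in $(0,T)\times Q$. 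The Dirac property means the variance $\mytangle{|(\vr,\vt,\vu)-(\Ov{\vr},\Ov{\vt},\Ov{\vu})|^2}$ vanishes, which upgrades the weak convergence to strong convergence in $L^2$ and then, by the uniform $L^\infty$ bound and interpolation, to $L^q$ for every $1\le q<\infty$. Since every subsequence extracted at the outset yields the \emph{same} limit $(\vr,\vt,\vu)[D]$, an Urysohn subsequence argument promotes the convergence to the full sequence.

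The hard part will be Step 3, specifically the interplay of weak-strong uniqueness with conditional regularity. Because the system is non-conservative and driven by Dirichlet data, there is no global-in-time total-energy control, and the relative-energy comparison must be built on the ballistic energy rather than the usual total energy. Closing the argument that $T_{\rm max}>T$ --- ruling out a finite-time blow-up of the strong solution while the approximations remain uniformly bounded --- is exactly where the measure-valued/weak-strong machinery must be coupled with the continuation criterion \eqref{s7a}, and this coupling is the crux of the proof.
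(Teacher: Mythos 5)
Your proposal follows essentially the same route as the paper's proof: uniform $L^2$ bounds on the discrete gradients extracted from the ballistic energy consistency \eqref{cf5bis}--\eqref{cf6}, generation of a Young measure, passage to the limit in \eqref{cf1}--\eqref{cf6} to obtain a dissipative measure-valued solution (the paper handles the lower semicontinuity of the quadratic dissipation terms via truncations $T_k(z)=\min\{z,k\}$, which you gloss over but which is routine), then weak--strong uniqueness followed by the conditional regularity argument of Corollary \ref{Cs1} to rule out $T_{\rm max}\leq T$, exactly as you describe. The only notable difference is that the "hard part" you flag — the ballistic relative-energy/Gronwall comparison — is not re-derived in the paper but cited as the weak--strong uniqueness principle of Chaudhuri \cite[Theorem 4.1]{Chau}, whose framework the limit relations \eqref{cf7}--\eqref{cf12} are verified to fit.
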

	
	\begin{Remark} \label{NC}
		
		We point out that the \emph{existence} of global--in--time
		strong solution ($T_{\rm max} > T$) for the Navier--Stokes--Fourier system \emph{is not} a priori assumed in Theorem \ref{CT1}.
		
		\end{Remark}

\begin{proof}
  Since the method is bounded consistent, there exists a subsequence,
  not relabelled, satisfying \eqref{cf1}--\eqref{cf6}. In particular,
  it follows from \eqref{cf5bis}, \eqref{cf6} that
\begin{align}
\Ds^{h_n} \vu_{h_n} \ &\mbox{are bounded in}\ L^2((0,T) \times Q; \R^{d \times d}_{\rm sym}), \br
\Grad^{h_n} \vt_{h_n} \ &\mbox{are bounded in}\ L^2((0,T) \times Q; \R^{d}) \ \mbox{uniformly for}\ h_n \to 0.
\nonumber
\end{align}

Accordingly, passing to another subsequence as the case may be, we may suppose that the sequence
\[
(\vr_{h_n}, \vt_{h_n}, \vu_{h_n}, \Ds^{h_n} \vu_{h_n}, \Grad^{h_n} \vt_{h_n})
\]
generates a Young measure $\mathcal{V}_{t,x}$ - a parametrized family of probability measures supported by the finite dimensional Euclidean space
\[
\R^{d+2} \times \R^{d \times d}_{\rm sym} \times \R^d
= \left\{ (\tvr, \tvt, \tvu) \in \R^{d+2},\  \widetilde{\mathbb{D}_\vu} \in \R^{d \times d}_{\rm sym},\ \widetilde{\nabla_\vt} \in \R^d    \right\}.
\]
Denote
\begin{align}
	\vr(t,x) = \left< \mathcal{V}_{t,x}, \tilde{\vr} \right>,
	\vt(t,x) = \left< \mathcal{V}_{t,x}, \tilde{\vt} \right>,
	\vu(t,x) = \left< \mathcal{V}_{t,x}, \tilde{\vu} \right>
	\nonumber
	\end{align}
the $L^\infty-$weak(*) limits of $(\vr_{h_n})_{h_n \downarrow 0}$, $(\vt_{h_n })_{h_n \downarrow 0}$,
$(\vu_{h_n})_{h_n \downarrow 0 }$, respectively.	

Letting $h_n \to 0$ in \eqref{cf1} and \eqref{cf2} we may infer that
\begin{align}
\vu \in L^2(0,T; W^{1,2}_0(Q; \R^d)),\
(\vt - \Theta) \in L^2(0,T; W^{1,2}_0 (Q) )
\label{cf7}	
	\end{align}
and
\begin{align}
\Ds \vu (t,x) = \left< \mathcal{V}_{t,x}; \widetilde{\mathbb{D}_u} \right>, \Grad \vt = 	\left< \mathcal{V}_{t,x}; \widetilde{\nabla_\vt} \right>.
	\label{cf8}
	\end{align}
	
	Next, we let $h_n \to 0$ in the consistency approximation
	\eqref{cf3} and \eqref{cf4} obtaining
		\begin{equation} \label{cf9}
	-\intQ{ \vr_0 \phi (0, \cdot)} =
	\int_0^T \intQ{ \Big( \left< \calV; \tvr \right> \partial_t \phi + \left< \calV; \tvr \tvu \right> \cdot \Grad \phi \Big) } \dt
	 \end{equation}
for any $\phi \in W^{1,q}((0,T) \times Q)$, $\phi(T, \cdot) = 0$, $q > 1$,
and		
\begin{align}
	-
	\intQ{ \vr_0 \vu_0 \cdot \bfphi (0, \cdot)} =&
	\int_0^T \intQ{\Big( \left< \calV; \tvr \tvu \right> \cdot \partial_t \bfphi + \left< \calV;
		\tvr \tvu \otimes \tvu \right> : \Grad \bfphi + \left< \calV; p(\tvr, \tvt) \right> \Div \bfphi   \Big) } \dt \br
	&- \int_0^T \intQ{ \Big( \left< \calV; \mathbb{S}(\widetilde{\mathbb{D}_\vu }) \right>: \Grad \bfphi + \left< \calV; \tvr \right> \vc{g} \cdot \bfphi \Big)  } \dt
	\label{cf10}
\end{align}
for any $\bfphi \in W^{1,2}((0,T) \times Q; \R^d)$,\
$\bfphi|_{\partial Q} = 0$, $\bfphi(T, \cdot) = 0$.

A similar step can be performed in the inequalities \eqref{cf5} and  \eqref{cf6}. The only term requiring more attention is the integral
\begin{align}
\int_0^T &\intQ{ \phi \frac{\chi_{h_n}}{\vt^2_{h_n}} |\Grad^{h_n} \vt_{h_n}  |^2 } \dt  \br &\geq
\int_0^T \intQ{ \phi \chi_{h_n}T_k \left( \frac{ |\Grad^{h_n} \vt_{h_n}  |^2} {\vt^2_{h_n}} \right) } \dt \to
\int_0^T \intQ{\phi \left< \calV; T_k \left( \frac{ | \widetilde{\nabla_\vt} |^2 }
	{\tvt^2} \right) \right> } \dt
\nonumber
	\end{align}
for any $\phi \geq 0$ and $T_k(z) = \min\{ z, k\}$. As $k$ is arbitrary, we conclude
\[
\liminf_{h_n \to 0}	\int_0^T \intQ{ \phi \frac{\chi_{h_n}}{\vt^2_{h_n}} |\Grad^{h_n} \vt_{h_n}  |^2 } \dt  \br \geq
	\int_0^T \intQ{\phi \left< \calV;  \frac{ | \widetilde{\nabla_\vt} |^2 }
		{\tvt^2} \ \right> } \dt.
\]
Applying a similar treatment to the integrals containing approximate gradients we may perform the limit in the inequalities \eqref{cf5},
\eqref{cf6} obtaining	
		\begin{align}
	&
	\intQ{ \vr_0 s(\vr_0, \vt_0) \phi(0, \cdot)} \br &+
	\int_0^T \intQ{ \left( \left< \calV;\tvr s(\tvr, \tvt) \right> \partial_t \phi + \left<
		\tvr s(\tvr, \tvt) \tvu \right> \cdot \Grad \phi - \left< \mathcal{V} ; \frac{\kappa
			\widetilde{\nabla_\vt} }{\tvt} \right> \cdot\Grad \bf \phi	 \right)  } \dt	\br
	&+ \int_0^T \intQ{ \phi \left< \mathcal{V} ;\frac{1}{\tvt} \left( \mathbb{S}(\widetilde{\mathbb{D}_\vu }) : \widetilde{ \mathbb{D}_\vu} +  \kappa \frac{1}{\tvt} |\widetilde{\nabla_\vt} |^2         \right) \right> } \dt \leq 0
	\label{cf11}
\end{align}
for any $\phi \in W^{1,q}((0,T) \times Q)$, $\phi \geq 0$, $\phi|_{\partial Q} = 0$, $\phi(0, T) = 0$, $q > {d + 1}$ and
\begin{align}
	&  \psi(0)\intQ{\left( \frac{1}{2} \vr_0 |\vu_0|^2 + \vr_0 e(\vr_0, \vt_0) - \Theta(0, \cdot) \vr_0 s(\vr_0, \vt_0) \right)  }	 \br
	&+ \int_0^T \partial_t \psi \intQ{ \left< \mathcal{V} ;  \frac{1}{2} \tvr |\tvu|^2 + \tvr e(\tvr, \tvt) - \Theta \tvr s(\tvr, \tvt) \right>} \dt \br
	&- \int_0^T \psi \intQ{\Theta \left< \calV;  \frac{1}{\tvt} \left( \mathbb{S}(\widetilde{\mathbb{D}_\vu}) : \widetilde{\mathbb{D}_\vu} +  \frac{\kappa}{\tvt} |\widetilde{\nabla_\vt} |^2         \right)  \right>   } \br
	&- \int_0^T \psi \intQ{ \left( \left< \calV; \tvr s(\tvr, \tvt) \right> \partial_t \Theta + \left< \calV; \tvr s(\tvr, \tvt) \tvu \right> \cdot \Grad \Theta - \left< \calV;
		\frac{\kappa \widetilde{\nabla_\vt}}{\tvt} \cdot \Grad \Theta \right>  \right)} \dt \br
	&+ \int_0^T \psi \intQ{ \left< \calV;
		\tvr \tvu \right> \cdot \vc{g}     }\dt \geq 0
	\label{cf12}
\end{align}
for any $\psi \in W^{1,q}(0,T)$, $\psi \geq 0$, $\psi (T, \cdot) = 0$, $q > 1$
and any $\Theta \in C^1([0,T] \times \Ov{Q})$, $\inf_{(0,T) \times Q} \Theta > 0$, $\Theta|_{\partial Q} = \vtB$.

The relations \eqref{cf7}--\eqref{cf12} are compatible with the abstract framework of \emph{dissipative measure valued solutions}
developed in the context of the Navier--Stokes--Fourier system with Dirichlet boundary conditions by Chaudhuri \cite{Chau}. Seeing that
the weak limits obviously inherit the boundedness property from the
generating sequence, we may use the weak--strong uniqueness
principle \cite[Theorem 4.1]{Chau} to conclude that the limit
$(\vr, \vt, \vu)$ coincides with the unique strong solution
$(\vr, \vt, \vu)[D]$ on its life--span $[0,T_{\rm max})$. Finally,
as the limit is bounded on the whole set $[0,T] \times Q$, the conditional regularity result stated in Corollary \ref{Cs1} yields
$T_{\rm max} > T$. Since the limit is unique, there is no need to extract subsequences and the proof is complete.

\end{proof}

\section{Solutions with random data}

\label{r}

With the deterministic convergence result stated in Theorem \ref{CT1}
at hand, we consider problems with random data.
Let $\{ \Omega, \mathcal{B}, \prst \}$ be a probability basis, meaning $\Omega$ is a measurable set endowed with
a $\sigma-$field  $\mathcal{B}$ of measurable subsets, and a complete probability measure $\prst$.
The data $D$ will be understood as a random variable, meaning a Borel measurable mapping
\[
D : \omega \in \Omega \mapsto D(\omega) \in X_D.
\]
Given random data $D_n \in X_D$, we say that the family of the associated random approximate solutions
$(\vr, \vt, \vu)_{h_n > 0} [D_n]$ is \emph{bounded in probability} if for any $\ep > 0$, there exists
$\Lambda = \Lambda(\ep )> 0$ such that
\begin{align}
\prst &\left\{ \left\| \left( \frac{1}{\vr}, \frac{1}{\vt} \right)_{h_n}[D_n ]\right\|_{L^\infty((0,T) \times Q; \R^2)}
+ \Big\| \left( \vr, \vt , \vu \right)_{h_n}[D_n] \Big\|_{L^\infty((0,T) \times Q; \R^{d+2})} > \Lambda (\ep)	\right\} \leq \ep
\label{boundB}
\end{align}
for all $n$, cf. \eqref{BiP}. More generally, for a sequence of random data ranging in the product space $X_D^M$,
\[
(D^1_n,\dots, D^M_n): \omega \in \Omega \to (D^1,\dots, D^M)_n (\omega) \in X_D,
\]
we define boundedness in probability ``componentwise'': For any $\ep > 0$ there exists $\Lambda(\ep) > 0$ such that
\begin{align}
	\prst &\left\{ \sum_{m=1}^M \left( \left\| \left( \frac{1}{\vr}, \frac{1}{\vt} \right)_{h_n}[D^m_n] \ \right\|_{L^\infty((0,T) \times Q; \R^2)}
	+ \Big\| \left( \vr, \vt , \vu \right)_{h_n}[D^m_n]\Big\|_{L^\infty((0,T) \times Q; \R^{d+2})} \right) > \Lambda (\ep)	\right\} \leq \ep
	\label{boundBC}
\end{align}
for all $n$.

\subsection{Convergence for random data}

Following the general strategy described in Section \ref{sec_strategy} we reformulate Theorem \ref{CT1} in the random setting.
To this end, let us consider a bounded consistent approximation method $(\vr, \vt, \vu)_h[D]$,
\[
D \in X_D \mapsto (\vr, \vt, \vu)_h [D] \in L^1((0,T) \times Q; \R^{d+2}).
\]
As the approximation scheme is  Borel measurable in $D$, the approximate solutions $(\vr, \vt, \vu)_h [D]$ can be interpreted as a family of random variables parametrized by the discretization step $h$ ranging in the Banach space $L^1((0,T) \times Q; \R^{d+2})$.

\subsubsection{Convergence in law}

Denote $\mathfrak{P}[X_D^M]$ the set of Borel probability measures on the Polish space $X_D^M$.
Suppose we are given a sequence of discretization parameters $h_n \to 0$ and a sequence of data
$(D^1_n, \dots, D^M_n)_{n=1}^\infty$ such that the family of the associated laws
(distributions) $\mathcal{L}[D^1_n, \dots, D^M_n] \in \mathfrak{P}[X_D^M]$ is
tight. In addition, assume the associated sequence
of approximate solutions $(\vr, \vt, \vu)_{h_n}[D^1_n, \dots, D^M_n]$ is
\emph{bounded in probability} in the sense specified
in %Section \ref{r}, formula
\eqref{boundBC}.

Consider a new sequence of random variables
\[
\Big([D^1_n, \dots D^M_n] , \Lambda_{n} \equiv \sum_{m=1}^M \left( \| (\vr, \vt, \vu)_{h_n}[D^m_n] \|_{L^\infty ((0,T) \times Q; \R^{d+2})}
+ \| (\vr^{-1}, \vt^{-1} )_{h_n}[D^m_n] \|_{L^\infty ((0,T) \times Q; \R^{2})} \right) \Big)_{n = 1}^\infty
\]
ranging in the Polish space $X_D^M \times \R$. As the sequence is bounded in probability, the joint law $\mathcal{L}[[D^1_n, \dots, D^M_n], \Lambda_n]$ is tight in
$\mathfrak{P}[X_D^M \times \R]$.
Thus we are allowed to apply Skorokhod's representation theorem  (or its more general variant by Jakubowski \cite{Jakub})  to obtain a subsequence $n_k \to \infty$
and a family of random data $[\widetilde{D}^1_{n_k}, \dots, \widetilde{D}^M_{n_k}]$ defined on the standard probability space
\[
\{ [0,1], \widetilde{\mathcal{B}}, \D y \}
\]
satisfying
\[
\mathcal{L}[D^1_{n,k}, \dots, D^M_{n_k}] = \mathcal{L}[\widetilde{D}^1_{n_k},\dots, \widetilde{D}^M_{n_k}], \
[\widetilde{D}^1_{n_k},\dots, \widetilde{D}^M_{n_k}] \to [\widetilde{D}^1, \dots, \widetilde{D}^M]
\in X_D^M \ \D y \ \mbox{surely}
\]
with the associated sequence of approximate solutions
\[
\left[ (\tvr, \tvt, \tvu)_{h_{n_k}}^1, \dots, (\tvr, \tvt, \tvu)_{h_{n_k}}^M \right] = (\vr, \vt, \vu)_{h_{n_k}}[\widetilde{D}_{n_k}^1, \dots, \widetilde{D}_{n_k}^M ],
\]
\[
\widetilde{\Lambda}_{n_k}^m =
 \| (\tvr, \tvt, \tvu)_{h_{n_k}}^m \|_{L^\infty ((0,T) \times Q; \R^{d+2})}
+ \| (\tvr^{-1}, \tvt^{-1} )_{h_{n_k}}^m \|_{L^\infty ((0,T) \times Q; \R^{2})}
\to \widetilde{\Lambda}\ \D y \ \mbox{surely}
\]
for any $m = 1, \dots, M$.
In particular, the new sequence of approximate solutions
\[
(\vr, \vt, \vu)_{h_{n_k}} [\widetilde{D}_{n_k}^m]
\]
is bounded $\D y$ surely for any $m=1, \dots, M$.

Consequently, Theorem \ref{CT1} can be applied pathwise on the new
probability space yielding the conclusion
\begin{equation} \label{r2}
	(\vr, \vt, \vu )_{h_{n_k}} [\widetilde{D}_{n_k}^m] \to (\vr, \vt, \vu )[\widetilde{D}^m] \ \mbox{in} \
	L^q((0 , T) \times Q; \R^{d+2}) \ \mbox{ for } h_{n_k} \to 0 \mbox{ and any}\ 1 \leq q < \infty
\end{equation}
$\D y$ surely for any fixed $m=1, \dots, M$. Seeing that the Skorokhod representation
of the approximate method shares the same law with the original one,
we have shown the following result.

\begin{Proposition} [{{\bf Convergence in law}}] \label{Pr1}
	Suppose the data are independent of time and $\vuB = 0$. Let
	\[
	D \in X_D \mapsto (\vr, \vt, \vu)_h[D]
	\]
be a bounded consistent approximate method. 	
	Let $h_n \to 0$, and let
	\[
	[D^1_n, \dots, D^M_n ]_{n=1}^\infty,\ ( \mathcal{L}[D^1_n, \dots, D^M_n] )_{n=1}^\infty \ \mbox{tight in}\ \mathfrak{P}[X_D^M]
	\]
	be a sequence of data. Finally,
	suppose  the associated sequence of approximate solutions
	\[
	(\vr, \vt, \vu)_{h_n}[D^1_n, \dots, D^M_n]
	\]
	is bounded in probability in the sense specified in \eqref{boundBC}.

	Then there exists a subsequence $n_k \to \infty$ and $D \in X_D$ such that
	\[
	\mathcal{L}[D^1_{n_k}, \dots, D^M_{n_k}] \to \mathcal{L}[D^1, \dots, D^M] \ \mbox{narrowly in}\ \mathfrak{P}[X^M_D],
	\]
	\[
	\mathcal{L}\Big[(\vr, \vt, \vu)_{h_{n_k}}[D^1_{n_k}, \dots, D^M_{n_k}] \Big]
	\to \mathcal{L}\Big[(\vr, \vt, \vu)[D^1, \dots, D^M] \Big] \ \mbox{narrowly in}\ \mathfrak{P}\Big[ L^q((0, T) \times Q; \R^{d+2})^M \Big]
	\]
	for any $1 \leq q < \infty$.
	\end{Proposition}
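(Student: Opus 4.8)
The plan is to lift the deterministic convergence result, Theorem \ref{CT1}, to the level of laws by means of the Skorokhod representation theorem, exactly along the lines of the discussion preceding the statement. Since Theorem \ref{CT1} already delivers pathwise convergence, the only genuinely new ingredient is the probabilistic bookkeeping, so I would organise the argument around producing an almost surely convergent copy of the data sequence on which the deterministic theorem can be invoked pathwise.

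First, I would augment the random data by the scalar boundedness constants: consider the $X_D^M \times \R$-valued random variables $\big([D^1_n, \dots, D^M_n], \Lambda_n\big)$, where $\Lambda_n$ is the sum of the $L^\infty$ norms appearing in \eqref{boundBC}. Since $\Lambda_n$ is a measurable function of $[D^1_n, \dots, D^M_n]$ through the Borel measurable approximation map, these are well-defined random variables. I would then check that the joint law is tight in $\mathfrak{P}[X_D^M \times \R]$: the $X_D^M$-marginals are tight by hypothesis, while tightness of the $\R$-marginal is precisely the content of the boundedness-in-probability assumption \eqref{boundBC}, which says that for every $\ep>0$ there is $\Lambda(\ep)$ with $\prst\{\Lambda_n > \Lambda(\ep)\} \le \ep$ uniformly in $n$. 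Tightness of both marginals on Polish spaces yields tightness of the joint law on the finite product.

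Next, I would apply Skorokhod's representation theorem (or Jakubowski's variant \cite{Jakub}) to extract a subsequence $n_k$ and representatives $\big([\widetilde{D}^1_{n_k}, \dots, \widetilde{D}^M_{n_k}], \widetilde{\Lambda}_{n_k}\big)$ on the standard space $\{[0,1], \widetilde{\mathcal{B}}, \D y\}$ that share the laws of the original variables and converge $\D y$-surely to a limit $\big([\widetilde{D}^1, \dots, \widetilde{D}^M], \widetilde{\Lambda}\big)$. The crucial point is that, since $\widetilde{\Lambda}_{n_k} \to \widetilde{\Lambda} < \infty$ $\D y$-surely, the corresponding approximate solutions $(\vr,\vt,\vu)_{h_{n_k}}[\widetilde{D}^m_{n_k}]$ are bounded in the sense of \eqref{bound} $\D y$-surely; this is exactly the deterministic hypothesis required by Theorem \ref{CT1}. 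Applying that theorem pathwise, for $\D y$-a.e. fixed $y$ and each fixed $m$, produces the strong convergence \eqref{r2} to the strong solution $(\vr,\vt,\vu)[\widetilde{D}^m]$.

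Finally, I would transfer the conclusions to laws. Almost sure convergence $[\widetilde{D}^m_{n_k}] \to [\widetilde{D}^m]$ implies narrow convergence of the laws, and since the Skorokhod copies share the laws of $[D^m_{n_k}]$, we obtain $\mathcal{L}[D^1_{n_k}, \dots, D^M_{n_k}] \to \mathcal{L}[\widetilde{D}^1, \dots, \widetilde{D}^M]$; identifying the limit with $[\widetilde{D}^1, \dots, \widetilde{D}^M]$ gives the first assertion. Likewise, the $\D y$-sure convergence \eqref{r2} of the solution copies to $(\vr,\vt,\vu)[\widetilde{D}^m]$ yields narrow convergence of their laws in $\mathfrak{P}[L^q((0,T)\times Q;\R^{d+2})^M]$, and because both the approximation map and the solution map are Borel measurable in the data, the laws of the Skorokhod solution copies coincide with those of the original approximate solutions, which gives the second assertion. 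I expect the main subtlety to be the verification that the representatives are genuinely bounded $\D y$-surely, i.e.\ that the limiting constant $\widetilde{\Lambda}$ is almost surely finite, which is where \eqref{boundBC} is indispensable; a secondary care point is confirming measurability of the relevant maps so that the equality of data laws propagates to equality of solution laws.
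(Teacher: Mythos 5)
Your proposal is correct and follows essentially the same route as the paper: augmenting the data with the boundedness constant $\Lambda_n$, using tightness in $\mathfrak{P}[X_D^M \times \R]$ to invoke the Skorokhod/Jakubowski representation, applying Theorem \ref{CT1} pathwise on the representative space, and transferring back via equality of laws. The two "care points" you flag (a.s.\ finiteness of $\widetilde{\Lambda}$ and Borel measurability of the approximation/solution maps so that equality of data laws propagates to solution laws) are handled in the paper exactly as you handle them, the latter implicitly via the assumed Borel measurability of the method.
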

	
	\begin{Remark} \label{PR1}
	If, in addition, we assume
	\begin{align}
	[D^1_n, \dots, D^M_n] &\to [D^1, \dots, D^M] \ \mbox{in law for } n \to \infty \br &\left( \Leftrightarrow \
	\mathcal{L}[D^1_n, \dots, D^M_n] \to \ \mathcal{L}[D^1, \dots, D^M] \ \mbox{narrowly in} \ \mathfrak{P}[X_D] \right)
	\end{align}
	in Proposition \ref{Pr1}, then there is no need to pass to a subsequence:
	\[
	\mathcal{L}\Big[(\vr, \vt, \vu)_{h_{n}}[D^1_{n}, \dots, D^M_n] \Big]
	\to \mathcal{L}\Big[(\vr, \vt, \vu)[D^1, \dots, D^M] \Big] \ \mbox{narrowly in}\ \mathfrak{P}\Big[ L^q((0, T) \times Q; \R^{d+2}) \Big]
	\]
	for $ n \to \infty$ and any $1 \leq q < \infty$.

		\end{Remark}

\subsubsection{Convergence in probability}

Due to the fact that the limit $(\vr,\vt,\vu)[D]$ is the
\emph{unique} strong solution of the Navier--Stokes--Fourier system, we may apply the Gy\" ongy--Krylov lemma \cite{Gkrylov} and obtain a strong version of Proposition \ref{Pr1}, see
\cite[Section 6]{FeiLuk2023_SCNS} for details.

\begin{Proposition} [{{\bf Convergence in probability}}] \label{Pr2}
	
	In addition to the hypotheses of Proposition~\ref{Pr1} (with $M=1$), suppose that
	\[
	D_n \to D \ \mbox{in}\ X_D \ \mbox{in probability as }  n \to \infty.
	\]

	Then
	\[
	(\vr, \vt, \vu)_{h_{n}}[D_{n}]
	\to (\vr, \vt, \vu)[D] \ \mbox{in}\ L^q((0, T) \times Q; \R^{d+2}) \ \mbox{in probability as } h_n \to 0
	\]
	for any $1 \leq q < \infty$.
\end{Proposition}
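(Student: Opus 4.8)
The plan is to upgrade the convergence in law obtained in Proposition~\ref{Pr1} to convergence in probability by invoking the Gy\"ongy--Krylov characterization \cite{Gkrylov}: a sequence of random variables $X_n$ valued in a Polish space $E$ converges in probability if and only if, for every pair of subsequences, the joint laws of the corresponding pairs possess a further subsequence converging narrowly to a probability measure on $E\times E$ that is concentrated on the diagonal. In order to identify the limit at the same time, I would apply the criterion in the enlarged Polish space $E = X_D \times L^q((0,T)\times Q;\R^{d+2})$ to the random variables $X_n = \big(D_n,\,(\vr,\vt,\vu)_{h_n}[D_n]\big)$, so that both the data and the approximate solutions are tracked simultaneously.

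First I would verify the diagonal condition. Fix two subsequences and consider the paired data $[D_{n_k},D_{m_k}]$. Their joint laws are tight in $\mathfrak{P}[X_D^2]$, since tightness of the two marginals forces tightness of the joint law. Componentwise boundedness in probability of the paired approximate solutions, in the sense of \eqref{boundBC} with $M=2$, follows from the $M=1$ hypothesis by a union bound. Hence Proposition~\ref{Pr1} applies to this pair: along a further subsequence the laws $\mathcal{L}\big[(\vr,\vt,\vu)_{h_{n_k}}[D_{n_k}],\,(\vr,\vt,\vu)_{h_{m_k}}[D_{m_k}]\big]$ converge narrowly to $\mathcal{L}\big[(\vr,\vt,\vu)[D^1],\,(\vr,\vt,\vu)[D^2]\big]$, where $[D^1,D^2]$ carries the narrow limit of $[D_{n_k},D_{m_k}]$.

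The decisive point is that the hypothesis $D_n\to D$ in probability forces $D^1=D^2=D$ almost surely: both subsequences converge in probability to the same $D$, so the pair $(D_{n_k},D_{m_k})$ converges in probability, and hence in law, to $(D,D)$, a measure concentrated on the diagonal of $X_D\times X_D$. Since the strong solution $(\vr,\vt,\vu)[\,\cdot\,]$ is \emph{uniquely} determined by the data (Theorem~\ref{Ts1}) and depends on it in a Borel measurable way (Theorem~\ref{Ts3}), we get $(\vr,\vt,\vu)[D^1]=(\vr,\vt,\vu)[D^2]$ surely, so the limiting law of the full pair $(X_{n_k},X_{m_k})$ is concentrated on the diagonal of $E\times E$. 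The Gy\"ongy--Krylov lemma then yields $X_n\to (D,Y)$ in probability for some random variable $Y$; the first component is $D$ because $D_n\to D$ in probability. To identify $Y$, note that the limiting joint law inherited from the Skorokhod construction underlying Proposition~\ref{Pr1} equals $\mathcal{L}\big[(D,(\vr,\vt,\vu)[D])\big]$, which is supported on the graph $\{(d,(\vr,\vt,\vu)[d])\}$ of the solution map; therefore $(D,Y)$ lies on this graph $\prst$-a.s., i.e.\ $Y=(\vr,\vt,\vu)[D]$ almost surely, and projecting onto the second factor gives the asserted convergence in $L^q$ in probability.

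I expect the main obstacle to lie in the bookkeeping of the second step, namely in genuinely matching the paired data sequence $[D_{n_k},D_{m_k}]$ against the hypotheses of Proposition~\ref{Pr1} (tightness of the joint data laws and componentwise boundedness in probability for $M=2$) so that the proposition may legitimately be invoked, together with the careful identification of the in-probability limit through the support-on-the-graph argument. Both rely crucially on the uniqueness of the strong solution, which is precisely what collapses the limiting law onto the diagonal.
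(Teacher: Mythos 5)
Your proposal is correct and follows essentially the same route as the paper: the paper's proof of Proposition~\ref{Pr2} consists precisely of invoking the Gy\"ongy--Krylov lemma \cite{Gkrylov} together with the uniqueness of the limiting strong solution, deferring to \cite[Section 6]{FeiLuk2023_SCNS} for exactly the pairing/diagonal bookkeeping you spell out (applying Proposition~\ref{Pr1} with $M=2$ to pairs of subsequences and using uniqueness to collapse the limit law onto the diagonal). The only caveat is that your pair $\big(D_{n_k},D_{m_k}\big)$ carries two different mesh parameters $h_{n_k}\neq h_{m_k}$, which the literal statement of Proposition~\ref{Pr1} (one common $h_n$ for all components) does not cover; this is harmless, since the Skorokhod-based proof of Proposition~\ref{Pr1} passes to the limit pathwise and componentwise, so it extends verbatim to that situation.
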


\section{Applications to numerical approximations}
\label{MC}

We apply the abstract results stated
in Propositions \ref{Pr1} and \ref{Pr2} to
specific problems of convergence of numerical approximations of systems of partial differential equations with random data. We focus
on the iconic examples of the strong and weak approach: the stochastic collocation and the Monte Carlo method, respectively.

\subsection{Stochastic collocation method}

Proposition \ref{Pr2} can be directly used for showing convergence of a stochastic collocation or similar methods when the data are strongly approximated by random variables with finitely many values.

Suppose we are given random data
\[
\omega \in \Omega \mapsto D(\omega) \in X_D,
\]
where the probability space $\Omega$ is a \emph{compact metric space}. This property is crucial for building suitable and unconditionally convergent approximations of random data.

\subsubsection{Piecewise approximation of the data}

Let
\[
\Omega = \cup_{j = 1}^{\nu(n)} {\Omega}_{n,j},\ \ \Omega_{n,j} \in \mathcal{B},\  \
\Omega_{n,i} \cap \Omega_{n,j} = \emptyset\ \mbox{ for }\ i \ne j
\]
be a decomposition of $\Omega$ into a finite number of Borel subsets. We set
\begin{equation} \label{MC1}
\omega \in \Omega \mapsto	D_n(\omega) = \sum_{j=1}^{\nu(n)} (\vr, \vt, \vu, \vc{g} )(\omega_{n, j}) \mathds{1}_{\Omega_{n,j}}(\omega) \in X_D,
	\end{equation}
where $\omega_{n,j} \in \Omega_{n,j}$ are arbitrarily chosen collocation points. Finally, we suppose
\begin{equation} \label{MC2}
	\lim_{n \to \infty} \sup_{j=1,\dots, \nu(n) } {\rm diam}[\Omega_{n,j}] = 0.
	\end{equation}

\begin{Proposition} [{{\bf Data approximation}}] \label{PMC1}
	
	Suppose that the data $\omega \to D = D(\omega)$ are bounded and Riemann integrable, meaning
	\[
	\sup_{\omega \in \Omega} \| D(\omega) \|_{X_D} < \infty, \
	\mathcal{P} \Big\{ \omega \ \mbox{is point of continuity of}\ D \Big\} = 1.
	\]
	Let $\{ \Omega_{n,j} \}_{n=1, j =1}^{\infty, \nu(n)}$ be a sequence of partitions of $\Omega$ satisfying
	\eqref{MC2}, and let $\omega_{n,j} \in \Omega_{n,j}$ be a family of arbitrary collocation points.
	
	Then
	\[
	D_n = \sum_{j=1}^{\nu(n)} (\vr, \vt, \vu, \vc{g} )(\omega_{n, j}) \mathds{1}_{\Omega_{n,j}} \
	 \to D \ \mbox{in}\ X_D \ \mbox{in probability.}
	\]
	
	\end{Proposition}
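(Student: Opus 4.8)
The plan is to prove the stronger statement that $D_n \to D$ in $X_D$ $\prst$-almost surely and then to deduce convergence in probability, which is automatic since almost-sure convergence always implies convergence in probability on a probability space. The whole argument is pointwise in $\omega$ and rests on combining the shrinking-diameter condition \eqref{MC2} with the continuity of $D$ at $\prst$-almost every point.

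First I would fix $\omega \in \Omega$ and use that the sets $\{\Omega_{n,j}\}_{j}$ partition $\Omega$: for each $n$ there is a unique index $j_n(\omega)$ with $\omega \in \Omega_{n, j_n(\omega)}$, and by \eqref{MC1} this gives $D_n(\omega) = D(\omega_{n, j_n(\omega)})$. Since the collocation point $\omega_{n, j_n(\omega)}$ and the point $\omega$ belong to the same cell, their distance in $\Omega$ is at most ${\rm diam}[\Omega_{n, j_n(\omega)}] \le \sup_{j} {\rm diam}[\Omega_{n,j}]$, which tends to zero by \eqref{MC2}. Hence $\omega_{n, j_n(\omega)} \to \omega$ in $\Omega$ for every $\omega$.

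Next I would invoke continuity. At any point $\omega$ at which $D$ is continuous, the convergence $\omega_{n, j_n(\omega)} \to \omega$ in $\Omega$ forces $D(\omega_{n, j_n(\omega)}) \to D(\omega)$ in $X_D$, that is $d_D[D_n(\omega), D(\omega)] \to 0$. By the Riemann-integrability hypothesis the set of continuity points of $D$ carries probability one, so $d_D[D_n, D] \to 0$ holds $\prst$-almost surely. To convert this into convergence in probability, fix $\delta > 0$ and put $A_n = \{ \omega : d_D[D_n(\omega), D(\omega)] > \delta \}$; each $A_n$ is measurable because $D_n$ and $D$ are Borel measurable into $X_D$, and $\ind_{A_n} \to 0$ almost surely by the previous step, whence the bounded convergence theorem yields $\prst(A_n) = \stred[\ind_{A_n}] \to 0$. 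As $\delta > 0$ was arbitrary, this is precisely the asserted convergence in probability.

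There is no serious analytic obstacle here; the boundedness hypothesis $\sup_{\omega} \| D(\omega) \|_{X_D} < \infty$ only guarantees that every $D_n$ takes values in $X_D$ with a uniform bound---information that will matter for the boundedness-in-probability requirement \eqref{boundBC} downstream, but which plays no role in the present limit. The one point that genuinely needs care is the a.e.-pointwise step: one must correctly identify the cell containing $\omega$, check that the associated collocation point converges to $\omega$, and only then invoke continuity of $D$, since continuity may fail on the $\prst$-null set excluded by Riemann integrability.
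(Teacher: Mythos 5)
Your proof is correct, but it takes a genuinely different and more elementary route than the paper. The paper never argues pointwise in $\omega$: it tests $D_n$ against an arbitrary bounded continuous function $F$ on $X_D$, invokes Taylor's result \cite{Taylor} on Riemann integrable maps on a compact measured metric space to get $\stred\left[ F(D_n) \right] \to \stred\left[ F(D) \right]$ (i.e.\ convergence in law), upgrades this to $\stred\left[ |F(D_n) - F(D)|^q \right] \to 0$ by citing \cite[Proposition 3.1]{FeiLuk2023_SCNS}, and only then concludes convergence in probability from the arbitrariness of $F$ and the fact that $X_D$ is Polish. Your argument instead localizes: $D_n(\omega) = D(\omega_{n,j_n(\omega)})$ with the collocation point lying in the same cell as $\omega$, so \eqref{MC2} forces $\omega_{n,j_n(\omega)} \to \omega$, and $\prst$-a.e.\ continuity of $D$ gives $d_D[D_n,D] \to 0$ almost surely, hence in probability; your measurability check for the sets $A_n$ is sound since $D_n$ is a simple Borel function and $d_D$ is continuous. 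What your route buys: it is self-contained (no Riemann-integration machinery, no appeal to the earlier paper), it delivers the strictly stronger conclusion of almost sure convergence, and it shows that neither the uniform bound on $D$ nor compactness of $\Omega$ enters this particular limit --- only the partition structure, \eqref{MC2}, and a.e.\ continuity. What the paper's route buys: it stays within the test-function framework of \cite{FeiLuk2023_SCNS}, producing along the way the moment-type statements $\stred\left[ |F(D_n) - F(D)|^q \right] \to 0$ that mesh with the law-convergence arguments used elsewhere in the paper, whereas your a.s.\ statement would still need the uniform bound (as in Remark \ref{RMC2}) to recover such expectation estimates. Both proofs use the stated hypotheses correctly; yours simply uses them more directly.
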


\begin{Remark} \label{RMC1}
	The proof uses only the fact that $X_D$ is a Polish space and thus the result may be of independent interest.
	\end{Remark}

\begin{proof}
	
	Let $F$ be a bounded and continuous (BC) function in  $X_D$. As shown by Taylor \cite{Taylor}, the mapping $\omega \in \Omega \mapsto F(D(\omega))$ is Riemann integrable, in particular Borel measurable, and
	\[
	\expe{ F(D_n) } \to \expe{ F(D) } \mbox{ as }  n \to \infty,
	\]
	which yields convergence in law,
	\[
	\mathcal{L}[D_n] \to \mathcal{L}[D].
	\]
	Actually, as observed in \cite[Proposition 3.1]{FeiLuk2023_SCNS}, the convergence is stronger, namely,
	\[
	\expe{ |F(D_n) - F(D) |^q } \to 0 \ \mbox{for any}\ 1 \leq q < \infty.
	\]
	As $F$ is an arbitrary bounded continuous function and $X_D$ a Polish space, the desired conclusion
	follows.

	\end{proof}

\begin{Remark} \label{RMC2}
	
	As $\| D_n (\omega) \|_{X_D}$ is bounded, we deduce
	\[
	\mathbb{E} \Big[ \left\| D_n - D \right\|_{X_D}^q \Big] \to 0  \mbox{ as }  n \to \infty \mbox{ for any } 1 \leq q < \infty.
	\]
	
	\end{Remark}

Combining Proposition \ref{Pr2} with Proposition \ref{PMC1} we obtain the following result on convergence of the
stochastic collocation numerical method.

\begin{Theorem}[{{\bf Convergence of stochastic collocation numerical method}}]\label{TMC1}

\noindent
	Let $\{ \Omega, \mathcal{B}, \prst \}$ be a probabilistic basis, where $\Omega$ is a compact metric space. Suppose the data are independent of time and $\vuB = 0$. Let  the mapping $D: \Omega \mapsto X_D$ be bounded and Riemann integrable in the sense specified
	in Proposition \ref{PMC1}. Let $\{ \Omega_{n,j} \}_{n=1, j =1}^{\infty, \nu(n)}$ be a sequence of partitions of $\Omega$ satisfying
	\eqref{MC2}, and let $\omega_{n,j} \in \Omega_{n,j}$ be a family of arbitrary collocation points. Consider the data approximation
	\[
	D_n = \sum_{j=1}^{\nu(n)} D(\omega_{n,j}) \mathds{1}_{\Omega_{n,j}}.
	\]
	Suppose a sequence of approximate solutions
	\[
	(\vr, \vt, \vu)_{h_n}[D_n],\ h_n \to 0
	\]
	generated by a bounded consistent numerical method is
	bounded in probability.

	Then
	\[
	(\vr, \vt, \vu)_{h_{n}}[D_{n}]
	\to (\vr, \vt, \vu)[D] \ \mbox{in}\ L^q((0, T) \times Q; \R^{d+2}) \ \mbox{in probability as } h_n \to 0
	\]
	for any $1 \leq q < \infty$.
		\end{Theorem}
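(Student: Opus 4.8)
The plan is to read this off directly as a synthesis of the data-approximation result of Proposition \ref{PMC1} and the abstract convergence-in-probability statement of Proposition \ref{Pr2}. No fresh analysis of the Navier--Stokes--Fourier system is needed here: all of the genuine PDE content is already packaged inside Theorem \ref{CT1}, and hence inside Proposition \ref{Pr2}. The task is therefore to verify that the hypotheses of the two propositions are met and then to chain them.

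First I would apply Proposition \ref{PMC1}. Its hypotheses hold verbatim: the map $D:\Omega \to X_D$ is bounded and Riemann integrable by assumption, the partitions $\{\Omega_{n,j}\}$ satisfy the mesh-vanishing condition \eqref{MC2}, and the collocation points $\omega_{n,j}\in\Omega_{n,j}$ are arbitrary. Each $D_n=\sum_{j=1}^{\nu(n)} D(\omega_{n,j})\mathds{1}_{\Omega_{n,j}}$ is a simple $X_D$-valued function, hence Borel measurable with $D_n(\omega)\in X_D$ for every $\omega$, so the sequence is admissible. Proposition \ref{PMC1} then gives
\[
D_n \to D \quad \mbox{in } X_D \mbox{ in probability as } n\to\infty.
\]

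Next I would check the hypotheses of Proposition \ref{Pr2} with $M=1$. The numerical method is bounded consistent and the approximate solutions $(\vr,\vt,\vu)_{h_n}[D_n]$ are bounded in probability by assumption, $h_n\to 0$, and the convergence $D_n\to D$ in probability that Proposition \ref{Pr2} additionally requires is precisely the output of the previous step. The only item left to establish is the tightness of the laws $(\mathcal{L}[D_n])_{n=1}^\infty$ in $\mathfrak{P}[X_D]$ demanded by Proposition \ref{Pr1}. This comes for free: convergence in probability implies convergence in law, so $\mathcal{L}[D_n]\to\mathcal{L}[D]$ narrowly, and on the Polish space $X_D$ a narrowly convergent sequence of probability measures is tight by Prokhorov's theorem. (A direct argument via the finite supports of the $D_n$ is less convenient, since boundedness of $D$ does not yield relative compactness of $\{D(\omega_{n,j})\}$ in the infinite-dimensional $X_D$, so the convergence-in-law route is the clean one.)

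With every hypothesis verified, Proposition \ref{Pr2} applies and delivers
\[
(\vr,\vt,\vu)_{h_n}[D_n] \to (\vr,\vt,\vu)[D] \quad \mbox{in } L^q((0,T)\times Q;\R^{d+2}) \mbox{ in probability}
\]
for every $1\leq q<\infty$, which is the assertion. I expect no real obstacle in this theorem itself; it is essentially a bookkeeping combination, with all the difficulty located upstream — the deterministic passage to the unique strong solution (Theorem \ref{CT1}, resting on weak--strong uniqueness and the conditional regularity of Corollary \ref{Cs1}), the Skorokhod and Gy\"ongy--Krylov machinery behind Proposition \ref{Pr2}, and the Riemann-integrability argument underlying Proposition \ref{PMC1}. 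The one spot deserving a moment's care is the tightness verification described above.
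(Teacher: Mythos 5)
Your proposal is correct and follows exactly the paper's route: the paper proves Theorem \ref{TMC1} simply by combining Proposition \ref{PMC1} (which gives $D_n \to D$ in $X_D$ in probability) with Proposition \ref{Pr2}, precisely as you do. Your explicit verification of the tightness hypothesis via narrow convergence and Prokhorov's theorem is a detail the paper leaves implicit, and it is handled correctly.
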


\subsection{Monte Carlo method}

Consider random data
\[
\omega \in \Omega \mapsto D \in X_D
\]
together with a sample of $(D^m)_{m=1}^M$ of i.i.d. copies of
$D$. We distinguish two case. First, we apply an approximation method directly to the samples $D^1, \dots, D^M$. Next, we also approximate the random data by samples with only finite values.

\subsubsection{Convergence in the case of exact (continuous) samples}

We start with the simplest possible case when a numerical method acts directly on the samples $D^1, \dots, D^M$. In addition, we first consider numerical approximations normalized by a suitable cut--off function.

\begin{Theorem}[{{\bf Convergence of Monte Carlo method, I}}] \label{TMC4}
	Let $\{ \Omega, \mathcal{B}, \prst \}$ be a probabilistic basis.
	Suppose $\vuB = 0$ and consider	random data that are independent of time
	\[
	\omega \in \Omega \mapsto D(\omega) \in X_D.
	\]
	Let $(D^m)_{m=1}^M$ be a sequence of i.i.d. copies of the
	random variable $D$.
	Suppose $(\vr, \vt, \vu)_{h_n}[D]$ is a sequence of approximate solutions generated by a bounded consistent method such that
	\begin{equation} \label{hhyp1}
	(\vr, \vt, \vu)_{h_n}[D] \ \mbox{is bounded in probability}
	\ \mbox{for}\ h_n \to 0.
	\end{equation}
	
	Then
	\[
		\lim_{M \to \infty, h_n \to 0}
		\expe{ \norm{\frac1M \sum_{m=1}^M B(\vr, \vt, \vu)_{h_n}[D^{m}]
				- \expe{B(\vr, \vt, \vu)[D]}}_{L^q((0, T) \times Q)} } = 0,
	\]
	for any $B \in BC(\R^{d+2})$ and any $1 \leq q < \infty$.
	\end{Theorem}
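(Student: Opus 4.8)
The plan is to insert the exact random strong solutions $(\vr,\vt,\vu)[D^m]$ and split the total error into a \emph{numerical} contribution, controlled by the deterministic convergence theorem in its probabilistic form (Proposition~\ref{Pr2}), and a \emph{statistical} Monte Carlo contribution, controlled by the Strong Law of Large Numbers. Before decomposing, note that applying Proposition~\ref{Pr2} to the constant sequence $D_n\equiv D$ (whose law is automatically tight on the Polish space $X_D$) is legitimate under \eqref{hhyp1}: it both certifies that the strong solution $(\vr,\vt,\vu)[D]$ is defined $\prst$-a.s. as an element of $L^q((0,T)\times Q;\R^{d+2})$ and gives
\[
(\vr,\vt,\vu)_{h_n}[D]\to(\vr,\vt,\vu)[D]\quad\text{in } L^q((0,T)\times Q;\R^{d+2})\ \text{in probability as } h_n\to 0.
\]
Since $B\in BC(\R^{d+2})$ and the $D^m$ are measurable, each $B(\vr,\vt,\vu)[D^m]$ is an $L^q((0,T)\times Q)$-valued random variable bounded by $\norm{B}_{L^\infty}\abs{(0,T)\times Q}^{1/q}$. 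The triangle inequality then yields
\[
\expe{\norm{\frac{1}{M}\sum_{m=1}^M B(\vr,\vt,\vu)_{h_n}[D^m]-\expe{B(\vr,\vt,\vu)[D]}}_{L^q}}\le \mathrm{I}_{h_n}+\mathrm{II}_M,
\]
with $\mathrm{I}_{h_n}=\expe{\norm{\frac{1}{M}\sum_{m=1}^M\big(B(\vr,\vt,\vu)_{h_n}[D^m]-B(\vr,\vt,\vu)[D^m]\big)}_{L^q}}$ and $\mathrm{II}_M=\expe{\norm{\frac{1}{M}\sum_{m=1}^M B(\vr,\vt,\vu)[D^m]-\expe{B(\vr,\vt,\vu)[D]}}_{L^q}}$.

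For the numerical term I would move the norm inside the average and exploit that the $D^m$ are identically distributed copies of $D$; as the maps $D\mapsto(\vr,\vt,\vu)_{h_n}[D]$ and $D\mapsto(\vr,\vt,\vu)[D]$ are deterministic and measurable, every summand has the same expectation, so
\[
\mathrm{I}_{h_n}\le\expe{\norm{B(\vr,\vt,\vu)_{h_n}[D]-B(\vr,\vt,\vu)[D]}_{L^q}},
\]
independently of $M$. This tends to $0$ as $h_n\to 0$: by the in-probability convergence above and the continuity of the superposition operator $f\mapsto B(f)$ from $L^q((0,T)\times Q;\R^{d+2})$ to $L^q((0,T)\times Q)$ (valid since $B$ is continuous and bounded and $(0,T)\times Q$ has finite measure, proved by passing through a.e.-convergent subsequences and dominated convergence), one gets $\norm{B(\vr,\vt,\vu)_{h_n}[D]-B(\vr,\vt,\vu)[D]}_{L^q}\to0$ in probability; as this random variable is bounded by $2\norm{B}_{L^\infty}\abs{(0,T)\times Q}^{1/q}$, bounded convergence upgrades the convergence to $L^1(\Omega)$, i.e. $\mathrm{I}_{h_n}\to0$.

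For the statistical term I would invoke the Strong Law of Large Numbers in the separable Banach space $L^q((0,T)\times Q)$: the variables $B(\vr,\vt,\vu)[D^m]$ are i.i.d. copies of the Bochner-integrable $B(\vr,\vt,\vu)[D]$, so Mourier's law gives $\frac{1}{M}\sum_{m=1}^M B(\vr,\vt,\vu)[D^m]\to\expe{B(\vr,\vt,\vu)[D]}$ $\prst$-a.s. in $L^q$; the same uniform bound and dominated convergence then give $\mathrm{II}_M\to0$ as $M\to\infty$. Since $\mathrm{I}_{h_n}$ depends only on $h_n$ and $\mathrm{II}_M$ only on $M$, the joint limit $\lim_{M\to\infty,\,h_n\to0}$ of the left-hand side vanishes, which is the claim. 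The step I expect to be most delicate is not the analysis but the measure-theoretic bookkeeping underpinning the decomposition: namely, confirming under the bare boundedness-in-probability hypothesis \eqref{hhyp1} that the random strong solution is $\prst$-a.s. defined on the whole interval $[0,T]$ (so that $\expe{B(\vr,\vt,\vu)[D]}$ is meaningful) and that $D\mapsto(\vr,\vt,\vu)[D]$ is Borel measurable into $L^q$. Once these are in place, the continuity of $B$ under $L^q$-convergence in probability, the Banach-valued SLLN, and the dominated-convergence upgrades are routine.
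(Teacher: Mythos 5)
Your proposal is correct and takes essentially the same approach as the paper's proof: the identical triangle-inequality splitting into a numerical error (reduced via the i.i.d.\ property to a single expectation $\expe{\| B(\vr,\vt,\vu)_{h_n}[D]-B(\vr,\vt,\vu)[D]\|_{L^q}}$, which vanishes by Proposition~\ref{Pr2} and boundedness) and a statistical error handled by the Strong Law of Large Numbers in separable Banach spaces with a dominated-convergence upgrade. The only distinction is expository: you make explicit the Nemytskii-operator continuity and the measure-theoretic points (a.s.\ existence of the strong solution on $[0,T]$ and measurability of $D\mapsto(\vr,\vt,\vu)[D]$) that the paper treats implicitly through \eqref{hhyp2}.
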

	
	\begin{proof}
		First observe that hypothesis \eqref{hhyp1} together with
		Proposition \ref{Pr2} yield
		\begin{equation} \label{hhyp2}
			(\vr, \vt, \vu)_{h_n}[D^m] \to
			(\vr, \vt, \vu)[D^m] \mbox{ as } h_n \to 0 \mbox{ for any } m = 1,\dots, M
		\end{equation}
in $L^q((0,T) \times Q;\R^{d+2})$ in probability.
 		
		Write
		\begin{align}
			&\norm{\frac1M \sum_{m=1}^M B(\vr, \vt, \vu)_{h_n}[D^{m}]
				- \expe{B(\vr, \vt, \vu)[D]}}_{L^q((0, T) \times Q)}
		\br &\quad		\leq \norm{\frac1M \sum_{m=1}^M \big( B(\vr, \vt, \vu)_{h_n}[D^{m}]
			- B(\vr, \vt, \vu)[D^{m}] \big) }_{L^q((0, T) \times Q)} \br
			&\quad + \norm{\frac1M \sum_{m=1}^M B(\vr, \vt, \vu)[D^{m}]
				- \expe{B(\vr, \vt, \vu)[D]}}_{L^q((0, T) \times Q)}.
\nonumber			
	\end{align}
By virtue of a variant of the Strong law of large numbers
in separable Banach spaces \cite[Corollary 7.10]{LedTal}, we get
\[
\norm{\frac1M \sum_{m=1}^M B(\vr, \vt, \vu)[D^{m}]
	- \expe{B(\vr, \vt, \vu)[D]}}_{L^q((0, T) \times Q)} \to 0  \mbox{ as } M \to \infty
\]	
$\prst-$a.s. As all quantities are bounded, we get 	
\[
\expe{ \norm{\frac1M \sum_{m=1}^M B(\vr, \vt, \vu)[D^{m}]
	- \expe{B(\vr, \vt, \vu)[D]}}_{L^q((0, T) \times Q)} } \to 0.
\]

Further,
\begin{align}
&\expe{	\norm{\frac1M \sum_{m=1}^M \big( B(\vr, \vt, \vu)_{h_n}[D^{m}]
		- B(\vr, \vt, \vu)[D^{m}] \big) }_{L^q((0, T) \times Q)}  } \br
	&\quad \leq \frac{1}{M} \sum_{m=1}^M \expe{ \Big\| B(\vr, \vt, \vu)_{h_n}[D^{m}]
		- B(\vr, \vt, \vu)[D^{m}] \Big\|_{L^q((0, T) \times Q)}  } \br &\quad = \expe{ \Big\| B(\vr, \vt, \vu)_{h_n}[D]
		- B(\vr, \vt, \vu)[D] \Big\|_{L^q((0, T) \times Q)}  }.
	\nonumber
	\end{align}			
Finally, in view of \eqref{hhyp2} and the fact that all quantities are bounded, we conclude the proof by
\[
\expe{ \Big\| B(\vr, \vt, \vu)_{h_n}[D]
	- B(\vr, \vt, \vu)[D] \Big\|_{L^q((0, T) \times Q)}  } \to 0
	\ \mbox{as}\ h_n \to 0.
\]		
		\end{proof}
		
Removing the cut--off function $B$ faces the fundamental difficulty. There is no (obvious) {\it a priori} bound on the strong solution in terms of the data. Consequently, in addition to boundedness of probability of the approximate sequence, another ad hoc hypothesis must be imposed.

\begin{Theorem}[{{\bf Convergence of Monte Carlo method, II}}] \label{TMC5}
	Let $\{ \Omega, \mathcal{B}, \prst \}$ be a probabilistic basis.
	Suppose $\vuB = 0$ and consider	random data  that are independent of time
	\begin{equation} \label{hhyp3a}
	\omega \in \Omega \mapsto D(\omega) \in X_D, \
	\| D \|_{X_D} \leq \Ov{D},
	\end{equation}
	where $\Ov{D}$ is a deterministic constant.
	Let $(D^m)_{m=1}^M$ be a sequence of i.i.d. copies of the
	random variable $D$.
	Suppose $(\vr, \vt, \vu)_{h_n}[D]$ is a sequence of approximate solution generated by a bounded consistent method such that
	\begin{equation} \label{hhyp3}
		(\vr, \vt, \vu)_{h_n}[D] \ \mbox{is bounded in probability}
		\ \mbox{for}\ h_n \to 0.
	\end{equation}
In addition, let at least one of the following conditions be satisfied:
\begin{itemize}
	\item
	The boundary temperature $\vtB$ is constant.
	
		\item
	\begin{equation} \label{hyp2}
		s(\vr_{h_n}, \vt_{h_n})(t,x)\geq \underline{s} \ \mbox{for a.a.} \ (t,x)
		\ \mbox{uniformly for}\ h_n \to 0,
	\end{equation}
	where $\underline{s}$ is a deterministic constant.
	\item
	The bulk viscosity coefficient $\eta > 0$, and
	\begin{equation} \label{hyp1}
		\vr_{h_n}(t,x) \leq \Ov{\vr} \ \mbox{for a.a.} \ (t,x)
		\ \mbox{uniformly for}\ h_n \to 0,
	\end{equation}
	where $\Ov{\vr}$ is a deterministic constant;

	\item
	\begin{equation} \label{hyp3}
		| \vu_{h_n}(t,x)| \leq \Ov{u} \ \mbox{for a.a.} \ (t,x)
		\ \mbox{uniformly for}\ h_n \to 0,
	\end{equation}
	where $\Ov{u}$ is a deterministic constant.
	
\end{itemize}

	Then
	\[
	\lim_{M \to \infty} \lim_{h_n \to 0}
	\norm{\frac1M \sum_{m=1}^M (\vr, S, \vm)_{h_n}[D^{m}]
			- \expe{(\vr, S, \vm)[D]}}_{L^1((0, T) \times Q)}  = 0
	\]
	in probability, where we have set $S = \vr s(\vr, \vt)$, $\vm = \vr \vu$.
\end{Theorem}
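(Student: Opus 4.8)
The plan is to follow the two--term decomposition used in the proof of Theorem~\ref{TMC4}, the essential new point being that the bounded cut--off $B$ is removed, so that genuine integrability of the \emph{unbounded} conservative quantities $\vr$, $S=\vr s(\vr,\vt)$ and $\vm=\vr\vu$ must be extracted. Writing $\mathbf{U}=(\vr,S,\vm)$ and $\mathbf{U}_{h_n}=(\vr,S,\vm)_{h_n}$, I would split
\begin{align}
\norm{\tfrac1M\sum_{m=1}^M\mathbf{U}_{h_n}[D^m]-\expe{\mathbf{U}[D]}}_{L^1}
&\leq \norm{\tfrac1M\sum_{m=1}^M\big(\mathbf{U}_{h_n}[D^m]-\mathbf{U}[D^m]\big)}_{L^1} \br
&\quad + \norm{\tfrac1M\sum_{m=1}^M\mathbf{U}[D^m]-\expe{\mathbf{U}[D]}}_{L^1}, \nonumber
\end{align}
and treat the inner limit $h_n\to0$ on the first (numerical) term and the outer limit $M\to\infty$ on the second (statistical) term.

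For the statistical term I would invoke the Strong law of large numbers in the separable Banach space $L^1((0,T)\times Q;\R^{d+2})$ \cite[Corollary 7.10]{LedTal}, applied to the i.i.d.\ family $\mathbf{U}[D^m]$; this yields a.s.\ (hence in probability) convergence to $\expe{\mathbf{U}[D]}$ provided the single bound $\expe{\norm{\mathbf{U}[D]}_{L^1}}<\infty$ holds. The crucial step is therefore to produce a \emph{deterministic} a priori estimate
\[
\norm{\mathbf{U}[D]}_{L^1((0,T)\times Q)}\leq c(\Ov{D})
\]
depending only on the data bound \eqref{hhyp3a}. Here I would use the discrete ballistic energy inequality \eqref{cf6}: combined with conservation of mass it controls, uniformly in $h_n$ and in $\omega$, the quantities $\intTOf{\vr}$, $\intTOf{\vr|\vu|^2}$ (hence $\intTOf{|\vm|}$ by the Cauchy--Schwarz inequality) and, after closing the estimate, $\intTOf{|\vr s|}$. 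Passing $h_n\to0$ through Theorem~\ref{CT1} on the set where the approximation is bounded, and using boundedness in probability \eqref{hhyp3} together with Corollary~\ref{Cs1} to guarantee $T_{\rm max}[D]>T$ there, the strong solution inherits the same bound, whence $\expe{\norm{\mathbf{U}[D]}_{L^1}}\leq c(\Ov{D})$.

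For the numerical term I would use Proposition~\ref{Pr2}: boundedness in probability gives $(\vr,\vt,\vu)_{h_n}[D^m]\to(\vr,\vt,\vu)[D^m]$ in every $L^q$ in probability, and on the high--probability set where $\tfrac1\Lambda\leq\vr_{h_n},\vt_{h_n}\leq\Lambda$, $|\vu_{h_n}|\leq\Lambda$ hold, the maps $(\vr,\vt,\vu)\mapsto(\vr,\vr s,\vr\vu)$ are Lipschitz; hence $\mathbf{U}_{h_n}[D^m]\to\mathbf{U}[D^m]$ in $L^1$ in probability. As this is a finite sum over $m=1,\dots,M$, the first term tends to $0$ in probability as $h_n\to0$ for each fixed $M$, and the iterated limit then closes, the whole convergence being in probability.

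The main obstacle is the deterministic $L^1$ bound on $S=\vr s$. With $s=c_v\log\vt-\log\vr$ one has the easy upper bound $(\vr s)^+\leq c_v\vr\vt+1/e$, but the ballistic energy only yields $-\Theta\vr s\leq$ (bounded energy), so controlling $(\vr s)^-$ and $\intTOf{\vr\vt}$ leads to a circular estimate whenever the admissible $\Theta$ (which must satisfy $\Theta|_{\partial Q}=\vtB$) is non--constant and large. Each of the four alternative hypotheses is designed to break this loop: if $\vtB$ is constant one may take $\Theta$ constant, so that the forcing term $\intTOf{\vr s\,\vu\cdot\Grad\Theta}$ disappears and the Helmholtz free energy $c_v\vr\vt-\Theta\vr s$ becomes coercive in $(\vr,\vt)$, directly bounding $\intTOf{|\vr s|}$; the lower bound \eqref{hyp2} on the entropy controls $(\vr s)^-$ by the mass $\vr$; the combination $\eta>0$ with the density bound \eqref{hyp1} supplies the full symmetric--gradient dissipation and lets the pressure work $\vr\vt\Div\vu$ be absorbed; and the velocity bound \eqref{hyp3} turns $\intTOf{\vr s\,\vu\cdot\Grad\Theta}$ into a term linear in $\vr|s|$, so that a Gr\"onwall argument applied to the ballistic energy closes the estimate. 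Verifying this closure in each case, uniformly in $h_n$ and integrably in $\omega$, is where the real work lies.
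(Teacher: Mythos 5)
Your skeleton coincides with the paper's own proof: the same two--term decomposition into a numerical and a statistical error, Proposition~\ref{Pr2} for the numerical term with the inner limit $h_n \to 0$ taken first for each fixed $M$, the Strong law of large numbers in the separable Banach space $L^1((0,T)\times Q;\R^{d+2})$ \cite[Corollary 7.10]{LedTal} for the statistical term, and the reduction of the whole theorem to the deterministic moment bound $\expe{\| (\vr, S, \vm)[D] \|_{L^1((0,T)\times Q;\R^{d+2})}} < \infty$. One minor deviation: you propose to run the energy estimate at the discrete level via \eqref{cf6} and pass to the limit, whereas the paper observes that the hypotheses \eqref{hyp1}--\eqref{hyp3} are inherited by the limit strong solution and performs the estimate directly on the continuous ballistic energy inequality \eqref{BIn}, which avoids wrestling with consistency errors.

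The genuine gap lies in the case analysis that you defer as ``where the real work lies'': that closure \emph{is} the content of the theorem, and for two of the four hypotheses the mechanism you sketch is not the one that works. Under \eqref{hyp2}, ``controlling $(\vr s)^-$ by the mass'' does not break the circularity you yourself identified: the positive part satisfies only $(\vr s)^+ \aleq \vr \vt + 1$, i.e.\ it is controlled by the internal energy, which is exactly what the ballistic energy estimate is supposed to produce, and the obstruction is the transport term $\intQ{\vr s \vu \cdot \Grad \Theta}$. The paper's actual use of the entropy lower bound is a \emph{coercivity} statement: $s \geq \underline{s}$ gives $\vt^{c_v} \geq e^{\underline{s}} \vr$, hence $1 + \vr e(\vr,\vt) \ageq \vr^\gamma + \vr |\log \vt|^2$ with $\gamma = 1 + 1/c_v$, so that $\vr |s|^2 \aleq 1 + \vr e(\vr, \vt)$; then $|\vr s \vu \cdot \Grad \Theta| \leq \frac{1}{2}\left( \vr |s|^2 + \vr |\vu|^2 \right) |\Grad \Theta|$ is dominated by the ballistic energy itself and Gronwall closes. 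Under \eqref{hyp1}, absorbing the pressure work into the bulk--viscosity dissipation is only the first step, and it takes place in the \emph{temperature equation}, not in the energy balance: dividing the internal energy balance by $\vr \leq \Ov{\vr}$ and using $\frac{\eta}{\Ov{\vr}} |\Div \vu|^2 - \vt \Div \vu \geq - \frac{\Ov{\vr}}{4 \eta} \vt^2$, the parabolic minimum principle together with the deterministic lower bounds on the initial/boundary temperature coming from \eqref{hhyp3a} yields a deterministic bound $\vt[D] \geq \underline{\vt} > 0$; combined with $\vr \leq \Ov{\vr}$ this produces precisely the entropy lower bound of \eqref{hyp2}, to which the case is reduced. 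Without these two ingredients --- the coercivity $\vr|s|^2 \aleq 1 + \vr e$ and the minimum--principle temperature bound --- your Gronwall loop does not close, so the proposal as written does not prove the theorem under \eqref{hyp1} or \eqref{hyp2}. Your treatment of the constant--$\vtB$ case and of \eqref{hyp3}, by contrast, matches the paper's argument in substance (the paper phrases the latter through the relative energy and formula \eqref{form}, but the mechanism --- ballistic energy plus conserved mass dominates $\vr|s|$ and $|\vm|$, and the forcing is linear in these quantities --- is the same).
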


\begin{proof}
Similarly to the proof of Theorem \ref{TMC4}, we may apply Proposition \ref{Pr2}	 to obtain
	\begin{equation} \label{hhyp5}
 		(\vr, S, \vm)_{h_n}[D^m] \to
		(\vr, S, \vm)[D^m] \mbox{ as } h_n \to 0 \mbox{ for any}\ m = 1,\dots, M
	\end{equation}
	in $L^q((0,T) \times Q;\R^{d+2})$ in probability. Write
	\begin{align}
	&\norm{\frac1M \sum_{m=1}^M (\vr, S, \vm)_{h_n}[D^{m}]
		- \expe{(\vr, S, \vm)[D]}}_{L^1((0, T) \times Q; \R^{d+2})} \br
		&\quad \leq \norm{\frac1M \sum_{m=1}^M  \big(  (\vr, S, \vm)_{h_n}[D^{m}]
			- (\vr, S, \vm)[D^{m}] \big)  }_{L^1((0, T) \times Q; \R^{d+2})} \br
			&\quad + \norm{\frac1M \sum_{m=1}^M (\vr, S, \vm)[D^{m}]
				- \expe{(\vr, S, \vm)[D]}}_{L^1((0, T) \times Q; \R^{d+2})}.
		\nonumber
\end{align}	
In view of \eqref{hhyp5}, we get
\[
\lim_{M \to \infty} \lim_{h_n \to 0}\norm{\frac1M \sum_{m=1}^M \big( (\vr, S, \vm)_{h_n}[D^{m}]
	- (\vr, S, \vm)[D^{m}]   \big)  }_{L^1((0, T) \times Q; \R^{d+2})} = 0
\]
in probability, where the order of limits is essential.

Consequently, it remains to handle the statistical error
\[
\norm{\frac1M \sum_{m=1}^M (\vr, S, \vm)[D^{m}]
	- \expe{(\vr, S, \vm)[D]}}_{L^1((0, T) \times Q; \R^{d+2})}.
\]
Evoking once more the variant of the Strong law of large numbers
in separable Banach spaces \cite[Corollary 7.10]{LedTal}, we have to
make sure that
\begin{equation} \label{km1}
\expe{ \| (\vr, S, \vm) [D] \|_{L^1((0,T) \times Q; \R^{d+2})}  }	< \infty.
	\end{equation}
	
To see \eqref{km1}, we use one of the hypotheses stated in \eqref{hyp2} --  \eqref{hyp3}. The corresponding bounds obviously hold
for the limit strong solution $(\vr, S, \vm)$.

First observe that
\begin{equation}\label{km2a}
	\vr[D] \leq \Ov{\vr}
\end{equation}
together with the deterministic bound \eqref{hhyp3a} imposed on the
data imply
\begin{equation} \label{km3}
	\vt[D] \geq \underline{\vt} > 0,
\end{equation}
where $\underline{\vt}$ is a deterministic constant.
To see \eqref{km3}, we rewrite the internal energy balance in the form
\begin{align}
	c_v \partial_t \vt + c_v \vu \cdot \Grad \vt - \frac{1}{\vr}  \kappa \Del \vt \geq \frac{\eta}{\Ov{\vr}} |\Div \vu|^2 - \vt \Div \vu
	%\geq - c(\eta, \Ov{\vr}) \vt^2	.
	 \geq - \frac{ \Ov{\vr}}{4\eta} \vt^2	.
	\nonumber
\end{align}
Consequently, the deterministic lower bound claimed in \eqref{km3}
follows from the standard minimum principle as the initial/boundary
data are deterministically bounded below away from zero.
In particular,
\eqref{km2a} and \eqref{km3} yield
\begin{equation} \label {km4aa}
	s(\vr, \vt)[D] \geq \underline{s},
\end{equation}
where $\underline{s}$ is a deterministic constant.
Obviously,
the lower bound on the entropy \eqref{km4aa} follows also from \eqref{hyp2}.

Assuming \eqref{km4aa}, we evoke the ballistic energy inequality
\eqref{BIn}, with $\Theta$ being the unique solution of the
elliptic problem
\[
\Del \Theta = 0 \ \mbox{in} \ Q, \ \Theta|_{\partial Q} = \vtB.
\]
Consequently, the ballistic energy inequality \eqref{BIn} gives rise to
\begin{align}
	\frac{\D }{\dt} &\intQ{ \left( \frac{1}{2} \vr |\vu|^2 + \vr e(\vr, \vt) -
		\Theta \vr s(\vr, \vt)  \right)} + \intQ{ \frac{\Theta}{\vt}
		\left( \mathbb{S} : \Ds \vu - \frac{\vc{q} \cdot \vt}{\vt}  \right) }
	\br &\aleq  \intQ{ \left( \left[ \vr |\vu|^2  + \vr |s|^2 \right] | \Grad \Theta |  + \left[ \vr + \vr |\vu|^2 \right] |\vc{g}|  \right) } + \int_{\partial Q} |\log (\vtB) | |\Grad \Theta |
	\ \D \sigma_x.
	\label{km5}
\end{align}
Moreover, as \eqref{km4aa} holds, we have
\[
\vt^{c_v} \geq \exp(\underline{s}) \vr,
\]
which yields
\begin{equation} \label{km6}
1 +	\vr e(\vr, \vt) = 1 + c_v \vr \vt \ageq \vr^\gamma + \vr |\log(\vt)|^2,\
	\gamma =  \frac{1}{c_v} + 1.
\end{equation}
 Here and hereafter we write $a \ageq b$ (resp. $a \aleq b$) if $a \geq C b$ (resp. $a \leq C b$) with a positive constant $C$.
Thus a Gronwall type argument applies to \eqref{km5} yielding the deterministic bound on
\[
\intQ{ \left( \frac{1}{2} \vr |\vu|^2 + \vr e(\vr, \vt) \right) } \leq \Ov{E};
\]
whence, in view of \eqref{km6},
\[
\| (\vr,S, \vm ) \|_{L^1((0,T) \times Q;\R^{d+2})} \aleq 1.
\]

Finally, suppose hypothesis \eqref{hyp3} is satisfied. We recall the concept of relative energy introduced in \cite{ChauFei}:
\begin{align}
	E\left(\vr, \vt, \vu \Big| \tvr, \tvt, \tvu \right) = \frac{1}{2}
	\vr |\vu - \tvu |^2 + \vr e - \tvt \vr s - \left(
	e(\tvr, \tvt) - \tvt s(\tvr, \tvt ) + \frac{p(\tvr, \tvt)}{\tvr}\right) \vr + p(\tvr, \tvt).
	\nonumber
\end{align}
It follows that relative energy is a strictly convex function
of $(\vr, S = \vr s(\vr, \vt), \vm = \vr \vu)$. Moreover, the ballistic energy
\begin{equation} \label{form}
\frac{1}{2} \vr |\vu|^2 + \vr e(\vr, \vt) - \Theta \vr s(\vr, \vt) =
E \left( \vr, \vt, \vu \Big| 1 , \Theta, 0 \right)
+ \left(
e(1, \Theta) - \Theta s(1, \Theta ) + p(1, \Theta) \right) \vr -
p(1, \Theta).
\end{equation}
Consequently, the total mass conservation yields
\[
\intQ{ \vr (t, \cdot) } = \intQ{\vr_0 },
\]
where, in accordance with our hypotheses, the right--hand side is bounded by a deterministic constant. Thus we may use the ballistic energy inequality \eqref{BIn}, together with the hypothesis \eqref{hyp3}, to obtain \eqref{km1}. Of course, the above calculations can be directly performed if $\vtB$ is constant, meaning
$\Grad \Theta = 0$.
	\end{proof}
	
\paragraph{Two remarks.} \hspace{1pt}\\
{\bf (i)} 	
It is interesting to note that condition \eqref{hyp2} can be enforced \emph{constitutively}. Indeed, we may replace the
Boyle--Mariotte law \eqref{BML} by a more general constitutive relation
\begin{equation} \label{dd1}
	p(\vr, \vt) = (\gamma - 1) \vr e(\vr, \vt)
	\end{equation}
accompanied by the Gibbs' equation \eqref{i4} and the thermodynamics stability hypothesis
\begin{equation} \label{dd2}
	\frac{\partial p(\vr, \vt)}{\partial \vr} > 0,\ \frac{\partial e(\vr, \vt)}{\partial \vt} > 0.
	\end{equation}	
It is a routine matter to check that the energy and entropy can be written in the form
\begin{align}
\vr e(\vr, \vt) &= \frac{\vt}{\gamma -1} \vt^{\frac{1}{\gamma - 1}} P \left( \frac{\vr} {\vt^{\frac{1}{\gamma - 1}}} \right),\quad P'(Z) > 0, \ Z = \frac{\vr} {\vt^{\frac{1}{\gamma - 1}}}, \br 	
s(\vr, \vt) &= S  \left( \frac{\vr} {\vt^{\frac{1}{\gamma - 1}}} \right),\quad
S'(Z) = - \frac{1}{\gamma - 1} \frac{\gamma P(Z) - P'(Z) Z}{Z^2} < 0
	\label{dd3}
	\end{align}	
for some function $P$. Note carefully that the choice $P(Z) = Z$ gives rise to the standard Boyle--Mariotte law \eqref{BML} with $c_v = \frac{1}{\gamma - 1}$.	
Now we modify $P(Z)$ for large value of $Z$ to comply with the Third law of thermodynamics, specifically, we consider
\begin{equation} \label{dd4}
P(0) = 0, \ P'(Z) > 0 \ \mbox{for all}\ Z \geq 0,\ \lim_{Z \to \infty} S(Z) = 0 .
\end{equation}
In particular, the entropy vanishes for $\vt \to 0$.

Revisiting the proof of Theorem \ref{TMC5}, we consider the problematic integral
\[
\intQ{ \vr s(\vr, \vt) \vu \cdot \Grad \Theta }.
\]
Now, either
\[
Z = \frac{\vr} {\vt^{\frac{1}{\gamma - 1}}} > 1 \ \Rightarrow \ \vr s(\vr, \vt) |\vu|  \leq \vr |\vu| S(1),
\]
or
\[
\vr \leq \vt^{\frac{1}{\gamma - 1}}  \ \Rightarrow \ 1 + \vr e(\vr, \vt) \ageq \vr^{\gamma} + \vr |s(\vr, \vt) |^2,
\]
cf. \eqref{km6}. In both cases, the integral is dominated by the total energy and the arguments of
the proof of Theorem \ref{TMC5} apply without changes.

\medskip

{\bf (ii)} A direct inspection of the proof of Theorem \ref{TMC5} reveals that the hypotheses \eqref{hyp1}--\eqref{hyp3} are needed only to
perform the energy estimates to obtain
\[
\expe{ \left\| (\vr, S, \vm)[D] \right\|_{L^1((0,T) \times Q; \R^{d + 2})} } < \infty.
\]
As the norm $\| \vr [D] \|_{L^1(Q)}$ is controlled directly by the initial data, it is enough to get a bound
\begin{equation} \label{ff}
\expe{ \left\| (S, \vm)[D] \right\|_{L^1((0,T) \times Q; \R^{d + 2})} } < \infty.
\end{equation}
Finally, formula \eqref{form} reveals that \eqref{ff} follows by imposing an extra hypothesis
\[
\expe{ \intQ{\left( \frac{1}{2} \vr_{h_n} |\vu_{h_n}|^2 + \vr_{h_n} e(\vr_{h_n}, \vt_{h_n}) - \Theta \vr_{h_n} s(\vr_{h_n}, \vt_{h_n}) \right)[D] } } \leq \Ov{E}_B.
\]
We have shown the following result.

\begin{Theorem}[{{\bf Convergence of Monte Carlo method, III}}] \label{TMC5a}
	%{\cred Consider  general constitutive relation \eqref{dd1} accompanied with \eqref{i4} and \eqref{dd2}--\eqref{dd4}.}
	Let $\{ \Omega, \mathcal{B}, \prst \}$ be a probabilistic basis.
	Suppose $\vuB = 0$ and consider	random data that are independent of time
	\[
		\omega \in \Omega \mapsto D(\omega) \in X_D, \
		\| D \|_{X_D} \leq \Ov{D},
	\]
	where $\Ov{D}$ is a deterministic constant.
	Let $(D^m)_{m=1}^M$ be a sequence of i.i.d. copies of the
	random variable $D$.
	Suppose $(\vr, \vt, \vu)_{h_n}[D]$ is a sequence of approximate solution generated by a bounded consistent method such that
	\[
		(\vr, \vt, \vu)_{h_n}[D] \ \mbox{is bounded in probability}
		\ \mbox{for}\ h_n \to 0.
	\]
	In addition, let at least one of the following conditions be satisfied:
	\begin{itemize}
		\item
		The boundary temperature $\vtB$ is constant.
		
		\item
		
		\begin{equation} \label{coni}
		\expe{ \intQ{\left( \frac{1}{2} \vr_{h_n} |\vu_{h_n}|^2 + \vr_{h_n} e(\vr_{h_n}, \vt_{h_n}) - \Theta \vr_{h_n} s(\vr_{h_n}, \vt_{h_n}) \right)[D] } } \leq \Ov{E}_B,
		\end{equation}
		where $\Ov{E}_B$ is a deterministic constant.
	\end{itemize}

	Then
	\[
	\lim_{M \to \infty} \lim_{h_n \to 0}
	\norm{\frac1M \sum_{m=1}^M (\vr, S, \vm)_{h_n}[D^{m}]
		- \expe{(\vr, S, \vm)[D]}}_{L^1((0, T) \times Q;\R^{d+2})}  = 0
	\]
	in probability, where we have set $S = \vr s(\vr, \vt)$, $\vm = \vr \vu$.
\end{Theorem}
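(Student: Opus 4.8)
The plan is to follow the proof of Theorem \ref{TMC5} essentially verbatim, replacing only the derivation of the integrability bound \eqref{km1} by an argument that exploits the \emph{expected ballistic energy} hypothesis \eqref{coni} instead of the pointwise conditions \eqref{hyp1}--\eqref{hyp3}. First I would invoke Proposition \ref{Pr2} to obtain, for each fixed sample index $m$, the convergence $(\vr, S, \vm)_{h_n}[D^m] \to (\vr, S, \vm)[D^m]$ in $L^q((0,T)\times Q;\R^{d+2})$ in probability as $h_n\to 0$. Writing
\[
\frac1M \sum_{m=1}^M (\vr, S, \vm)_{h_n}[D^{m}] - \expe{(\vr, S, \vm)[D]}
\]
as the sum of a numerical (consistency) error and a statistical error, the numerical part tends to zero in probability in the iterated limit $\lim_{M\to\infty}\lim_{h_n\to 0}$ because, for fixed $M$, only finitely many terms are involved; the statistical part is treated by the Strong law of large numbers in separable Banach spaces \cite[Corollary 7.10]{LedTal}, the samples $(S,\vm)[D^m]$ being i.i.d.\ copies. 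The only nontrivial input is then the integrability requirement \eqref{km1}.

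For the density component, total mass conservation gives $\intQ{\vr[D](t,\cdot)} = \intQ{\vr_0}$, and the deterministic data bound $\|D\|_{X_D}\le \Ov{D}$ from \eqref{hhyp3a} makes $\|\vr[D]\|_{L^1((0,T)\times Q)}$ deterministically bounded; hence it suffices to establish \eqref{ff}, i.e.\ $\expe{\|(S,\vm)[D]\|_{L^1}}<\infty$. Here I would use the decomposition \eqref{form}, which exhibits the ballistic energy density as the nonnegative, strictly convex relative energy $E(\vr,\vt,\vu\,|\,1,\Theta,0)$ plus a term linear in $\vr$ minus a constant. Since the relative energy is coercive in the variables $(\vr, S, \vm)$, a bound on its expected space--time integral, combined with the already established deterministic control of $\intQ{\vr}$, yields \eqref{ff} and thus \eqref{km1}.

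It therefore remains to transfer the bound \eqref{coni}, stated for the approximate solutions, to the limit strong solution. Because \eqref{form} writes the ballistic energy as a convex function of $(\vr, S, \vm)$ modulo the linear mass term, the associated functional is weakly lower semicontinuous along the Young--measure (weak-$L^q$) convergence produced in the proof of Theorem \ref{CT1}; combining this with Fatou's lemma to interchange expectation and $\liminf$ over $h_n$ (all integrands being bounded below, as $E\ge 0$), I obtain
\[
\expe{\intQ{E(\vr,\vt,\vu\,|\,1,\Theta,0)[D]}} \le \liminf_{h_n\to 0}\expe{\intQ{\Big(\tfrac12\vr_{h_n}|\vu_{h_n}|^2 + \vr_{h_n}e(\vr_{h_n},\vt_{h_n}) - \Theta\vr_{h_n}s(\vr_{h_n},\vt_{h_n})\Big)[D]}} + C \le \Ov{E}_B + C,
\]
the constant $C$ absorbing the linear mass term controlled by \eqref{hhyp3a}. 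Integrating this time--uniform bound over the finite interval $(0,T)$ and invoking coercivity of the relative energy gives \eqref{ff}. In the alternative case where $\vtB$ is constant, one has $\Grad\Theta=0$, and the required bound follows directly from the ballistic energy inequality \eqref{BIn} exactly as in the last paragraph of the proof of Theorem \ref{TMC5}, so no transfer argument is needed.

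The main obstacle will be this transfer step: justifying that the expected space--time integral of the ballistic/relative energy is lower semicontinuous along the approximating sequence and that expectation and limit may legitimately be interchanged. The strict convexity of the relative energy recorded just before \eqref{form}, the weak convergence furnished by Theorem \ref{CT1}, and Fatou's lemma are precisely the ingredients that make this rigorous. Once \eqref{km1} is secured, the remainder of the argument — splitting, vanishing of the numerical error via Proposition \ref{Pr2}, and vanishing of the statistical error via the Strong law of large numbers — is identical to that of Theorem \ref{TMC5}, and yields convergence in probability.
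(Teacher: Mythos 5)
Your proposal is correct and takes essentially the same route as the paper: the paper's own proof of Theorem \ref{TMC5a} is precisely the preceding remark (ii), which observes that the hypotheses \eqref{hyp1}--\eqref{hyp3} of Theorem \ref{TMC5} enter only through the integrability bound \eqref{km1}, that mass conservation handles the density component so that only \eqref{ff} is needed, and that formula \eqref{form} converts the expected ballistic energy bound \eqref{coni} into \eqref{ff}, the constant $\vtB$ case being handled as in the last paragraph of the proof of Theorem \ref{TMC5}. Your explicit lower-semicontinuity/Fatou argument transferring \eqref{coni} from the approximate solutions to the limit bound \eqref{coniL} fills in a step the paper only asserts in the sentence following the theorem, and it is the right justification.
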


As a matter of fact, condition \eqref{coni} is needed to guarantee the same bound for the exact solutions of the limit continuous problem, namely
	\begin{equation} \label{coniL}
	\expe{ \intQ{\left( \frac{1}{2} \vr |\vu|^2 + \vr e(\vr, \vt) - \Theta \vr s(\vr, \vt) \right)[D] } } \leq \Ov{E}_B.
\end{equation}

Condition \eqref{coniL} is quite natural and automatically inherited from the data for conservative (closed) systems, for which the energy is conserved and total entropy non--decreasing in time.
Modifying slightly the form of the equations of state and allowing the transport coefficients to vary with the temperature, we can recover boundedness of the
ballistic energy \eqref{coniL} directly from the available {\it a priori} bounds, see \cite[Chapter 12]{FeiNovOpen}.

\subsubsection{Convergence in the case of discrete samples}

In numerical simulations, the (continuous) data samples $(D^m)_{m=1}^M$ must be approximated by samples ranging in a finite set of deterministic values. For each $(K,M)$, we consider the
deterministic data
\[
D_{k,m} \in X_D,\ k = 1, \dots, K,\ m = 1, \dots, M.
\]
We introduce the random data $[D^{K,1}, \dots, D^{K,M}]$ satisfying
\begin{equation} \label{rd2}
	\mathcal{L}\left[ D^{K,1}, \dots, D^{K,M} \right] = \frac{1}{K}
	\sum_{k=1}^K \delta_{[D_{k,1}, \dots, D_{k,M}]} \in \mathfrak{P}[X^M_D]
\end{equation}
together with marginals
\begin{equation} \label{rd1}
\mathcal{L} [D^{K,m}] = \frac{1}{K} \sum_{k=1}^K \delta_{D_{k,m}}
\ \in \mathfrak{P}[X_D],\
m = 1, \dots, M.
\end{equation}

Finally, we require
\[
[D^{K,1}, \dots, D^{K,M}] \to [D^1, \dots, D^M] \ \mbox{in law,}
\]
meaning
\begin{equation} \label{rd3}
\frac{1}{K}\sum_{k=1}^K \delta_{[D_{k,1}, \dots, D_{k,M}]} \to
\mathcal{L}[D^1, \dots, D^M] \ \mbox{as}\ K \to \infty
\ \mbox{narrowly in}\ \mathfrak{P}[X^M_D],
\end{equation}
where $(D^m)_{m=1}^M$ are  i.i.d. copies of $D$.

\begin{Theorem}[{{\bf Convergence of Monte Carlo method, IV}}] \label{TMC2}
	Let $\{ \Omega, \mathcal{B}, \prst \}$ be a probabilistic basis.
	Suppose $\vuB = 0$ and consider random data  that are independent of time
	\[
	\omega \in \Omega \mapsto D(\omega) \in X_D.
	\]
	Let $(D^m)_{m=1}^M$ be a sequence of i.i.d. copies of the
	random variable $D$, and let $(D_{k,m})_{m=1, k=1}^{M,K}$
	be its discrete approximation specified in \eqref{rd2}--\eqref{rd3}.
	Let $(\vr, \vt, \vu)_{h_n}[D_{k,m}]$ be a family of approximate solutions generated by a bounded consistent method enjoying the following property:

	For any $\ep > 0$, there exists $\Lambda(\ep)$ such that
	\begin{align}
	\frac{ \# \left\{ k \leq K \Big| \ \left\| \left( \frac{1}{\vr}, \frac{1}{\vt} \right)_{h_n}[D_{k,m}] \right\|_{L^\infty((0,T) \times Q; \R^2)}
	+ \Big\| \left( \vr, \vt , \vu \right)_{h_n}[D_{k,m}] \Big\|_{L^\infty((0,T) \times Q; \R^{d+2})} > \Lambda (\ep)
	 \right\}}{K} < \ep
	 \label{rd4}
	\end{align}
%	\begin{align}\cred \mbox{or }
%	\frac{ \# \left\{ k \leq K \Big| \ \sum\limits_{m=1}^M\left(  \left\| \left( \frac{1}{\vr}, \frac{1}{\vt} \right)_{h_n}[D_{k,m}] \right\|_{L^\infty((0,T) \times Q; \R^2)}
%	+ \Big\| \left( \vr, \vt , \vu \right)_{h_n}[D_{k,m}] \Big\|_{L^\infty((0,T) \times Q; \R^{d+2})} \right)> \Lambda (\ep)
%	 \right\}}{K} < \ep
%	 \label{rd4}
%	\end{align}	
uniformly for $K \to \infty$, $h_n \to 0$ for each fixed $m=1,\dots,M$.

	Then
	\begin{equation} \label{rd8}
	\limsup_{K \to \infty,  h_n \to 0} \frac{1}{K}
	\sum_{k=1}^K
	\norm{\frac{1}{M} \sum_{m=1}^M  B(\vr, \vt, \vu)_{h_n}[D_{k,m}]
			- \expe{B(\vr, \vt, \vu)[D]}}_{L^q((0, T) \times Q)}   \aleq \frac{1}{\sqrt{M}}
	\end{equation}
	for any $B \in BC(\R^{d+2})$ and any $2 \leq q < \infty$.
\end{Theorem}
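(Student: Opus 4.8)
The plan is to reduce the two-parameter limit $K\to\infty$, $h_n\to0$ to a purely statistical (Monte Carlo) estimate over the i.i.d.\ copies, and then to quantify the latter by a Marcinkiewicz--Zygmund type inequality. To this end I introduce the functional
\[
G(\bfu_1,\dots,\bfu_M) = \norm{\frac1M\sum_{m=1}^M B(\bfu_m) - \expe{B(\vr,\vt,\vu)[D]}}_{L^q((0,T)\times Q)},\qquad (\bfu_1,\dots,\bfu_M)\in L^q((0,T)\times Q;\R^{d+2})^M,
\]
and observe that the left-hand side of \eqref{rd8} is exactly the average of $G$ evaluated on the approximate solution vector under the empirical data law \eqref{rd2}. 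First I would check that $G$ is bounded and continuous on the product $L^q$ space: boundedness is immediate from $B\in BC(\R^{d+2})$, so $\norm{G}_\infty\le 2\norm{B}_\infty|(0,T)\times Q|^{1/q}$, while continuity follows since $\bfu\mapsto B(\bfu)$ maps $L^q$ to $L^q$ continuously (convergence in $L^q$ yields a.e.\ convergence along a subsequence, continuity of $B$ gives a.e.\ convergence of $B(\bfu_n)$, and boundedness of $B$ together with the finiteness of $|(0,T)\times Q|$ gives $L^q$ convergence by dominated convergence).

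Next I would invoke the convergence-in-law result. For any coupled sequence $(K_j,h_{n_j})$ with $K_j\to\infty$, $h_{n_j}\to0$, the discrete data $[D^{K_j,1},\dots,D^{K_j,M}]$ converge in law to $[D^1,\dots,D^M]$ by \eqref{rd3}, and the associated approximate solutions are bounded in probability by \eqref{rd4}. Hence Proposition~\ref{Pr1} together with Remark~\ref{PR1} applies and yields
\[
\mathcal L\Big[(\vr,\vt,\vu)_{h_{n_j}}[D^{K_j,1},\dots,D^{K_j,M}]\Big]\to \mathcal L\Big[(\vr,\vt,\vu)[D^1,\dots,D^M]\Big]
\]
narrowly in $\mathfrak P\big[L^q((0,T)\times Q;\R^{d+2})^M\big]$, where the $(\vr,\vt,\vu)[D^m]$ are the unique strong solutions, which by the conditional regularity of Corollary~\ref{Cs1} exist on the whole interval $[0,T]$ thanks to boundedness in probability. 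Testing the narrow convergence with the bounded continuous functional $G$, and using that the limit is independent of the chosen coupled subsequence, I obtain
\[
\lim_{K\to\infty,\,h_n\to0}\frac1K\sum_{k=1}^K G\big((\vr,\vt,\vu)_{h_n}[D_{k,1}],\dots,(\vr,\vt,\vu)_{h_n}[D_{k,M}]\big)=\expe{\,G\big((\vr,\vt,\vu)[D^1],\dots,(\vr,\vt,\vu)[D^M]\big)\,}.
\]

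It then remains to estimate the Monte Carlo error on the right. Setting $Z_m = B(\vr,\vt,\vu)[D^m]-\expe{B(\vr,\vt,\vu)[D]}$, the $Z_m$ are i.i.d., centred, and bounded in $L^q$. By Jensen's inequality $\expe{G}\le\big(\expe{\norm{\frac1M\sum_m Z_m}_{L^q}^q}\big)^{1/q}$, and Fubini gives
\[
\expe{\norm{\frac1M\sum_{m=1}^M Z_m}_{L^q}^{q}}=\int_{(0,T)\times Q}\expe{\Big|\frac1M\sum_{m=1}^M Z_m(t,x)\Big|^{q}}\dxdt.
\]
For each fixed $(t,x)$ the scalar Marcinkiewicz--Zygmund inequality bounds the inner expectation by $C_q\big(M^{-q/2}(\expe{|Z_1|^2})^{q/2}+M^{-(q-1)}\expe{|Z_1|^q}\big)$; since $q\ge2$ forces $M^{-(q-1)}\le M^{-q/2}$ and the moments of $Z_1$ are bounded uniformly in $(t,x)$, integration yields $\expe{\norm{\frac1M\sum_m Z_m}_{L^q}^q}\aleq M^{-q/2}$, whence $\expe{G}\aleq M^{-1/2}$, which is \eqref{rd8}. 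The main obstacle is the first, analytic part: transferring the narrow convergence furnished by Proposition~\ref{Pr1} to the empirical averages in the genuinely two-parameter limit. This requires recognizing the empirical average as the integral of the bounded continuous $G$ against the pushforward of the empirical data law \eqref{rd2}, a subsequence argument ensuring the same limit along every coupled sequence $(K_j,h_{n_j})$—which hinges on \emph{uniqueness} of the limiting strong solution and on conditional regularity guaranteeing $T_{\max}>T$ under boundedness in probability—and the continuity of $B$, hence of $G$, on $L^q$. Once the limit is identified with the Monte Carlo expectation, the rate $M^{-1/2}$ is the standard variance estimate, the only delicate point being the requirement $q\ge2$ that makes the fluctuation term dominate in the Marcinkiewicz--Zygmund bound.
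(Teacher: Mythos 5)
Your proposal is correct, and its analytic core coincides with the paper's own argument: you recognize the empirical average over $k$ as the expectation of the bounded continuous functional $G$ under the empirical law \eqref{rd2}, invoke Proposition \ref{Pr1} (with Remark \ref{PR1}, using \eqref{rd3} for convergence in law of the data and \eqref{rd4} for boundedness in probability of the solutions) to handle the two--parameter limit $K \to \infty$, $h_n \to 0$, and thereby identify the limit of the left--hand side of \eqref{rd8} with $\expe{G\big((\vr,\vt,\vu)[D^1],\dots,(\vr,\vt,\vu)[D^M]\big)}$ --- this is precisely the paper's chain \eqref{rd5}. Where you genuinely diverge is the remaining statistical estimate \eqref{rd6}: the paper quotes the law--of--large--numbers/type--2 inequality for sums of independent random variables in separable Banach spaces, \cite[Proposition 9.11]{LedTal} (recall that $L^q$, $2 \leq q < \infty$, has type 2), whereas you derive the $M^{-1/2}$ rate by hand via Jensen's inequality, Fubini, and the scalar Marcinkiewicz--Zygmund (Rosenthal) inequality applied pointwise in $(t,x)$. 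Your route is more elementary and self--contained; it exploits that $B$ is bounded and $(0,T)\times Q$ has finite measure, so that the centred summands $Z_m(t,x)$ are uniformly bounded i.i.d.\ scalars (the identification of the Bochner mean $\expe{B(\vr,\vt,\vu)[D]}$ with the pointwise mean a.e., and joint measurability for Fubini, are standard and harmless). The Banach--space result used in the paper is stronger in that it requires only square integrability of the summands rather than boundedness --- which matters when one tries to remove the cut--off $B$, cf.\ Remark \ref{Rrd1} and Theorems \ref{TMC5}, \ref{TMC5a} --- and it is the same tool the paper reuses in Theorem \ref{TMC4}; but for the present statement, with $B \in BC(\R^{d+2})$ in place, the two arguments are interchangeable, and both transparently explain the restriction $q \geq 2$ (type 2 of $L^q$ versus the dominance of the fluctuation term $M^{-q/2}$ over $M^{-(q-1)}$ in your bound).
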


\begin{proof}
	
	First observe that hypothesis \eqref{rd4} says nothing other than
\[
\Big( (\vr, \vt, \vu)_{h_n} [D^{K,1}, \dots, D^{K,M}] \Big)_{K \geq 1, h_n > 0} \in [ L^q((0,T) \times Q; \R^{d+2}) ]^M,\
K \to \infty, \ h_n \to 0,
\]	
is bounded in probability in the sense of \eqref{boundBC}. Applying Proposition \ref{Pr1}, we get
\[
(\vr, \vt, \vu)_{h_n}[D^{K,1}, \dots, D^{K,M}]
\to (\vr, \vt, \vu)[D^1, \dots, D^M] \ \mbox{as}\ K \to \infty, h_n \to 0
\]
in law in $[L^q((0,T) \times Q; \R^{d+2})]^M$. In particular, for a bounded continuous function
\begin{align}
(Y^1, \dots, Y^M) &\in [L^q((0,T) \times Q; \R^{d+2})]^M \br &\mapsto \left\| \frac1M \sum_{m=1}^M B(Y^m) - \expe{B(\vr, \vt, \vu)[D]} \right\|_{L^q((0,T; \Omega))}
\in \ BC[L^q((0,T) \times Q; \R^{d+2})]^M
\nonumber
\end{align}
we get
\begin{align}
&\frac{1}{K} \sum_{k=1}^K\left\| \frac{1}{M}  \sum_{m=1}^M B(\vr, \vt, \vu)_{h_n}[D_{k,m}] - \expe{B(\vr,\vt, \vu) [D]}  \right\|_{L^q((0,T) \times Q)}	\br
&\quad = \expe{\left\| \frac{1}{M} \sum_{m=1}^M B(\vr, \vt, \vu)_{h_n}[D^{K,m}] - \expe{B(\vr,\vt, \vu) [D]}  \right\|_{L^q((0,T) \times Q)}  }	\br
&\quad \to \expe{ \left\| \frac{1}{M}\sum_{m=1}^M B(\vr,\vt, \vu) [D^m] - \expe{B(\vr, \vt, \vu)[D]} \right\|_{L^q((0,T) \times Q)}    }
\ \mbox{as}\ K \to \infty, \ h_n \to 0.
	\label{rd5}
	\end{align}

Thus, finally, using once more the Strong law of large numbers for random variables ranging
in separable Banach spaces \cite[Proposition 9.11]{LedTal}, we deduce
\begin{equation} \label{rd6}
\expe{ \left\| \frac{1}{M}\sum_{m=1}^M B(\vr,\vt, \vu) [D^m] - \expe{B(\vr, \vt, \vu)[D]} \right\|_{L^q((0,T) \times Q)}    } \aleq \frac{1}{\sqrt{M}}.
\end{equation}
Relations \eqref{rd5} and \eqref{rd6} yield the desired conclusion \eqref{rd8}.
\end{proof}
	
\begin{Remark} \label{Rrd1}
	Similarly to Theorem \ref{TMC5},
	the cut--off function $B$ in Theorem \ref{TMC2} can be omitted under certain extra assumptions imposed on the approximate sequence.
	
	\end{Remark}

\section{Numerical experiments}\label{NumSim}

We present  a series of numerical simulations of the 2D Rayleigh--B\' enard convection problem. The spatial domain
\[
Q = [-2,2]|_{\{-2,2\}} \times [-1,1]
\]
corresponds to the periodic strip in the $x_1$-direction, with the Dirichlet boundary conditions prescribed on the lateral boundary
$x_2 = -1$, $x_2 = 1$.
 Numerical simulations were obtained by an upwind finite volume (FV) method that will be presented and analyzed in the appendix.

\subsection{Deterministic simulation} \label{numsim-d}

We start with the deterministic Rayleigh--B\' enard problem. Specifically, the data are given by
\begin{align*}
& \vr_D(x) = 1.2 +   \sin \left( \frac{\pi x_2}2\right), \quad \vu_D(x) = (0, \,c \sin(2\pi x_2))^t \quad \mbox{implying} \quad  \vu_B|_{\partial Q} = 0,\\
& \vt_D(x) = a + b  x_2 +  c P(x_1)\sin(\pi x_2)  \quad \mbox{implying} \quad
\vt_B|_{x=(\cdot,-1)} =  15,\quad \vt_B|_{x=(\cdot,1)} =  1 ,\\
&\vc{g} = (0,-10),
\end{align*}
where
\begin{align*}
& a = \frac{1+15}{2}, \quad b = \frac{1-15}{2}, \quad
P(x_1) = \sum_{j=1}^{10} a_j \cos( b_j + 2j\pi x_1), \quad  c = 0.01
\end{align*}
and $a_j \in [0,1], b_j \in [-\pi, \pi], j = 1, \dots, 10$ are arbitrary fixed numbers. The coefficients $a_j$ have been normalized so that $\sum_{j=1}^{10} a_j = 1$ to guarantee that the perturbation is small.
The parameters appearing in the Navier--Stokes--Fourier system \eqref{i1}--\eqref{i9} are taken as
\begin{align*}
\mu =  \lambda = 0.1, \quad  \kappa = 0.01,\quad \gamma = 1.4.
\end{align*}
In Figure \ref{fig-D-1} we present the numerical solutions $(\vrh,\vth,\vuh)$ at time $T = 8$ and $T=25$  on a mesh with $640\times 320$ uniform cells (i.e. the mesh parameter $h=2/320$).
We can clearly observe that warmer fluid rises up and later falls down due to gravity force. This leads to the well-known Rayleigh--B\' enard convection cells.

\begin{figure}[htbp]
	\setlength{\abovecaptionskip}{0.cm}
	\setlength{\belowcaptionskip}{-0.cm}
	\centering
	\includegraphics[width=\textwidth]{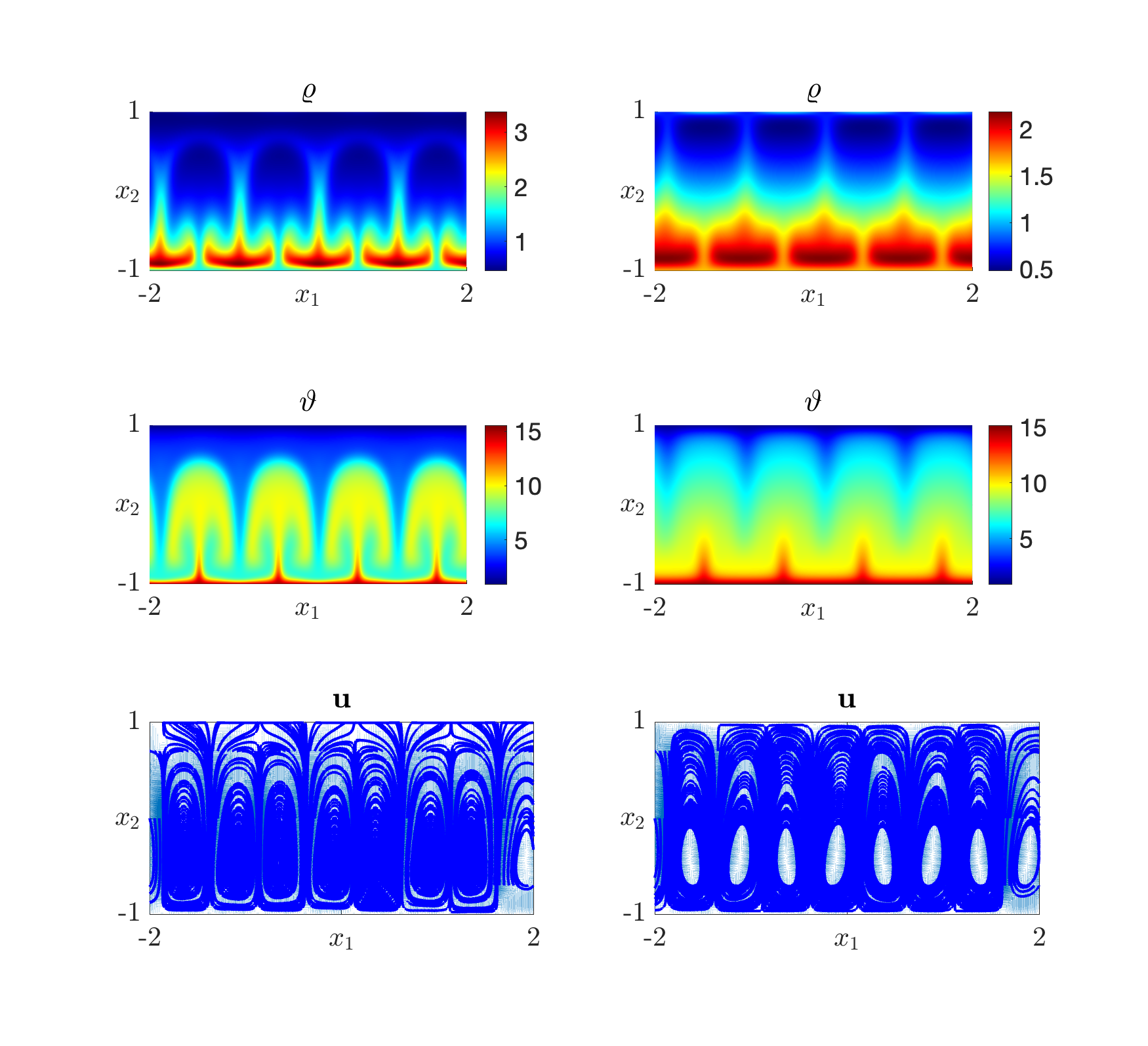}
	\caption{  \small{Deterministic Rayleigh--B\' enard experiment: FV solutions obtained at time $T = 8$ (left) and $T=25$ (right) on a mesh with $640\times 320$ cells. From top to bottom: density (top); temperature (middle); streamline (bottom).  }}\label{fig-D-1}
\end{figure}

%%%%%%%%%%%%%%%%%%%%%
\subsection{Stochastic collocation finite volume simulations}
Based on the above Rayleigh--B\' enard  simulations we continue with uncertain data for density and temperature:
\begin{align*}
& \vr_D(x) = 1.2 +  (1+Y(\omega)) \sin \left( \frac{\pi x_2}2\right), \\
& \vt_D(x) = a + b  x_2 +  c P(x_1)\sin(\pi x_2) + Y(\omega) \sin(\pi x_1)\sin\left( \frac{\pi (x_2+1)}4\right) \\
&  \mbox{implying} \quad \vt_B|_{x=(\cdot,1)} =  1 +Y(\omega) \sin(\pi x_1).
\end{align*}
Here $Y$ is a uniformly distributed random variable $Y \sim  {\cal U}\left( -0.1, 0.1\right)$.
%together with the probability space $\Omega = [-0.1, 0.1]$ and the probability measure $\mathcal{P} = 5 {\rm d}y$. Here ${\rm d}y$ is the Lebesgue measure.
%Here $\omega$ is the random variable obeying the uniform distribution $\omega \ \sim \ \mathcal{U}\left( -0.1, 0.1\right)$.

In Figure \ref{fig-SCFV} we present the expectation of the stochastic collocation finite volume (SCFV) solutions obtained with $\nu(n)=n=80$ stochastic collocation points and a computational mesh with $h = 2/320$. Final time was set to $T = 8$. We also present the FV solutions obtained by deterministic data (i.e. $\omega = 0$) for comparison, cf. Section \ref{numsim-d}.
The details of the random numerical solutions $\vr,\vt$ are displayed in Figures \ref{fig-SCFV-1} and \ref{fig-SCFV-4}.
Numerical simulations indicate that the structure of the convection cells changes significantly for uncertain data.

\

In order to illustrate our theoretical results, cf. Theorem \ref{TMC1}, experimentally, we calculate the error of expected value, denoted by $E_1^{SC}$, and the error of deviation, denoted by $E_2^{SC}$:
\begin{align}
&E_1^{SC}(U,h,n) =  \left\| \frac{1}{n} \sum_{j = 1}^n \Big(U_{h,n}^{j} (T, \cdot)- \expe{ U (T, \cdot) } \Big) \right\|_{L^1(Q)},
\\
&E_2^{SC}(U,h,n) =
   \left\| \frac 1 n \sum\limits_{j=1}^n \Big| U_{h,n}^{j}(T, \cdot)  - \frac{1}{n}  \sum\limits_{l=1}^n U_{h,n}^{l}(T, \cdot)  \Big| \ -  \Dev{U(T, \cdot) }  \right\|_{L^1(Q)}.
\end{align}
 Here $U_{h,n}^{j}$, $U \in \{ \vr, \vm, S, \vu, \vt \}$, is the SCFV solution associated to the $j$-th stochastic collocation point $\omega_{n,j} = -0.1 + \frac{0.2}{n}(j-0.5), j =1,\dots, n$.
%Here {\cred $M:= \nu(n)$ is the number of collocation points and $U_{h,M}^{j} = U_h[D(\omega_{n,j})], U \in \{ \vr, \vm, S, \vu, \vt \}$ is the SCFV approximation obtained with mesh size $h$ associated to the $j$-th stochastic collocation point $\omega_{n,j} = -0.1 + \frac{0.2}{M}(j-0.5), j =1,\dots,M$.} %Moreover, we take $\omega_{n,j} = -0.1 + \frac{0.2}{M}(j-0.5), j =1,\dots,M$ for random variable $Y(\omega) \ \sim \ \mathcal{U}\left( -0.1, 0.1\right)$.

\

Figure \ref{fig-SCFV-Err-2}  presents the errors $E_i^{SC}(U,h,n(h))$, $i=1,2,$ of the SCFV approximations with respect to the pair
\begin{align*}
& (h,1/n(h))= (2/(32 \cdot 2^l), \, 1/(5 \cdot 2^l)), \quad l = 0,\dots,3.
\end{align*}
Note that the exact values of $\expe{ U (T, \cdot) }$ and  $\Dev{U(T, \cdot) }$ are approximated by the reference SCFV solution obtained with $n_{ref} = 80$ and $h_{ref} = 2/320$.
The numerical results confirm  the expected first order convergence rate with respect to the pair $(h,1/n) = (h,\mathcal{O}(h))$, $h \to 0$. The convergence rates of the deviations are slightly worse than 1.

\begin{figure}[htbp]
	\setlength{\abovecaptionskip}{0.cm}
	\setlength{\belowcaptionskip}{-0.cm}
	\centering
	\includegraphics[width=\textwidth]{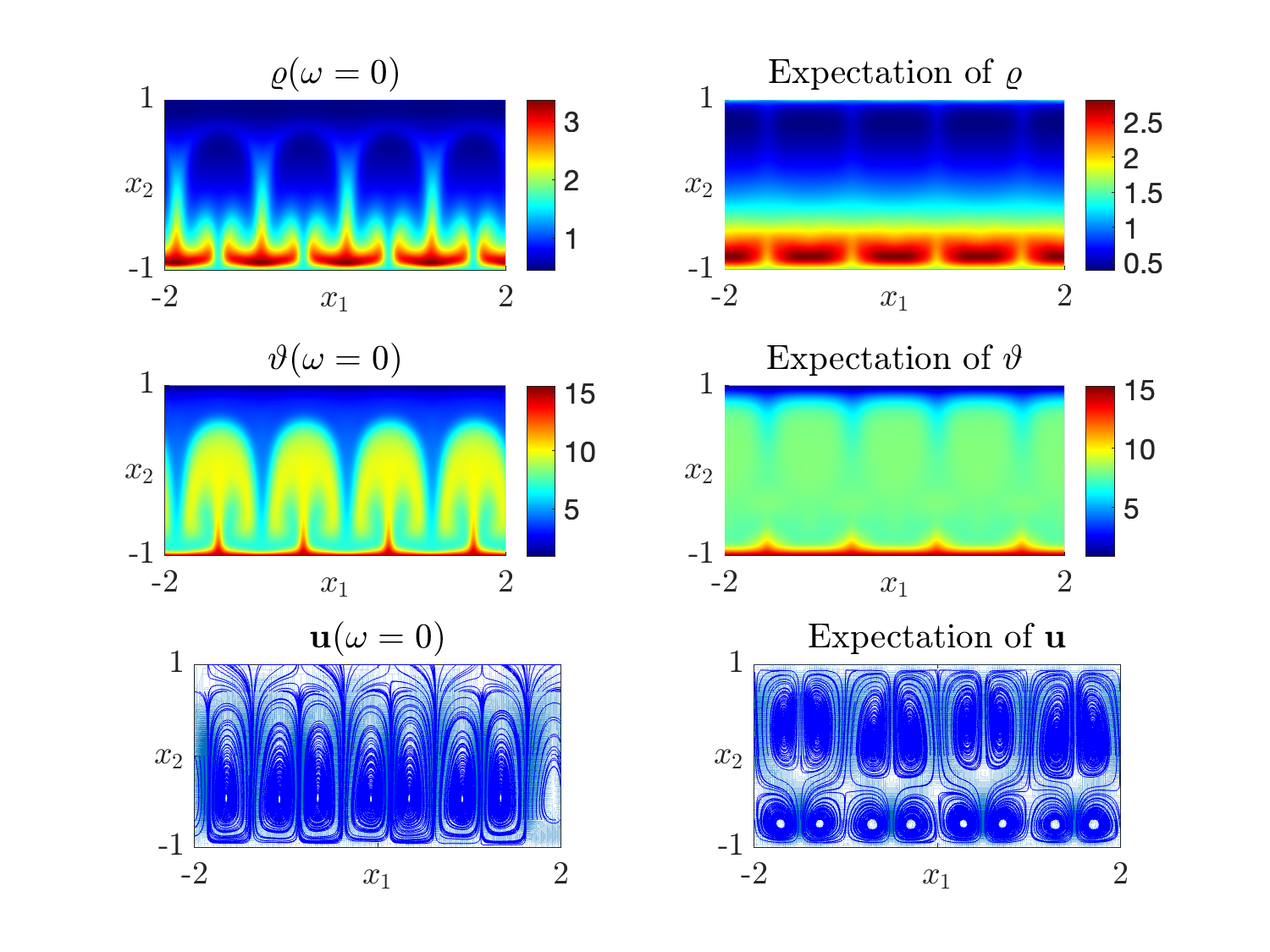}
	\caption{  \small{Rayleigh--B\' enard experiment:  the deterministic FV solution %obtained with {\cred $\omega = 0$}
	(left) and the {\bf SCFV  solutions}
	%obtained with $\omega \in \Omega \equiv [-0.1,0.1], \mathcal{P} = 5 {\rm d} y$
	(right) at time $T = 8$ on a mesh with $640\times 320$ cells. From top to bottom: density (top); temperature (middle); streamline (bottom).  }}\label{fig-SCFV}
\end{figure}

\begin{figure}[htbp]
	\centering
	\begin{subfigure}{0.48\textwidth}
		\includegraphics[width=\textwidth]{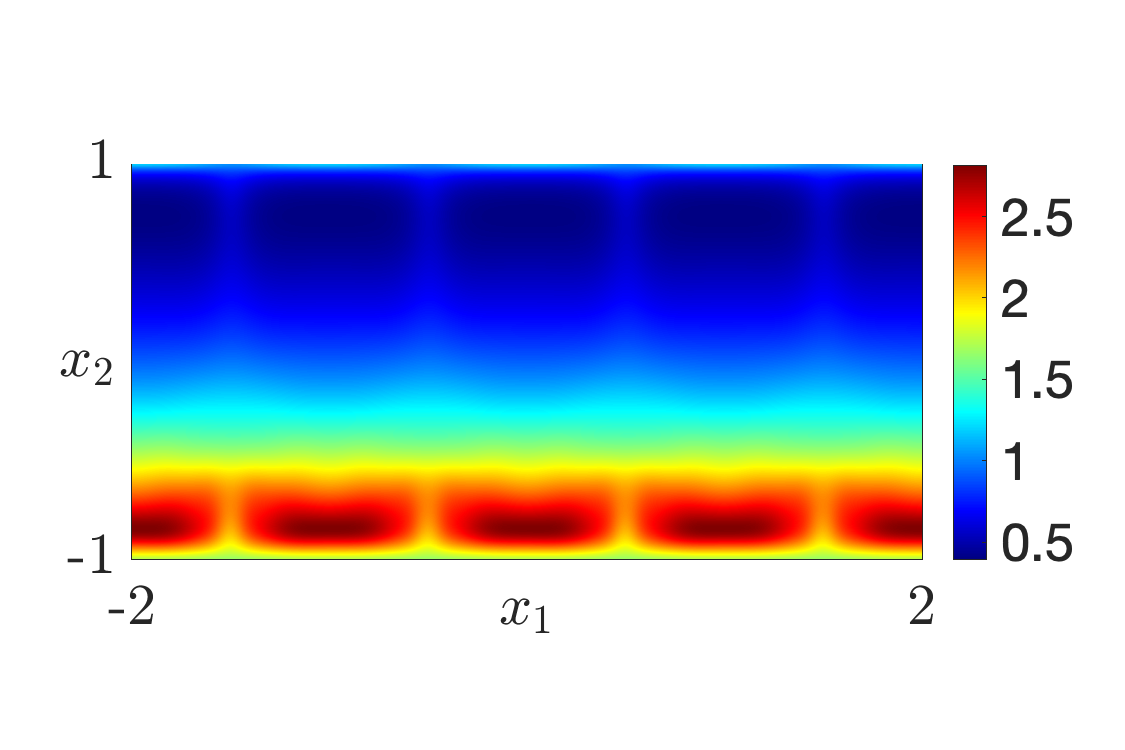}
		    \vspace{-1.cm}
\caption{ \bf $\vr$ - Expectation}
	\end{subfigure}	
	\begin{subfigure}{0.48\textwidth}
		\includegraphics[width=\textwidth]{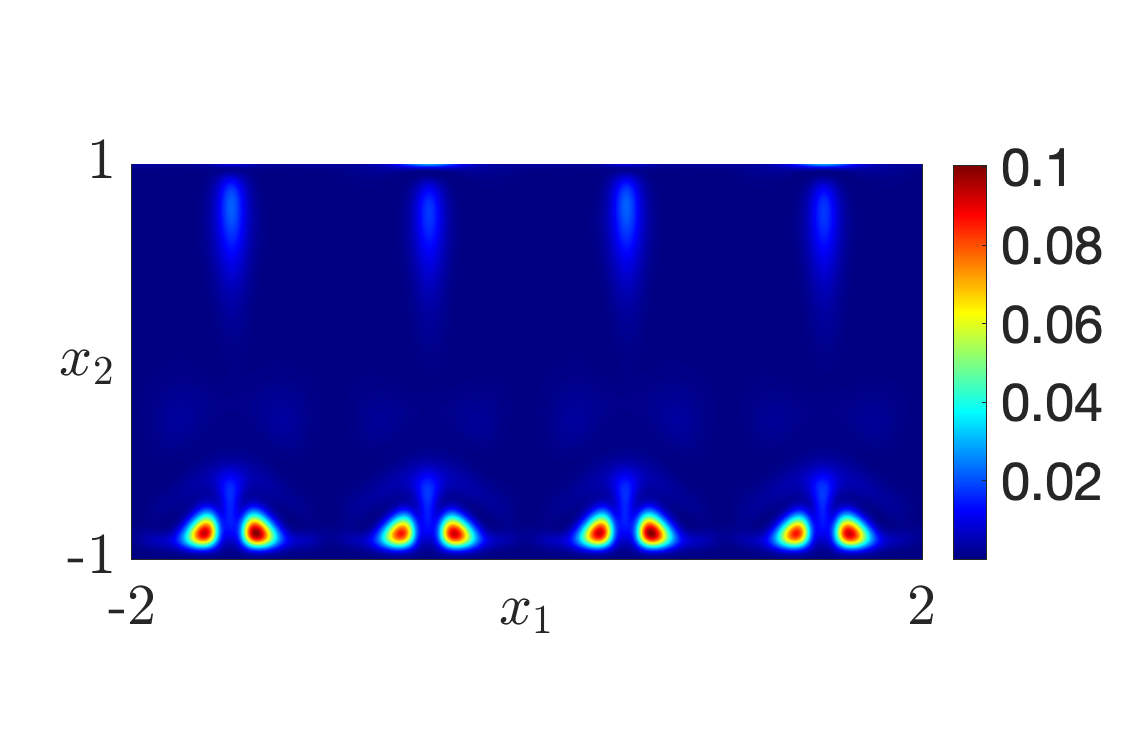}
		    \vspace{-1.cm}
\caption{ \bf $\vr$ - Variance }
	\end{subfigure}	\\
	\begin{subfigure}{0.45\textwidth}
		\includegraphics[width=\textwidth]{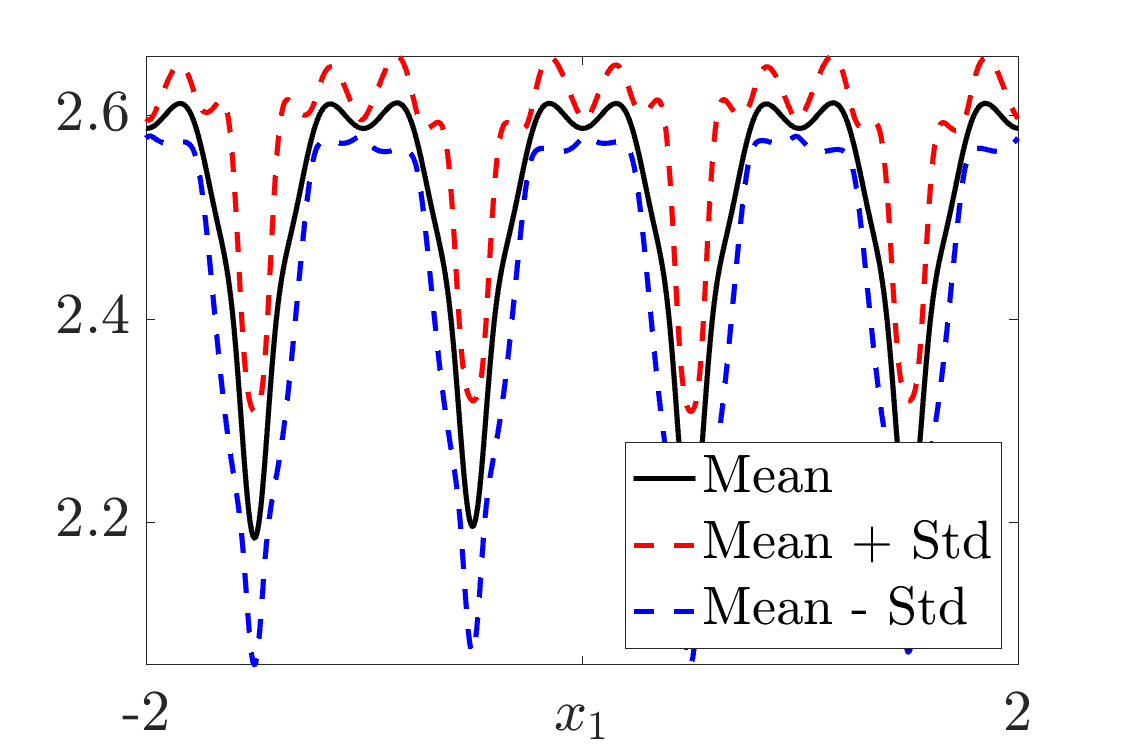}
		\caption{ \bf $\vr$ at $x_2 =  -3/4$ }
	\end{subfigure}	
%	\begin{subfigure}{0.32\textwidth}
%		\includegraphics[width=\textwidth]{img-random-SCFV/rhoPRONUM52MX640MY320CO80L2}
%		\caption{ \bf $\vr$ at $x_2 =  0$ }
%	\end{subfigure}	
	\begin{subfigure}{0.45\textwidth}
		\includegraphics[width=\textwidth]{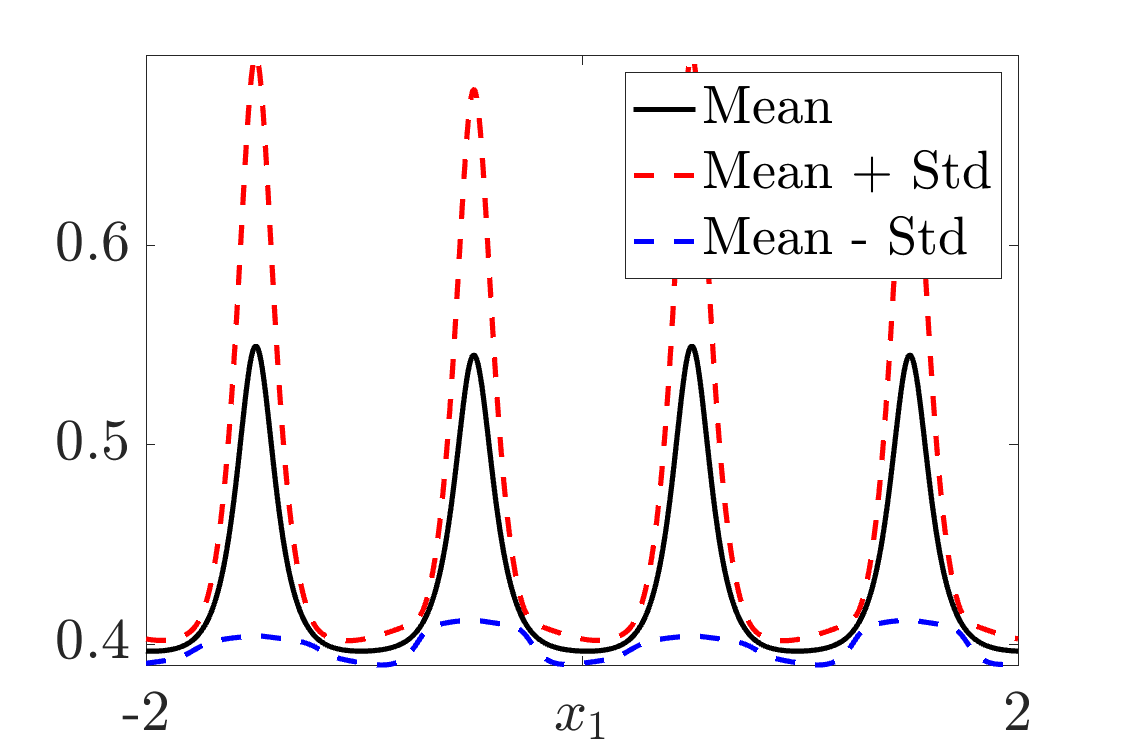}
		\caption{ \bf $\vr$ at $x_2 =  3/4$ }
	\end{subfigure}	
	\caption{  \small{{\bf SCFV solutions $\vr$} obtained
	%by the Stochastic Collocation FV method
	with $80$ stochastic collocation points on a mesh with $640 \times 320$ cells. Top:  expectation (left) and variance (right) of $\vr$. Bottom: from left to right: $\vr$ at  lines $x_2 =  -3/4$ and $x_2 =3/4$.}}\label{fig-SCFV-1}
\end{figure}

\begin{figure}[htbp]
	\centering
	\begin{subfigure}{0.48\textwidth}
		\includegraphics[width=\textwidth]{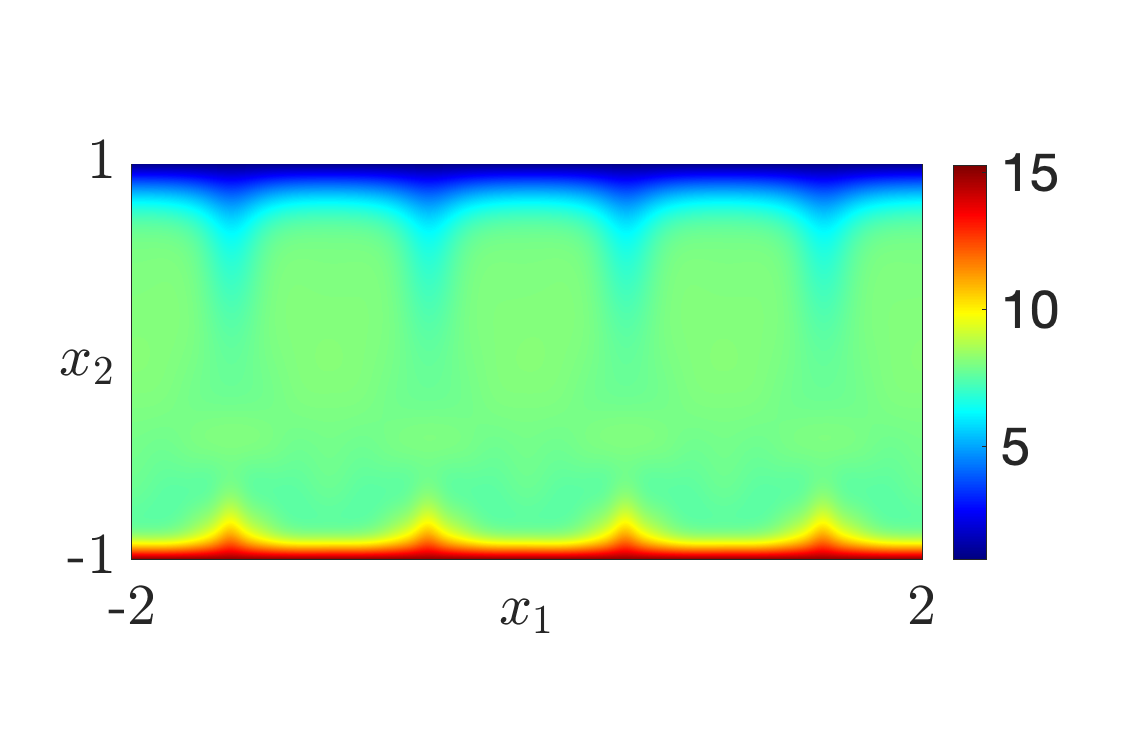}
				    \vspace{-1.cm}
\caption{ \bf $\vt$ - Expectation}
	\end{subfigure}	
	\begin{subfigure}{0.48\textwidth}
		\includegraphics[width=\textwidth]{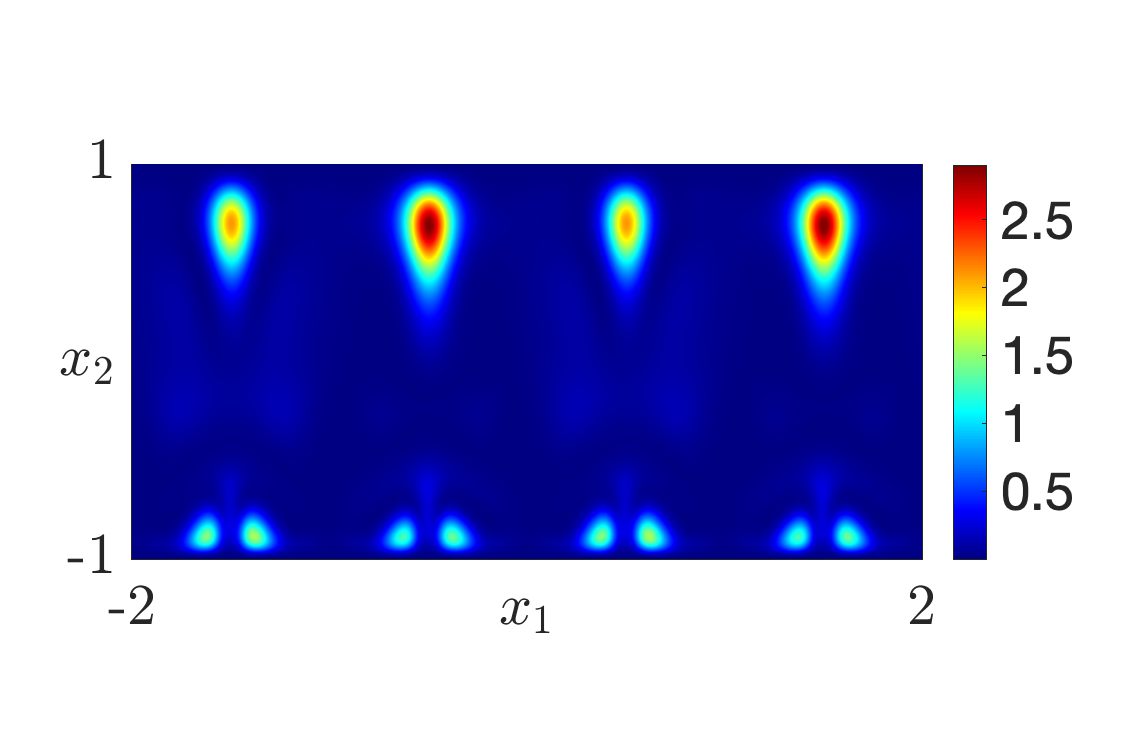}
				    \vspace{-1.cm}
\caption{ \bf $\vt$ - Variance }
	\end{subfigure}	\\
	\begin{subfigure}{0.45\textwidth}
		\includegraphics[width=\textwidth]{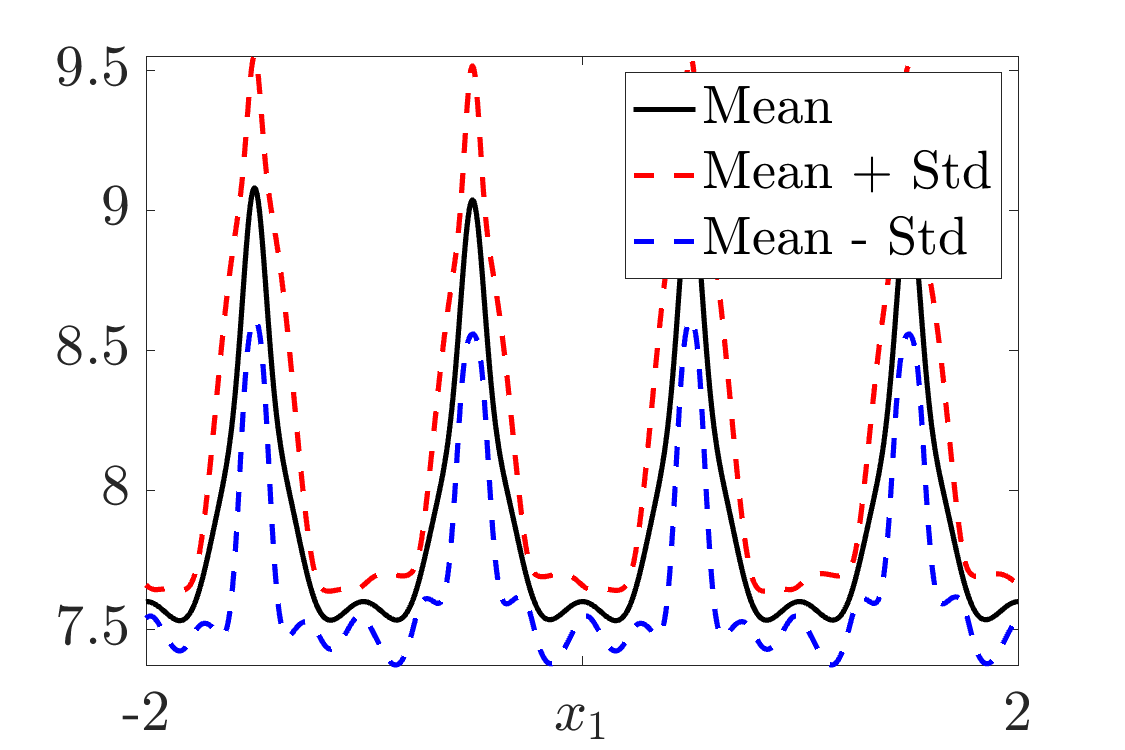}
		\caption{ \bf $\vt$ at $x_2 =  -3/4$ }
	\end{subfigure}	
%	\begin{subfigure}{0.32\textwidth}
%		\includegraphics[width=\textwidth]{img-random-SCFV/temperaturePRONUM52MX640MY320CO80L2}
%		\caption{ \bf $\vt$ at $x_2 =  0$ }
%	\end{subfigure}	
	\begin{subfigure}{0.45\textwidth}
		\includegraphics[width=\textwidth]{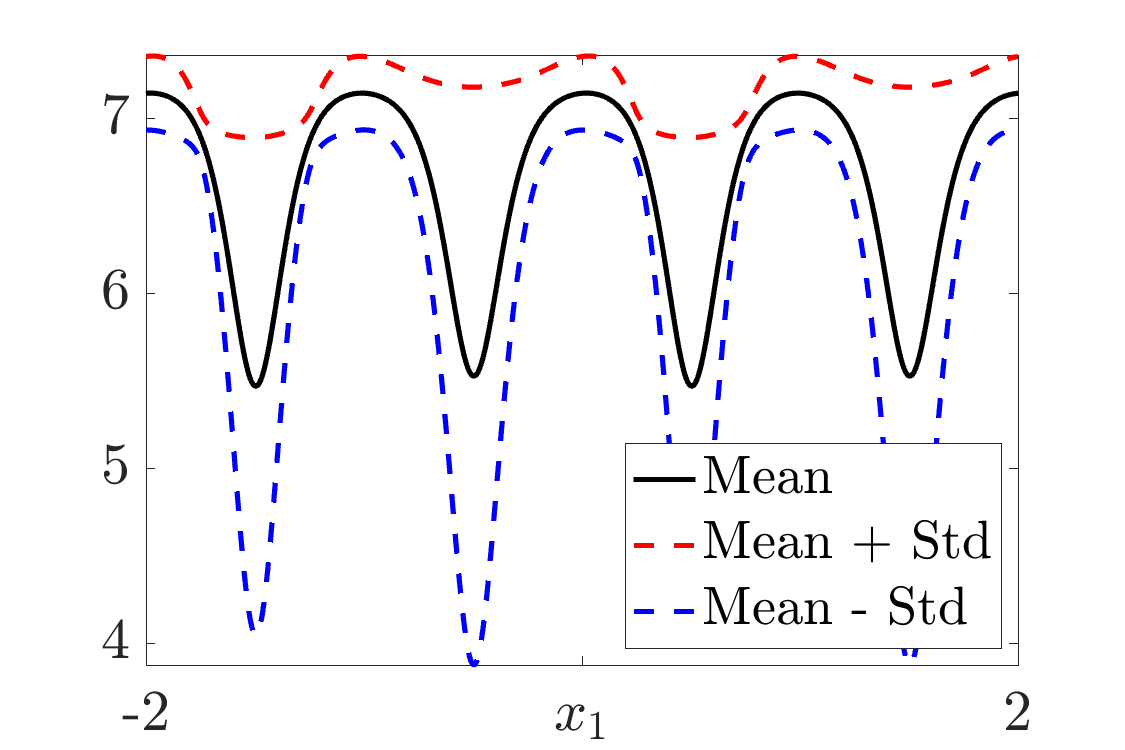}
		\caption{ \bf $\vt$ at $x_2 =  3/4$ }
	\end{subfigure}	
	\caption{  \small{{\bf SCFV solutions $\vt$} obtained %by the Stochastic Collocation FV method
	with $80$ stochastic collocation points on a mesh with $640 \times 320$ cells. Top: expectation (left) and variance (right) of $\vt$. Bottom: from left to right: $\vt$ at  lines $x_2 =  -3/4$ and $x_2 =3/4$.}}\label{fig-SCFV-4}
\end{figure}

\begin{figure}[htbp]
	\setlength{\abovecaptionskip}{0.cm}
	\setlength{\belowcaptionskip}{-0.cm}
	\centering
	\includegraphics[width=0.49\textwidth]{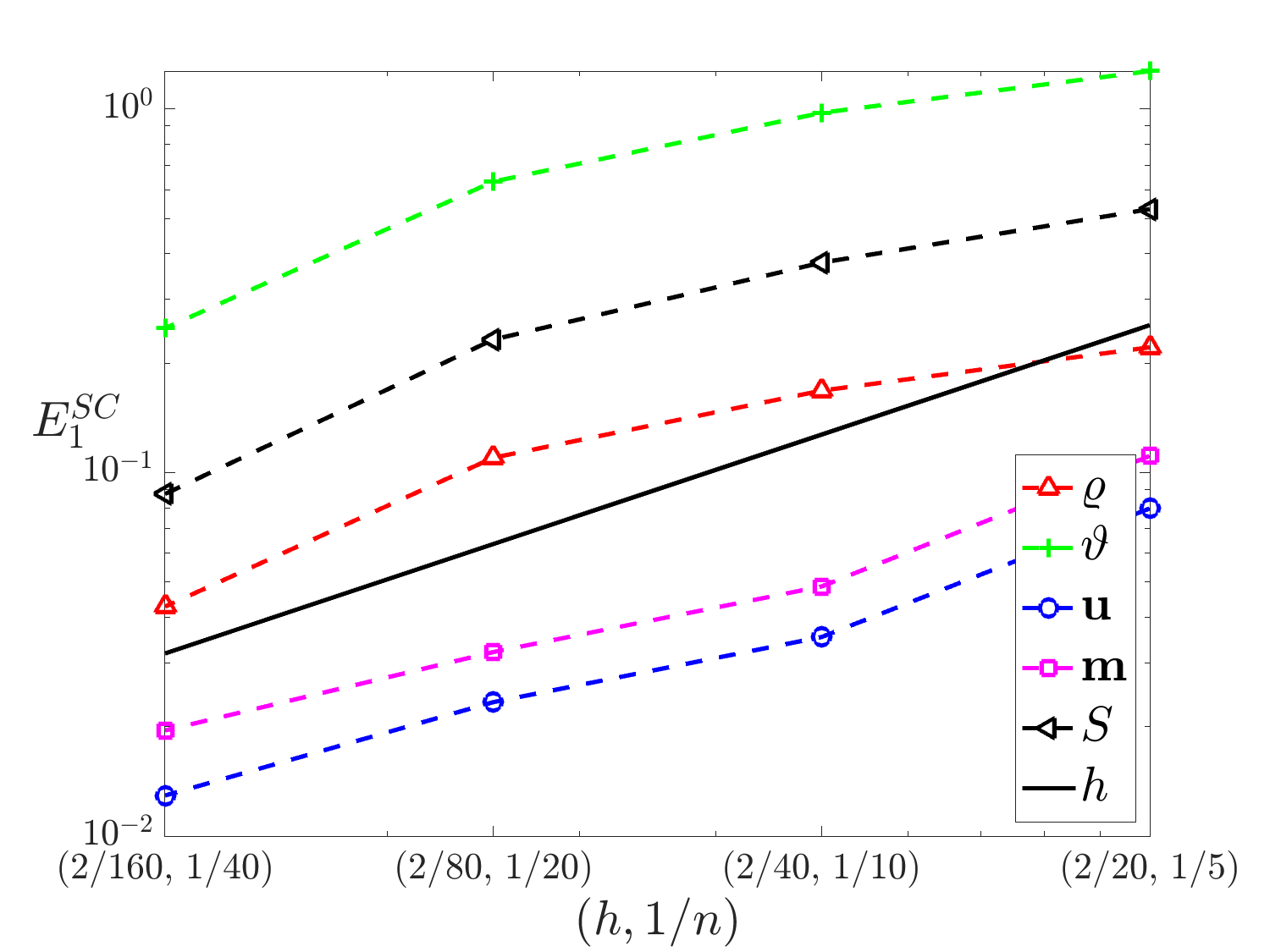}
	\includegraphics[width=0.49\textwidth]{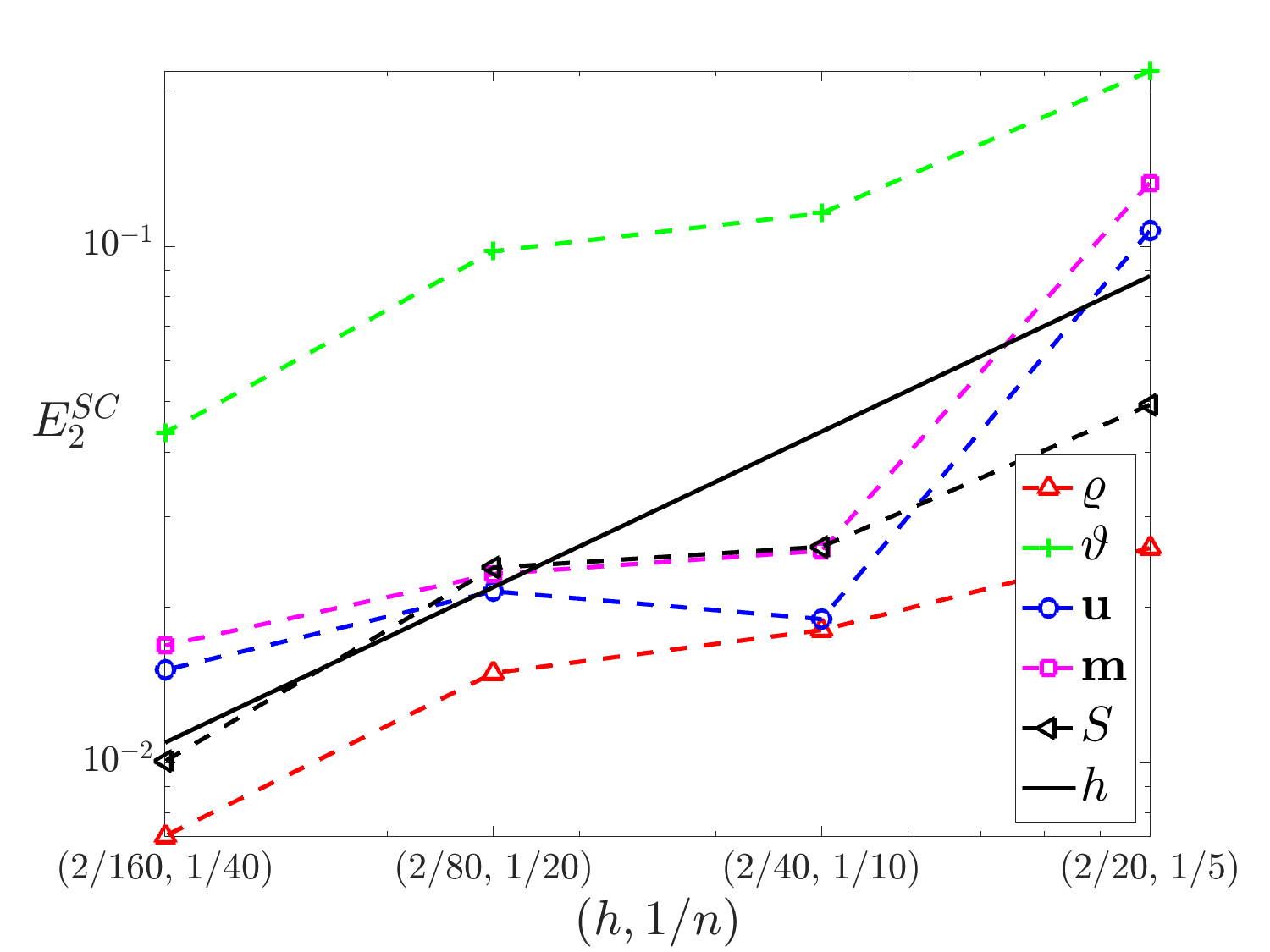}
	\
	\caption{  \small{{\bf SCFV total errors} of the expectation $E_1^{SC}(U,h,n(h))$ and the deviation $E_2^{SC}(U,h,n(h))$ with $U\in \{ \vr, \vm, S, \vu, \vt \}$ and $(h,1/n(h)) =(h,\mathcal{O}(h)) = (2/(20\cdot 2^l), \, 1/(5\cdot 2^l)), \,  l =0,\dots,3.$
	The black solid  line without any marker denotes the reference line $h$. }
}\label{fig-SCFV-Err-2} %{\cred please use different line style for the two solid lines, as they look the same when using a black-white printer}
\end{figure}

%%%%%%%%%%%%%%%%%%%%
\newpage
\subsection{Monte Carlo finite volume simulations}
Let us consider the following uncertain initial/boundary data:
\begin{align*}
& \vr_D(x) = 1.2 +  (1+Y_1(\omega)) \sin \left( \frac{\pi x_2}2\right), \\
& \vt_D(x) = a + b \, x_2 +  c P(x_1)\sin(\pi x_2) + Y_2(\omega) \sin(\pi x_1)\sin\left( \frac{\pi (x_2+1)}4\right), \\
&  \mbox{implying} \quad \vt_B|_{x=(\cdot,1)} =  1 +Y_2(\omega) \sin(\pi x_1). %, \quad \omega_1, \ \omega_2 \ \overset{i.i.d.}{\sim}\ \mathcal{U}\left( -0.1, 0.1\right).
\end{align*}
Here $Y_1,Y_2$ are random variables that are uniformly distributed, $(Y_1,Y_2) \sim {\cal U}([-0.1,0.1]^2)$.
%Here $\omega = (\omega_1, \omega_2) \in \Omega \equiv [-0.1, 0.1]^2$ and the probability measure is $\mathcal{P} = 25 {\rm d}y_1  {\rm d}y_2$.
In Figure~\ref{fig-R} we present the numerical solutions obtained with deterministic data, as well as the expected value of the Monte Carlo finite volume (MCFV) approximations.
The deterministic solutions are obtained with deterministic data (i.e.~$\omega \equiv 0$), see also Figure \ref{fig-D-1}.
The MCFV solutions computed on a grid with mesh size  $h= 2/320$ using $M=500$ Monte Carlo samples. The final time was set to $T = 8$.
%are obtained with $M_{ref}=500$ Monte Carlo samples and mesh size $h_{ref} = 2/320$ at time $T = 8$.
Figures \ref{fig-R-1} and \ref{fig-R-4} present expected values and deviations of  the random numerical solutions $\vr$ and $\vt$.

\

Next, to illustrate our theoretical results, cf. Theorem \ref{TMC2}, experimentally, we test both the statistical and the total errors for the expected value and deviation of the MCFV approximations:
\begin{align}
&E_1^{MC}(U,h,M) =  \frac1{K} \sum_{k=1}^K \left(  \left\| \frac{1}{M} \sum_{m = 1}^M U_h^{k,m} (T, \cdot)- \expe{ U (T, \cdot) }  \right\|_{L^1(Q)}  \right),
\\
& E_2^{MC}(U,h,M) =
\frac1{K} \sum\limits_{k=1}^K \left(   \left\| \frac 1 M \sum\limits_{m=1}^M \Big| U_h^{k,m}(T, \cdot)   - \frac{1}{M}  \sum\limits_{j=1}^M U_h^{k,j}(T, \cdot)   \Big| \ -  \Dev{U(T, \cdot) }  \right\|_{L^1(Q)}  \right),
\end{align}
where $U_h^{k,m} = U_h[D_{k,m}], U \in \{ \vr, \vm, S, \vu, \vt \}$ is the MCFV approximation associated to the deterministic discrete sample $D_{k,m}$.
%the $k$-th realization of the $m$-th i.i.d. copy as sample
In the results presented below set $K$ to $20$.
Note that the exact expectation $\expe{U(T, \cdot) }$ and deviation $\Dev{U(T, \cdot) }$ are approximated by the numerical solutions on the finest grid (with $h_{ref}=2/320$) emanating from $M_{ref}=500$ samples.

In Figure \ref{fig-R-Err-1} we present statistical behaviour of the errors $E_i^{MC}(U,h_{ref},M), i=1,2,$ by fixing $h=h_{ref}$ and choosing $M=5\cdot 2^l, l =0,\dots,3$.
In Figure \ref{fig-R-Err-2} we illustrate the total errors $E_i^{MC}(U,h,M(h)), i=1,2,$ with respect to the pair $$(h,1/M(h)) = (2/(20\cdot 2^l), \, 1/(5\cdot 2^l)), \quad l =0,\dots,3.$$
% Figure \ref{fig-R-Err-1}  gives the statistical $L^1$-errors of $\vr,\vm,S,\vu,\vt$ with respect to $N=5\cdot 2^{n}, n =0,\dots,3$ for a fixed mesh size $h=h_{ref}$.
% And the total errors with respect to the pair $(h,1/N) = (h,\mathcal{O}(h))$
% \begin{align*}
% h= 2/(20\cdot 2^n), \quad N = 5\cdot 2^{n}, \quad n =0,\dots,3
% \end{align*}
% are presented in Figure \ref{fig-R-Err-2}.
The numerical results confirm that the statistical errors  converge with the rate of $1/2$ with respect to $M^{-1}$.
The total errors with respect to the pair $(h,1/M) = (h,\mathcal{O}(h))$ converge with a rate between $1/2$ and $1$.

\begin{figure}[htbp]
	\setlength{\abovecaptionskip}{0.cm}
	\setlength{\belowcaptionskip}{-0.cm}
	\centering
	\includegraphics[width=\textwidth]{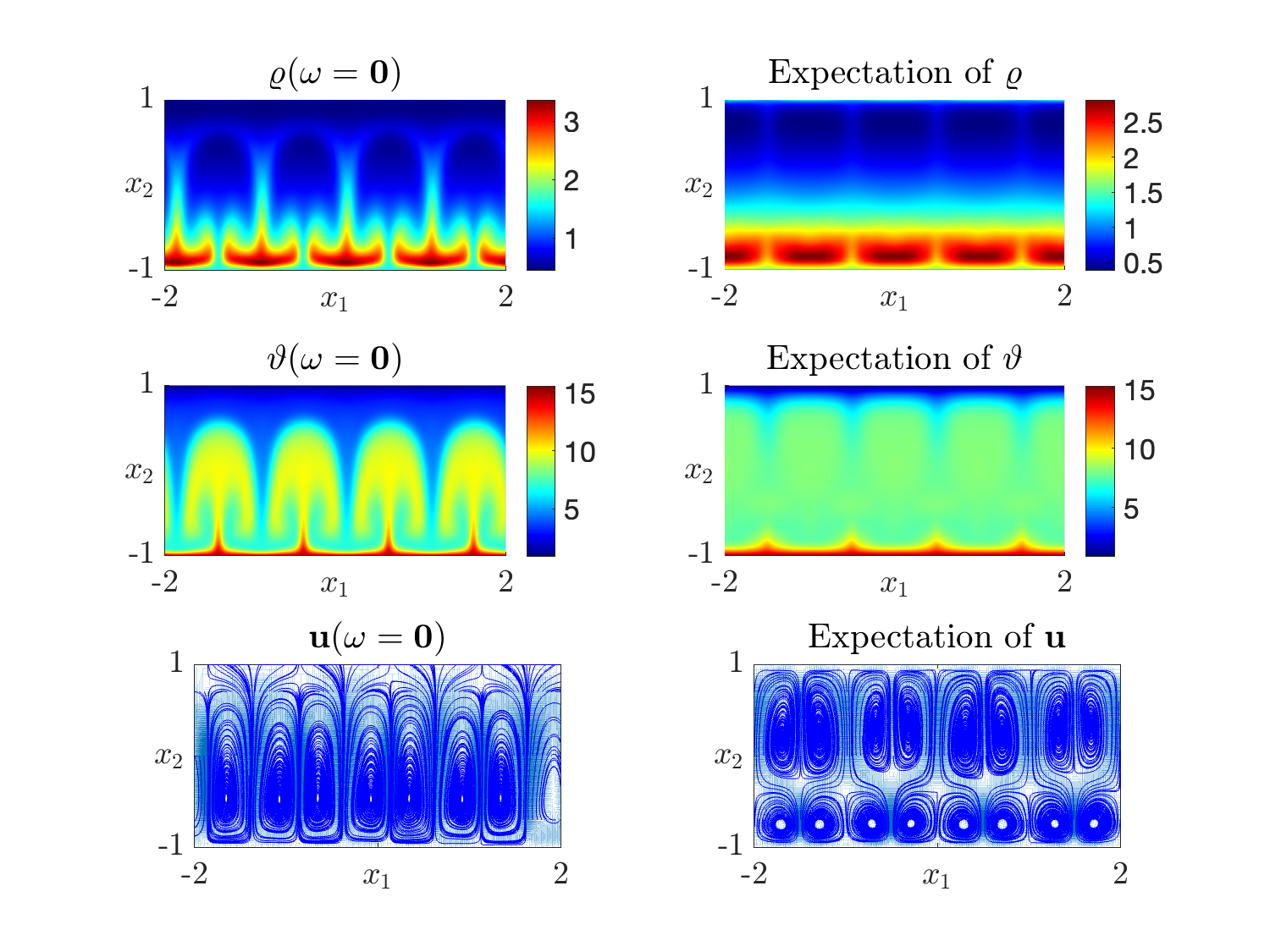}
	\caption{  \small{Rayleigh--B\' enard experiment:  the deterministic FV solution %obtained with $\omega \equiv \vc{0}$
	(left) and the {\bf MCFV  solutions} %obtained with $\omega \in \Omega \equiv [-0.1,0.1]^2, \mathcal{P} = 25 {\rm d} y_1 {\rm d} y_2$
	(right)  at time $T = 8$ on a mesh with $640\times 320$ cells. From top to bottom: density (top); temperature (middle); streamline (bottom).  }}\label{fig-R}
\end{figure}

\begin{figure}[htbp]
	\centering
	\begin{subfigure}{0.48\textwidth}
		\includegraphics[width=\textwidth]{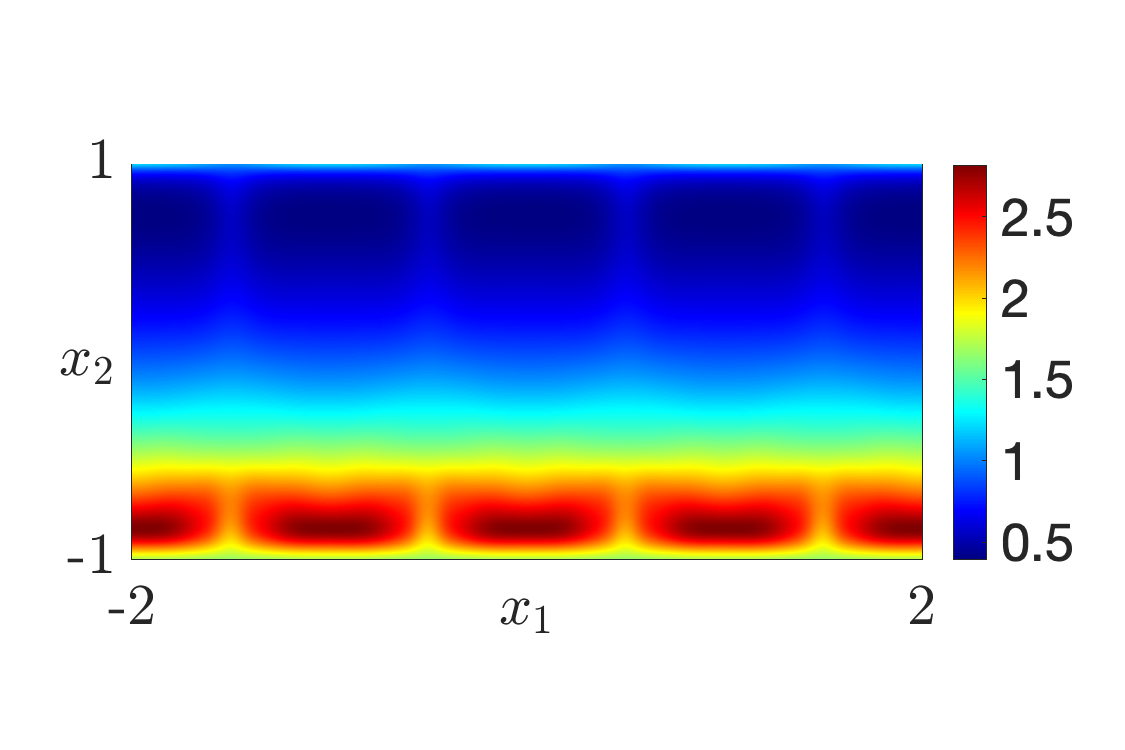}
		\vspace{-1.cm}
		\caption{ \bf $\vr$ - Expectation}
	\end{subfigure}	
	\begin{subfigure}{0.48\textwidth}
		\includegraphics[width=\textwidth]{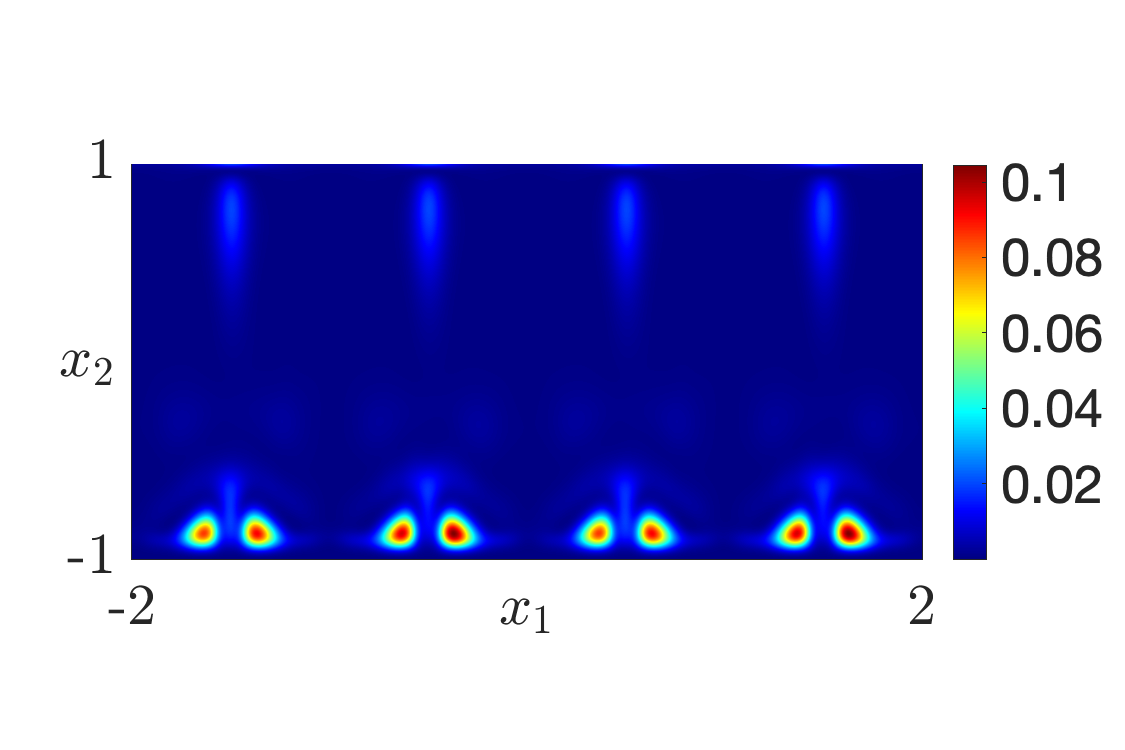}
				\vspace{-1.cm}
\caption{ \bf $\vr$ - Variance }
	\end{subfigure}	\\
	\begin{subfigure}{0.45\textwidth}
		\includegraphics[width=\textwidth]{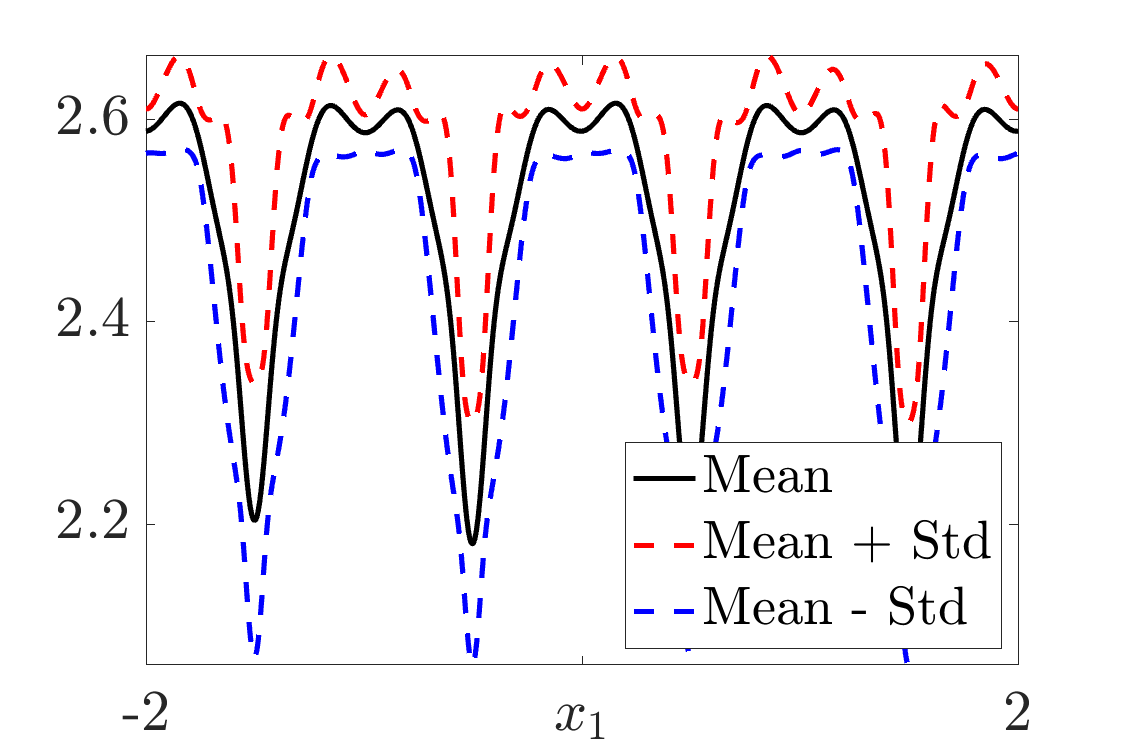}
		\caption{ \bf $\vr$ at $x_2 =  -3/4$ }
	\end{subfigure}	
%	\begin{subfigure}{0.32\textwidth}
%		\includegraphics[width=\textwidth]{img-random-MCFV/rhoPRONUM52MN500MX640MY320L2}
%		\caption{ \bf $\vr$ at $x_2 =  0$ }
%	\end{subfigure}	
	\begin{subfigure}{0.45\textwidth}
		\includegraphics[width=\textwidth]{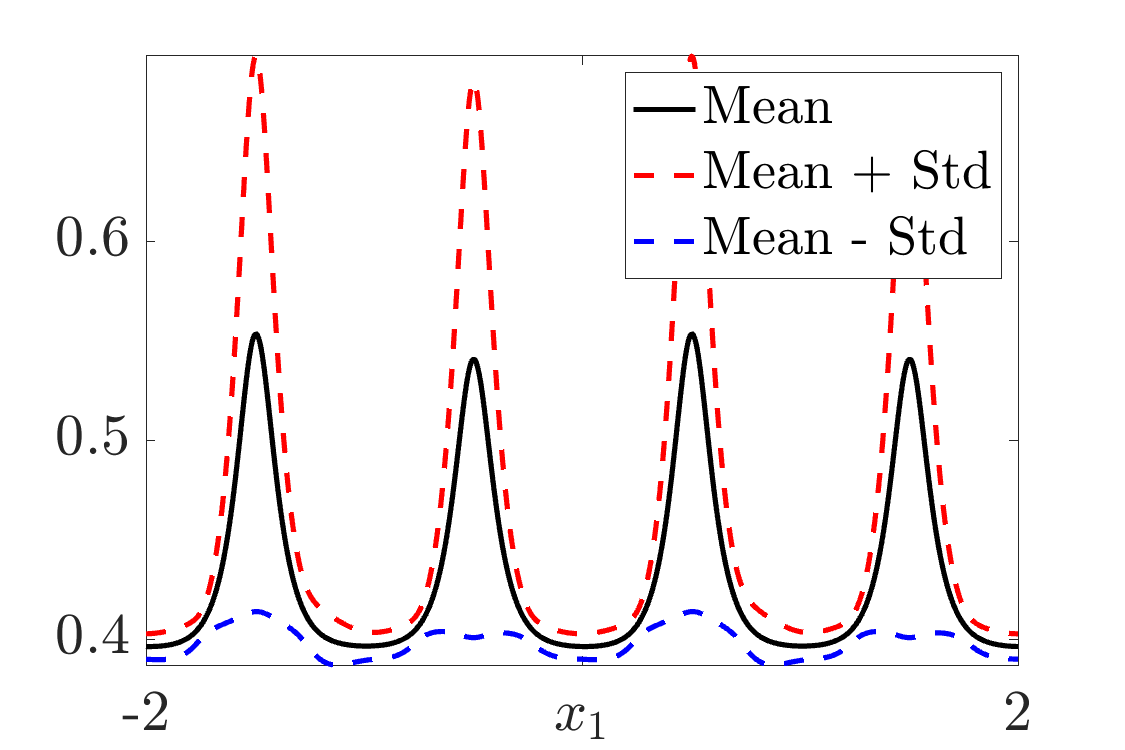}
		\caption{ \bf $\vr$ at $x_2 =  3/4$ }
	\end{subfigure}	
	\caption{  \small{{\bf MCFV solutions $\vr$} obtained with $500$ samples on a mesh with $640 \times 320$ cells. Top:  expectation (left) and variance (right) of $\vr$. Bottom: from left to right: $\vr$ at  lines $x_2 =  -3/4$ and $x_2 =3/4$.}}\label{fig-R-1}
\end{figure}

\begin{figure}[htbp]
	\centering
	\begin{subfigure}{0.48\textwidth}
		\includegraphics[width=\textwidth]{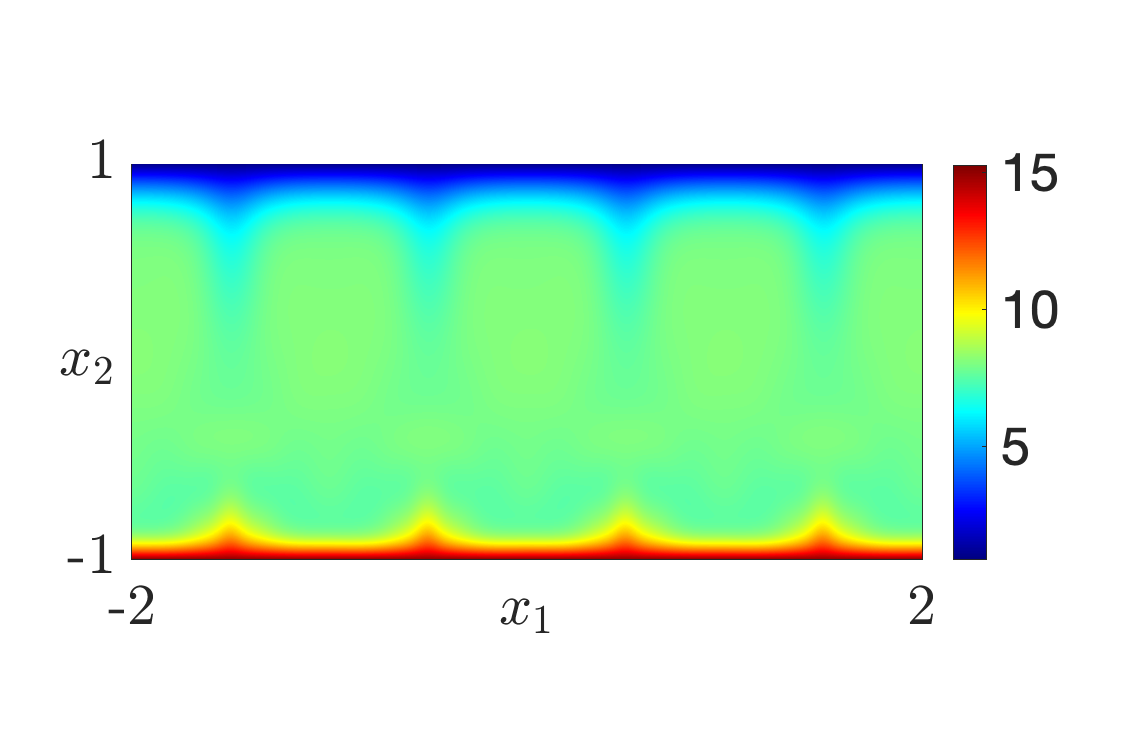}
				\vspace{-1.cm}
\caption{ \bf $\vt$ - Expectation}
	\end{subfigure}	
	\begin{subfigure}{0.48\textwidth}
		\includegraphics[width=\textwidth]{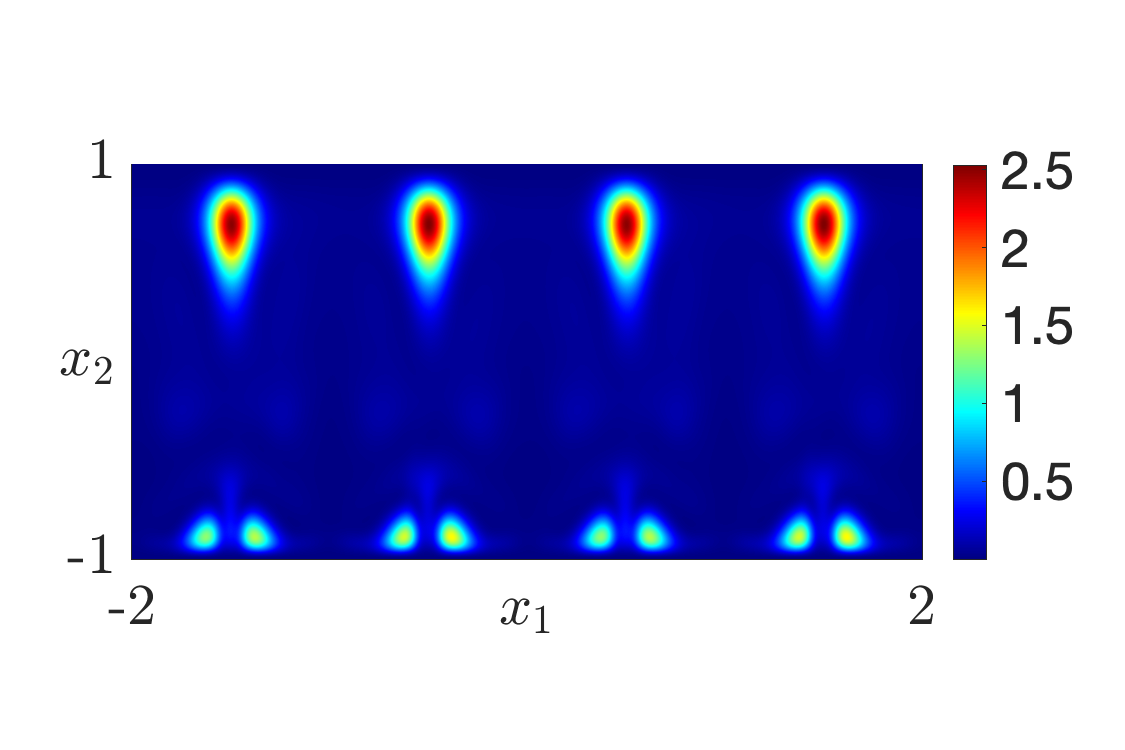}
				\vspace{-1.cm}
\caption{ \bf $\vt$ - Variance }
	\end{subfigure}	\\
	\begin{subfigure}{0.45\textwidth}
		\includegraphics[width=\textwidth]{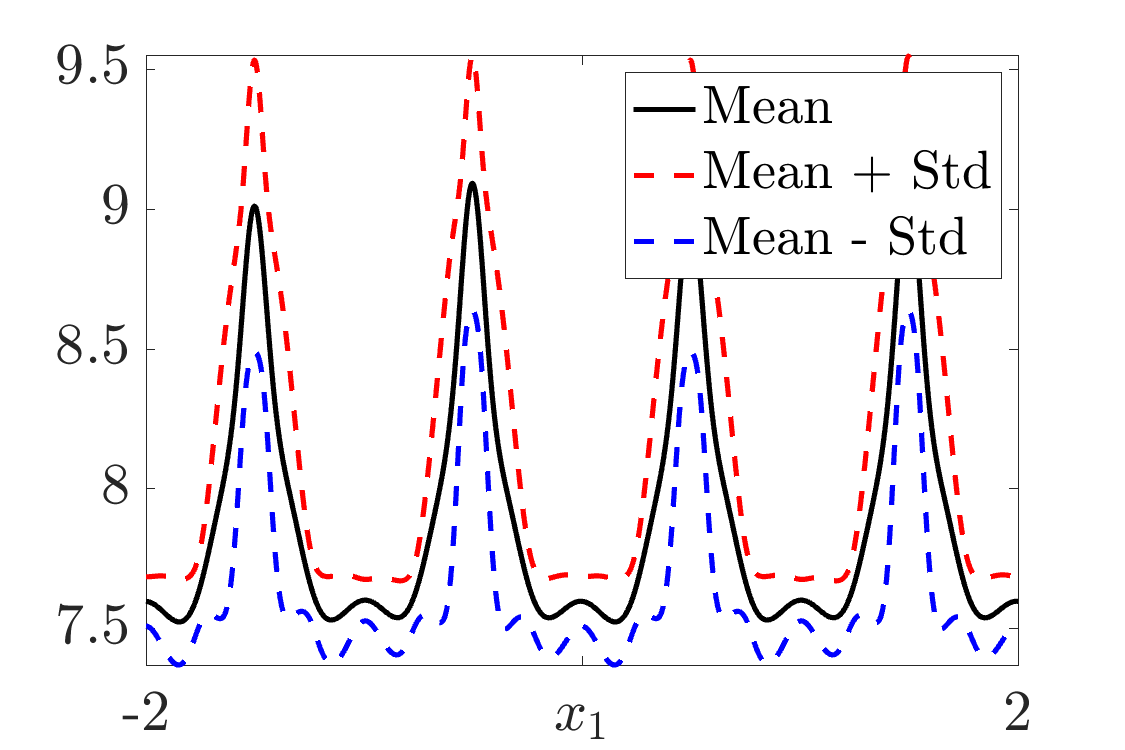}
		\caption{ \bf $\vt$ at $x_2 =  -3/4$ }
	\end{subfigure}	
%	\begin{subfigure}{0.32\textwidth}
%		\includegraphics[width=\textwidth]{img-random-MCFV/temperaturePRONUM52MN500MX640MY320L2}
%		\caption{ \bf $\vt$ at $x_2 =  0$ }
%	\end{subfigure}	
	\begin{subfigure}{0.45\textwidth}
		\includegraphics[width=\textwidth]{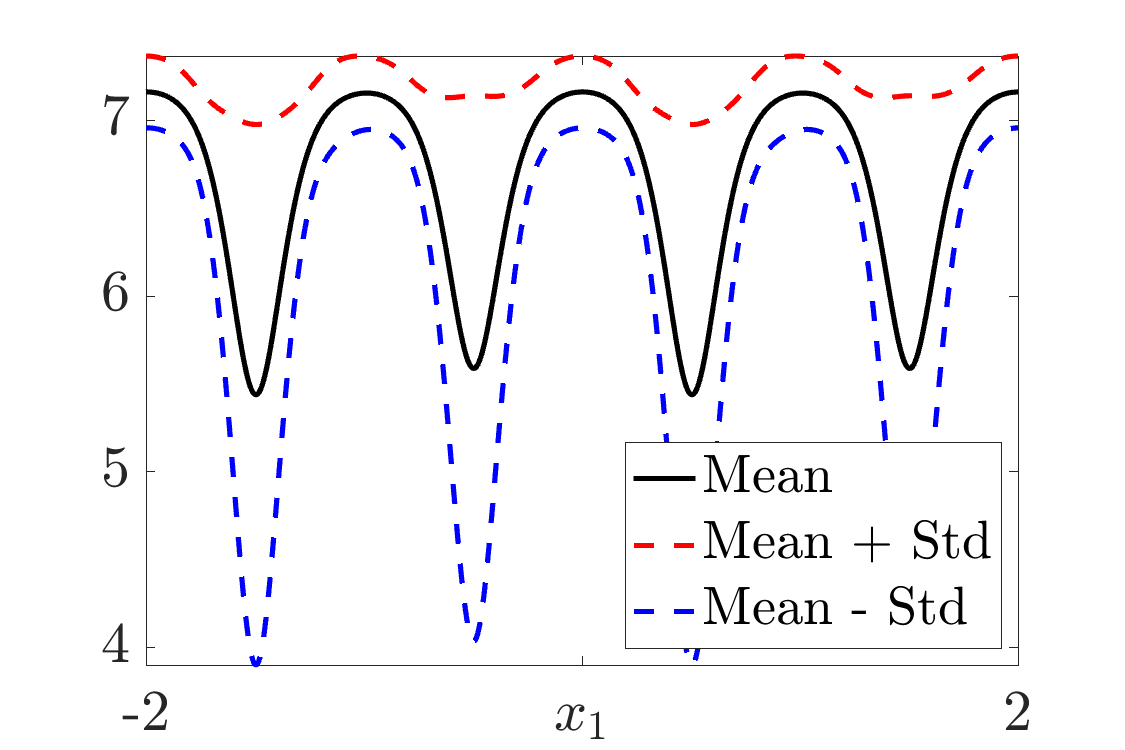}
		\caption{ \bf $\vt$ at $x_2 =  3/4$ }
	\end{subfigure}	
	\caption{  \small{{\bf MCFV solutions $\vt$} obtained with $500$ samples on a mesh with $640 \times 320$ cells. Top:  expectation (left) and variance (right) of $\vt$. Bottom: from left to right: $\vt$ at  lines $x_2 =  -3/4$ and $x_2 =3/4$.}}\label{fig-R-4}
\end{figure}

\begin{figure}[htbp]
	\setlength{\abovecaptionskip}{0.1cm}
	\setlength{\belowcaptionskip}{-0.cm}
	\centering
	\includegraphics[width=0.49\textwidth]{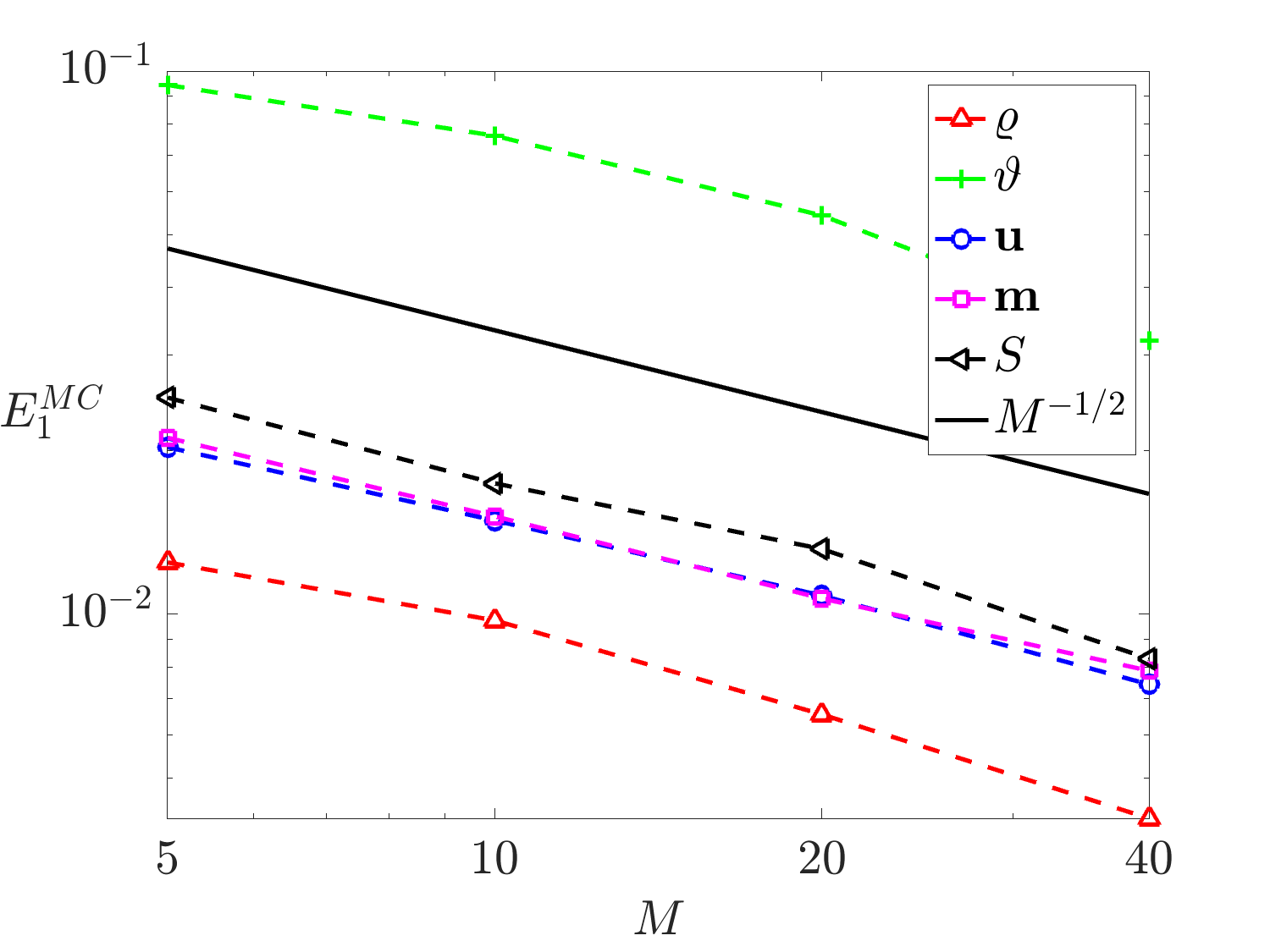}
	\includegraphics[width=0.49\textwidth]{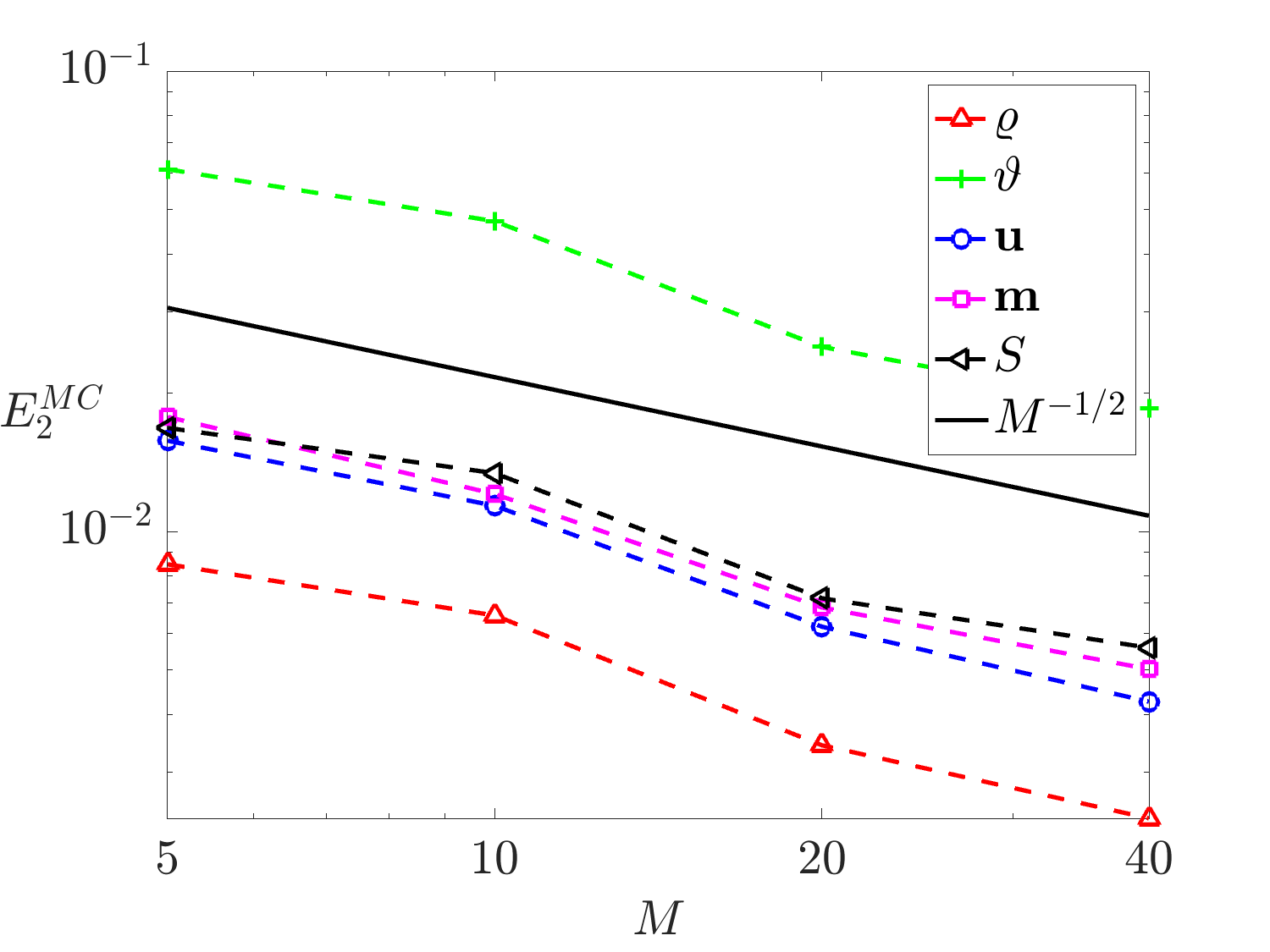}
	\
	\caption{  \small{{\bf MCFV statistical errors} of the expectation $E_1^{MC}(U,h_{ref},M)$ and the deviation $E_2^{MC}(U,h_{ref},M)$ obtained with $U\in \{ \vr, \vm, S, \vu, \vt \}$, $h_{ref} = 2/320$ and $M = 5 \cdot 2^l, l = 0, \dots, 3$.
	The black solid line without any marker denote the reference line of $M^{-1/2}$.  }}
	\label{fig-R-Err-1}
\end{figure}

\begin{figure}[htbp]
	\setlength{\abovecaptionskip}{0.1cm}
	\setlength{\belowcaptionskip}{0.cm}
	\centering
		\includegraphics[width=0.49\textwidth]{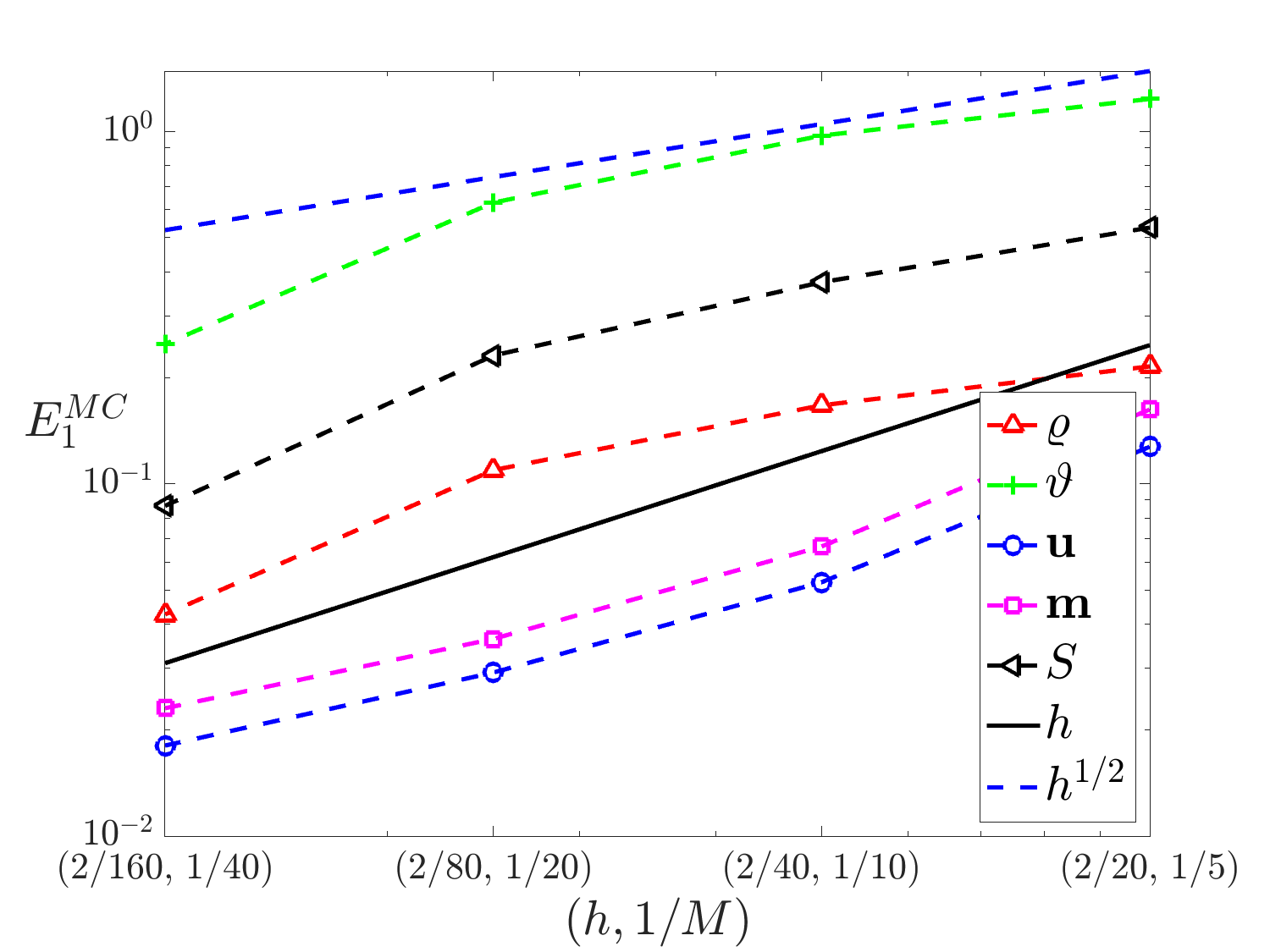}
		\includegraphics[width=0.49\textwidth]{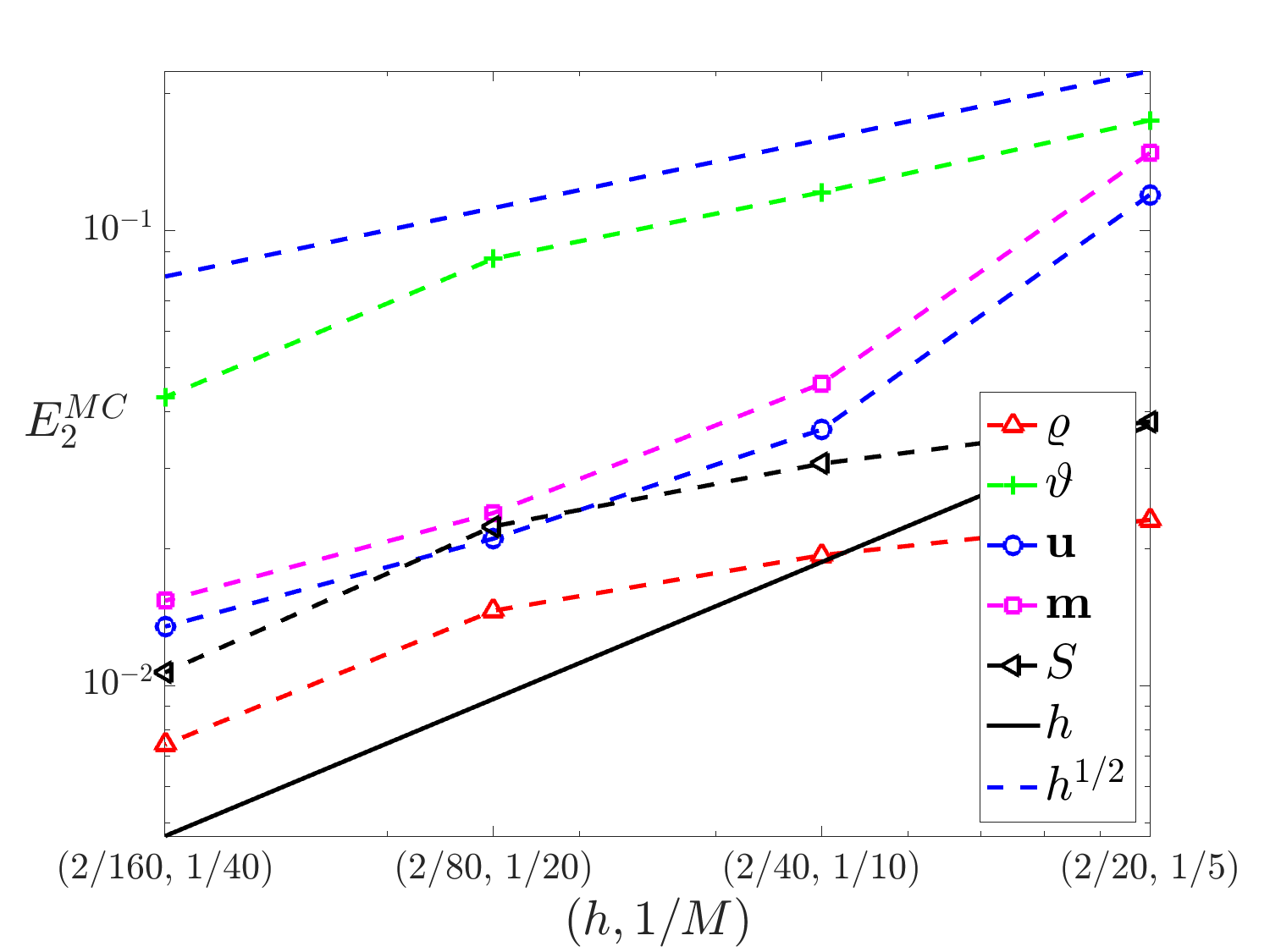}
	\caption{  \small{{\bf MCFV Total errors} of the expectation $E_1^{MC}(U,h,M(h))$ and the deviation $E_2^{MC}(U,h,M(h))$ with $U\in \{ \vr, \vm, S, \vu, \vt \}$ and $(h,1/M(h)) =(h,\mathcal{O}(h)) = (2/(20\cdot 2^l), \, 1/(5\cdot 2^l)), \, l =0,\dots,3.$
	The blue dashed and black solid  lines without any marker denote the reference lines of $h^{1/2}$ and $h$, respectively. }
}\label{fig-R-Err-2} %{\cred please use different line style for the two solid lines, as they look the same when using a black-white printer}
\end{figure}

\vfill \eject

\bibliographystyle{plain}
%\bibliography{citace}
\def\cprime{$'$} \def\ocirc#1{\ifmmode\setbox0=\hbox{$#1$}\dimen0=\ht0
  \advance\dimen0 by1pt\rlap{\hbox to\wd0{\hss\raise\dimen0
  \hbox{\hskip.2em$\scriptscriptstyle\circ$}\hss}}#1\else {\accent"17 #1}\fi}

%%%%%%%%%%%%%%%
\appendix
\section{Numerical method} \label{AA}
The aim of this section is to present the numerical method used for simulations in Section~\ref{NumSim}.  We prove that this method yields a bounded consistent approximation.
%%for the   Rayleigh--B\' enard problem sitting on a period strip domain $Q= \mathbb{T}^{d-1} \times [-1,1]$.
%For completeness, in what follows we present the finite volume (FV) method used in numerics, cf. Section~\ref{NumSim}, %and prove its bounded consistent property. Due to the setting of the Rayleigh--B\' enard problem, let us simplify the %fluid domain  as

\subsection{Notations}
\paragraph{\bf Mesh.}
Let $Q= \mathbb{T}^{d-1} \times [-1,1]$ be a periodic strip domain. It is discretized by an
uniform structured (square for $d=2$ or cuboid for $d=3$) mesh
$\grid_h$ with $h\in(0,1)$ being a mesh parameter.
The set of all faces of $\grid_h$ is denoted by $\faces$,  $\facesext = \faces \cap \partial Q$ and $\facesint = \faces \setminus \facesext$ stand for the set of all exterior and interior faces, respectively.
We denote %by $\faces$ the set of all faces of $\grid$
by $\facesi$, $i=1,\dots, d$, the set of all faces that are orthogonal to $\ve_i$ -- the basis vector of the canonical system.
Moreover, we denote by $\facesK$ the set of all faces of a generic element $K\in\grid_h$ and write $\sigma= K|L$ if $\sigma \in \facesint$ is the common face of neighbouring elements $K$ and $L$. Further, we denote by $x_K$ and $x_\sigma$ the barycenter of an element $K$ and a face $\sigma$, respectively.

\paragraph{\bf Dual mesh.} For any $\sigma=K|L \in \facesint$, we define a dual cell $D_\sigma := D_{\sigma,K} \cup D_{\sigma,L}$, where $D_{\sigma,K}$ is defined as
\begin{align}\label{eqco}
&D_{\sigma,K} = \left\{ x \in K \mid x_i\in\co{(x_K)^{(i)}}{(x_\sigma)^{(i)}}\right\} \mbox{ for any } \sigma \in \facesi, \; i=1,\ldots,d, \br
&\co{A}{B} \equiv [ \min\{A,B\} , \max\{A,B\}],
\end{align}
where the superscript $(i)$ denotes the $i$-th component of a vector.
Note that $D_{\sigma} = D_{\sigma,K}$ if $\sigma \in \facesext \cap \facesK$.

\paragraph{\bf Operators.}
Let $Q_h$ be the set of all piecewise constant functions on the grid $\grid_h$ and $Q_h^m$ be the corresponding $m$-dimensional function space, $m \in \mathbb{N}$. Then we can  introduce following projection operators
\begin{align*}
&\Pim \phi (x) = \sum_{K \in \grid_h} \frac{\mathds{1}_{K}(x)}{|K|} \int_K \phi \dx \quad
\mbox{for any } \phi\in L^1(Q),
\br
&\Piw \phi (x) = \sum_{\sigma \in \faces} \frac{\mathds{1}_{\sigma}(x)}{|\sigma|} \int_{\sigma} \phi \ds \quad
\mbox{for any } \phi\in W^{1,1}(Q).% \quad \mbox{for any }  x \in \faces,
\end{align*}
Further, we introduce the following discrete differential operators
\begin{align*}
\Gradh r_h & = \sum_{K \in \grid_h}  (\Gradh r_h)_K \mathds{1}_K,  \quad
(\Gradh r_h)_K = \sum_{\sigma\in \pd K} \frac{|\sigma|}{|K|} \avs{r_h} \vn \quad \mbox{for } r_h \in Q_h,
\\
\Divh \vvh  &= \sum_{K \in \grid_h}  (\Divh  \vvh)_K \mathds{1}_K, \quad
(\Divh \vvh)_K = \sum_{\sigma\in \pd K} \frac{|\sigma|}{|K|} \avs{\vvh} \cdot \vn
\quad \mbox{for } \vv_h \in Q_h^d,
\\
\Gradd r_h(x) & = \sum_{\sigma\in\faces} \left(\Gradd r_h \right)_{\sigma}\mathds{1}_{D_\sigma}{(x)}, \quad \left(\Gradd r_h\right) _{\sigma} =\frac{\jump{r_h} }{ h } \vc{n}  \quad \mbox{for } r_h \in Q_h,
\end{align*}
where $\vc{n}$ is the outer normal vector to $\sigma$ and for $r_h \in Q_h, v \in Q_h, x\in \sigma$
\begin{align*}
\avs{r_h} = \frac{r_h^{\rm in} + r_h^{\rm out} }{2},\quad
\jump{r_h }  = r_h^{\rm out} - r_h^{\rm in}, \quad
v^{\rm out}(x) = \lim_{\delta \to 0+} v(x + \delta \vc{n}),\quad
v^{\rm in}(x) = \lim_{\delta \to 0+} v(x - \delta \vc{n}).
\end{align*}
Moreover, we define the upwind and downwind quantities of $v \in Q_h$ associated to the velocity field $\vu\in Q_h^d$ at a generic face $\sigma$
\begin{equation*}
(v^{\rm up}, v^{\rm down}) =
\begin{cases}
( v^{\rm in},v^{\rm out}) & \mbox{if} \ \avs{\bm{u}} \cdot \vc{n} \geq 0, \\
(v^{\rm out},v^{\rm in}) & \mbox{if} \ \avs{\bm{u}} \cdot \vc{n} < 0.
\end{cases}
\end{equation*}

\paragraph{Time discretization.}
Given a time step $\TS>0$, $\TS={\cal O}(h)$, we divide the time interval $[0,T]$ into $N_T=T/\TS$ uniform parts, and denote $t^k= k\TS.$
Moreover, we denote by $v_h^k \in Q_h$ a piecewise constant approximation of the function $v$ at time $t^k$.
We denote by $v_h(t)\in L_{\TS}(0,T; Q_h)$ the piecewise constant approximation in time, such that
\begin{align*}
v_h(t, \cdot) =v_h^0 \ \mbox{ for } \ t < \Delta t, \quad v_h(t,\cdot)=v_h^k \ \mbox{ for } \ t\in [k\TS,(k+1)\TS), \ \ k=1,\dots, N_T.
\end{align*}
Furthermore, we define the discrete time derivative by a backward difference formula
\[
D_t v_h(t) = \frac{v_h (t) - v_h^\triangleleft}{\TS} \quad \mbox{with} \quad v_h^\triangleleft = v_h (t - \TS).
\]

\subsection{Finite volume method}
We are now ready to propose an FV method with an upwind numerical flux.
\begin{Definition}[{{\bf Finite volume method}}]
Let $(\vrh^0,\vuh^0, \vth^0) :=\Pim (\vr_0, \vu_0, \vt_0)$ and $\vthB = \Piw \vtB$.
We say that
$(\vrh,\vuh, \vth) \in L_{\TS}(0,T; Q_h^{d+2})$ is a finite volume approximation of the Navier--Stokes-Fourier problem if it solves the following system of algebraic equations:
\begin{subequations}\label{VFV}
\begin{align}\label{VFV_D}
&\intQ{ D_t \vrh  \phi_h } - \intfacesint{  \Fup (\vrh ,\vuh )
\jump{\phi_h}   }  = 0 \hspace{4.5cm} \mbox{ for all }\ \phi_h \in Q_h, \\
 \label{VFV_M}
&\intQ{ D_t  (\vrh  \vuh ) \cdot \bfphi_h } - \intfacesint{ {\bf F}_h^{\alpha}  (\vrh  \vuh ,\vuh ) \cdot \jump{\bfphi_h}   } + \intQ{ (\bS_h -p_h \bI ) : \bD_h \bfphi_h }
\br
&\hspace{5.5cm}
= \intQ{\vrh \vc{g} \cdot  \bfphi_h}
\mbox{ for all } \bfphi_h \in Q_h^d, \quad \avs{\bfphi_h}_{\sigma} = 0, \ \sigma \subset \partial Q,
\\ \label{VFV_E}
&c_v\intQ{ D_t (\vrh  \vth ) \phi_h } - c_v\intfacesint{  \Fup (\vrh \vth ,\vuh )\jump{\phi_h} }+\intfacesint{  \frac{\kappa}{ h } \jump{\vth}  \jump{ \phi_h}  }
\br
&\hspace{2cm}
+ 2\intfacesext{  \frac{\kappa}{ h } \left( \vt_h^{\rm in} - \vthB \right)  \phi_h^{\rm in}  }
= \intQ{ (\bS_h -p_h \bI ) : \Gradh \vuh \phi_h} \quad \mbox{for all}\ \phi_h \in Q_h,
\end{align}
\end{subequations}
where $\Fup (r_h,\vuh)$ is the diffusive upwind flux taken as
\begin{equation*}%\label{num_flux}
\Fup (r_h,\vuh)
={\Up}[r_h, \vuh] - \muh \jump{ r_h }, \quad \alpha >-1,
\quad \mbox{ with } \  \ \Up [r_h, \bm{u}_h] = r_h^{\rm up} \avs{\vu_h} \cdot \vn,
\end{equation*}
and $\bS_h = 2\mu \bD_h \vuh + \lambda \Divh\vuh \bI, \; \Dhuh = (\Gradh \vu_h+\Gradh^t \vu_h)/2$  with the  boundary conditions
\[
\avs{\vth}_{\sigma } = \vthB, \quad \avs{\vuh}_{\sigma } = 0, \quad \sigma \in \facesext.
\]
\end{Definition}
\medskip

The following properties of the FV method \eqref{VFV} can be proved analogously as in \cite{BLMSY,FeLMMiSh,FLMS_FVNSF,LSY_penalty}.

%Furthermore, analogously to \cite{BLMSY,FLMS_FVNSF,LSY_penalty} we have the following {\em unconditional}  stability of the FV scheme \eqref{VFV}.

\begin{Lemma}\label{vfv_pro}
Let $(\vrh ,\vuh ,\vth )$ be a numerical solution obtained by the finite volume method \eqref{VFV}. Then we have
\begin{itemize}
\item {\bf Positivity of the density and temperature}.
\begin{align}\label{Pro_P}
\vrh(t) > 0, \quad \vth(t) > 0 \quad \mbox{for all } \ t\in(0,T).
\end{align}

\item {\bf Mass conservation}.
\begin{align}\label{Pro_M}
\intQ{\vrh(t)} = \intQ{\vrh(0)} = \intQ{\vr_0}   \quad \mbox{for all } \ t\in(0,T).
\end{align}

\item {\bf Energy balance}.
\begin{align}\label{energy_stability}
D_t \intQ{ \left(\frac{1}{2} \vrh |\vuh |^2 + { c_v \vrh \vth } \right) }
+ 2\intfacesext{  \frac{\kappa}{ h } \left( \vt_h^{\rm in} - \vt_B \right)   }  - \intQ{\vrh \vc{g} \cdot  \vuh}  = - D_{\rm E},
\end{align}
where $D_{\rm E} \geq 0$ is the numerical dissipation given by
\begin{align}\label{energy_dissipation}
D_{\rm E} = \frac{\TS}{2} \intQ{ \vrh^\triangleleft|D_t \vuh |^2 } + h^\alpha \intfacesint{ \avs{ \vrh } \abs{\jump{\vuh}}^2 }
+ \frac12 \intfacesint{ \vrh^{\rm up} |\avs{\vuh } \cdot \vc{n} | \abs{\jump{ \vuh }}^2 }.
\end{align}

\item {\bf Entropy balance}.
\begin{multline}\label{eq_entropy_stability}
\intQ{ D_t \left(\vrh s_h \right) \phi_h } -
\intfacesint{ \Up(\vrh s_h , \vuh ) \jump{\phi_h} }
- \intQ{ \frac{\phi_h}{\vth } \difuh }  + \intfacesint{ \frac{\kappa}{h} \jump{\vth } \jump{\frac{\phi_h}{\vth } } }
\\
+ 2 \frac{\kappa}{ h }  \intfacesext{ \frac{ \vt_h^{\rm in} - \vthB }{\vt_h^{\rm in}}  \phi_h^{\rm in} }
= D_s(\phi_h) + R_{s}(\phi_h)
\end{multline}
for any $\phi_h \in Q_h$, where $D_s(\phi_h) = \sum_{i=1}^3 D_{s,i}(\phi_h)$ and $R_{s}(\phi_h)$ are given by
\begin{equation}\label{entropy_dissipation}
\begin{aligned}
D_{s,1}(\phi_h) := & \ \TS \intQ{ \left( \frac{ | D_t \vr _h |^2 }{2 \xi _{\vr,h} } +\frac{c_v \vrh^\triangleleft |D_t \vth |^2 }{2 |\xi_{\vt,h} |^2 } \right) \phi_h},
\\
D_{s,2}(\phi_h) := & \ \frac12 \intfacesint{ \phi_h^{\rm down} \abs{ \avs{\vuh}\cdot \vc{n} } \jump{ (\vrh, p_h) } \cdot \nabla_{(\vr,p)}^2 (-\vr s)|_{\vv_1^*}\cdot \jump{ (\vrh, p_h) } },
\\
D_{s,3}(\phi_h) :=
& \ h^\alpha \intfacesint{ \avs{\phi_h} \jump{ (\vrh, p_h) } \cdot \nabla_{(\vr,p)}^2 (-\vr s)|_{\vv_2^*}\cdot \jump{ (\vrh, p_h) }},
\\
R_{s}(\phi_h) := & \ h^\alpha \intfacesint{ \jump{\phi_h} \cdot \bigg( \avs{ \nabla_\vr(-\vrh s_h) } \jump{ \vrh}
	+ \avs{\nabla_p(-\vrh s_h)} \jump{ p_h}\bigg)},
\\
\xi _{\vr,h}\in &\co{\vrh^\triangleleft}{\vrh},\xi _{\vt,h}\in \co{\vth^\triangleleft}{\vth}, \vv_1^*, \vv_2^* \in \co{(\vrh^{\rm in}, p_h^{\rm in})}{(\vrh^{\rm out}, p_h^{\rm out})}.
\end{aligned}
\end{equation}
%with
%$ \xi _{\vr,h}\in \co{\vrh^\triangleleft}{\vrh},\xi _{\vt,h}\in \co{\vth^\triangleleft}{\vth}, \vv_1^*, \vv_2^* \in \co{(\vrh^{\rm in}, p_h^{\rm in})}{(\vrh^{\rm out}, p_h^{\rm out})}$, and $\co{\cdot}{\cdot}$ given in \eqref{eqco}.
%with $\xi _{\vr,h}\in \co{\vrh^\triangleleft}{\vrh},\xi _{\vt,h}\in \co{\vth^\triangleleft}{\vth}, \vv_1^*, \vv_2^* \in \co{(\vrh^{\rm in}, p_h^{\rm in})}{(\vrh^{\rm out}, p_h^{\rm out})}$
%and $\xi _{\vr,h},\xi _{\vt,h}$ given in Lemmas~\ref{lem_r2} and \ref{lem_r3}, respectively.
Moreover, it holds that $D_s(\phi_h) \geq 0$ for any $ \phi_h \geq 0$.

\item  {\bf Ballistic energy balance}.
\begin{align}\label{BE0}
&D_t \intQ{ \left(\frac{1}{2} \vrh |\vuh |^2 + c_v \vrh \vth - \vrh s_h \phi_h \right) }
+ \intQ{ \frac{\phi_h}{\vth }\difuh } - \intfacesint{ \frac{\kappa}{h} \avs{\phi_h} \jump{\vth } \jump{\frac{1}{\vth } } }
%+ \kappa \int_{D_{\sigma}} { \frac{\avs{ \phi_h} }{\vth \vthout } \; \abs{\Gradd \vth}^2 } \dx
\br
& \hspace{1cm}
+2 \frac{\kappa}{ h }  \intfacesext{ \frac{ (\vt_h^{\rm in} - \vthB)^2 }{\vt_h^{\rm in}}   }
- \intQ{\vrh \vc{g} \cdot  \vuh}
+ D_s(\phi_h) + D_{\rm E}
\br
&=
-\intQ{ \vrh s_h ( D_t \phi_h + \vuh \cdot \Gradh \phi_h) }
+ \intfacesint{ \frac{\kappa}{h} \jump{\vth } \jump{\phi_h} \avs{ \frac{1}{\vth } } }
%+ \kappa \int_{D_{\sigma}} { \avs{ \frac1{\vth} } \Gradd \vth \cdot \Gradd \phi_h} \dx
+ R_{B} (\phi_h) - R_{s}(\phi_h)
\end{align}
for any $\phi_h \in W^{1,2}(0,T;Q_h), \jump{\phi_h}_{\sigma} = 0, \sigma \in \facesext$, where $R_{B} = R_{B,1} +R_{B,2}$ is given by
\begin{equation*}
%\label{DB}
\begin{aligned}
R_{B,1} (\phi_h) &=  2 \frac{\kappa}{ h }  \intfacesext{ \frac{ (\vt_h^{\rm in} - \vthB) (\phi_h^{\rm in} - \vthB) }{\vt_h^{\rm in}} }
+ \TS \intQ{ D_t( \vrh s_h) D_t \phi_h },
\\
R_{B,2} (\phi_h) &= \frac12 \intfacesint{ |\avs{\vuh} \cdot \vc{n}| \jump{\vrh s_h } \jump{ \phi_h} }
+ \frac14 \intfacesint{ \jump{\vuh} \cdot \vc{n} \jump{\vrh s_h } \jump{ \phi_h} }.
\end{aligned}
\end{equation*}
\end{itemize}
\end{Lemma}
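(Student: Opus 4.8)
The plan is to verify the five asserted properties one at a time, exploiting the algebraic structure of the scheme \eqref{VFV} together with standard discrete summation-by-parts and renormalization identities, following the pattern of \cite{FLMS_FVNSF,BLMSY}. I would dispatch the two purely algebraic statements first. \textbf{Mass conservation} \eqref{Pro_M} is immediate upon choosing the constant test function $\phi_h \equiv 1$ in \eqref{VFV_D}: the jump $\jump{\phi_h}$ vanishes on every interior face, the flux contribution drops out, and $\intQ{D_t \vrh} = 0$. \textbf{Positivity} \eqref{Pro_P} I would obtain cell-by-cell from the upwind structure of $\Fup$: rewriting \eqref{VFV_D} on a single element $K$ and isolating the new time level exhibits $\vrh$ at $t^{k+1}$ as a combination with nonnegative coefficients (guaranteed by the CFL-type restriction $\TS = \mathcal{O}(h)$ and $\alpha > -1$) of positive values at $t^k$, the artificial diffusion $-\muh\jump{\vrh}$ only reinforcing monotonicity; the temperature is treated analogously on \eqref{VFV_E}, where the right-hand side $\mathbb{S}_h : \Gradh\vuh$ is a nonnegative source, so $\vth > 0$ is preserved.

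Next comes the \textbf{energy balance} \eqref{energy_stability}. I would test the momentum scheme \eqref{VFV_M} with $\bfphi_h = \vuh$ and the internal-energy scheme \eqref{VFV_E} with $\phi_h \equiv 1$, then add. The discrete time derivative is handled by the identity $D_t(\vrh\vuh)\cdot\vuh = D_t\big(\tfrac12\vrh|\vuh|^2\big) + \tfrac{\TS}{2}\vrh^\triangleleft|D_t\vuh|^2$, valid in view of the discrete continuity equation, which produces both the kinetic-energy time derivative and the first term of $D_{\rm E}$. Summation by parts on the convective flux together with the upwind splitting yields the upwind dissipation in \eqref{energy_dissipation}, while the $\muh$-term supplies the remaining contribution. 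Crucially, the viscous and pressure-work terms built from $(\mathbb{S}_h - p_h\bI)$ appear with opposite signs in the two equations and cancel (using the symmetry of $\mathbb{S}_h$ so that $\mathbb{S}_h:\bD_h\vuh = \mathbb{S}_h:\Gradh\vuh$), leaving exactly \eqref{energy_stability} with $D_{\rm E}$ as in \eqref{energy_dissipation}.

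The \textbf{entropy balance} \eqref{eq_entropy_stability} is the main obstacle. Starting from \eqref{VFV_D} and \eqref{VFV_E} with the discrete entropy $s_h = c_v \log\vth - \log\vrh$, I would carry out a discrete renormalization in the conservative variables $(\vrh, p_h)$: the time-derivative and convective contributions are expanded by second-order Taylor formulas, the remainders being evaluated at intermediate states $\vv_1^*,\vv_2^* \in \co{(\vrh^{\rm in},p_h^{\rm in})}{(\vrh^{\rm out},p_h^{\rm out})}$. This is precisely the mechanism generating the dissipation terms $D_{s,1},D_{s,2},D_{s,3}$ and the remainder $R_s$ of \eqref{entropy_dissipation}. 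The sign $D_s(\phi_h)\geq 0$ for $\phi_h \geq 0$ then follows from the convexity of the map $(\vr,p)\mapsto -\vr s$, a consequence of thermodynamic stability; the delicate bookkeeping is keeping track of the Hessians $\nabla^2_{(\vr,p)}(-\vr s)$ at the intermediate states and matching each Taylor remainder exactly against the claimed $D_s$ and $R_s$.

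Finally, the \textbf{ballistic energy balance} \eqref{BE0} is assembled by subtracting the entropy identity \eqref{eq_entropy_stability}, tested with the admissible $\phi_h$, from the energy balance \eqref{energy_stability}. The discrete product rule for $D_t(\vrh s_h\phi_h)$ reconstructs the ballistic time derivative and contributes the temporal part of $R_{B,1}$; reorganizing the exterior-face integrals on $\facesext$ recombines them into the squared boundary term together with the boundary part of $R_{B,1}$, while the interior convective remainders collect into $R_{B,2}$. The principal difficulty throughout remains the entropy step: ensuring that the Taylor remainders align with the stated dissipations and verifying their nonnegativity through thermodynamic convexity; once this is in place, the energy and ballistic balances follow by the indicated recombination.
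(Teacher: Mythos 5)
Your overall architecture coincides with the paper's. Note first that the paper itself proves only the ballistic energy balance \eqref{BE0}: positivity, mass conservation, the energy balance and the entropy balance are quoted from \cite[Section 3]{FLMS_FVNSF}, \cite[Section 3]{LSY_penalty} and \cite[Lemma 4.1]{BLMSY}, and your sketches for mass conservation (test \eqref{VFV_D} with $\phi_h\equiv 1$), for the energy balance (test \eqref{VFV_M} with $\vuh$, \eqref{VFV_E} with $1$, and add), and for the entropy balance (discrete renormalization, second--order Taylor expansion in $(\vrh,p_h)$, convexity of $(\vr,p)\mapsto -\vr s$ producing $D_{s,i}$ and $R_s$) follow exactly the lines of those references. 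Your assembly of \eqref{BE0} is precisely the paper's proof: reformulate the entropy identity via the product rule $\phi_h D_t(\vrh s_h) = D_t(\vrh s_h\phi_h) - (\vrh s_h - \TS D_t(\vrh s_h))D_t\phi_h$, subtract from the energy balance, recombine the exterior--face integrals into the squared boundary term plus the boundary part of $R_{B,1}$, and convert the upwind flux term through $\intfacesint{\Up(\vrh s_h,\vuh)\jump{\phi_h}} = \intQ{\vrh s_h \vuh\cdot\Gradh\phi_h} - R_{B,2}(\phi_h)$, which is \cite[Lemma 2.5]{FLMS_FVNSF}.

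There are, however, two genuine flaws in your positivity step. First, the scheme \eqref{VFV} is \emph{implicit} (the fluxes are evaluated at the current time level, with $D_t$ a backward difference), so the new density cannot be exhibited as ``a combination with nonnegative coefficients of positive values at $t^k$''; that mechanism, and the CFL-type restriction you invoke to justify it, pertain to explicit upwind schemes. For the implicit scheme one argues instead via the M-matrix structure of the algebraic system (equivalently a minimum-principle/contradiction argument at the cell where $\vrh$ attains its minimum), and no restriction on $\TS$ is needed. Second, your claim that ``the right-hand side $\mathbb{S}_h:\Gradh\vuh$ is a nonnegative source'' for the temperature is incorrect as stated: the right-hand side of \eqref{VFV_E} is $(\bS_h - p_h\bI):\Gradh\vuh = \bS_h:\Gradh\vuh - p_h\Divh\vuh$, and the pressure-work term $-p_h\Divh\vuh$ has no sign. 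Positivity of $\vth$ survives only because $p_h=\vrh\vth$ makes this term \emph{linear} in $\vth$, so it vanishes where $\vth$ would cross zero; omitting it, the argument you describe fails. A smaller imprecision: your kinetic-energy identity actually reads $D_t(\vrh\vuh)\cdot\vuh = D_t\bigl(\tfrac12\vrh|\vuh|^2\bigr) + \tfrac12 (D_t\vrh)|\vuh|^2 + \tfrac{\TS}{2}\vrh^\triangleleft|D_t\vuh|^2$, and the middle term must be absorbed by testing the continuity equation with $\tfrac12|\vuh|^2$, whose flux contributions then combine with the momentum convection to produce the upwind dissipation in \eqref{energy_dissipation}.
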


\begin{Remark}
Due to the  temperature terms on the boundary,  there is no possibility to obtain a priori bounds neither from the energy balance \eqref{energy_stability}  nor  from the entropy balance \eqref{eq_entropy_stability}. To overcome this problem, it is necessary to work with the concept of ballistic energy~\eqref{BE0}.
\end{Remark}

\begin{proof}[Proof of Lemma \ref{vfv_pro}]
Following the analysis in \cite[Section 3]{FLMS_FVNSF} and \cite[Section 3]{LSY_penalty} we can obtain \eqref{Pro_P} -- \eqref{entropy_dissipation}, see also \cite[Lemma 4.1]{BLMSY}.

In what follows, we derive the ballistic energy balance \eqref{BE0}.
Firstly, using the algebraic equalities
\begin{align*}
& \phi_h D_t (\vrh s_h ) = D_t (\vrh s_h \phi_h) - (\vrh s_h - D_t(\vrh s_h) \TS ) D_t \phi_h
%- \vrh s_h D_t \phi_h + \TS D_t (\vrh s_h) D_t \phi_h
, \ \
%\\&
\jump{\frac{\phi_h}{\vth}} = \jump{\phi_h} \avs{\frac{1}{\vth}} + \avs{\phi_h} \jump{ \frac{1}{\vth} }
\end{align*}
we reformulate the entropy balance \eqref{eq_entropy_stability} as
\begin{align*}%\label{S5}
& D_t \intQ{ \vrh s_h \phi_h }
- \intQ{ \frac{\phi_h}{\vth } \difuh}
+\intfacesint{ \frac{\kappa}{h} \avs{\phi_h} \jump{\vth } \jump{\frac{1}{\vth } } }  + 2 \frac{\kappa}{ h }  \intfacesext{ \frac{ \vt_h^{\rm in} - \vthB }{\vt_h^{\rm in}}  \phi_h^{\rm in} }
\br
& = \intQ{ \vrh s_h D_t \phi_h }
- \TS \intQ{ D_t( \vrh s_h) D_t \phi_h }+
\intfacesint{ \Up(\vrh s_h , \vuh ) \jump{\phi_h} }
\br
& \quad
- \intfacesint{ \frac{\kappa}{h} \jump{\vth } \jump{\phi_h} \avs{ \frac{1}{\vth } } }
+ R_{s}(\phi_h) +D_s(\phi_h).
\end{align*}
Secondly, subtracting the above equation from the energy balance \eqref{energy_stability} we obtain
\begin{align}\label{S6}
&D_t \intQ{ \left(\frac{1}{2} \vrh |\vuh |^2 + c_v \vrh \vth - \vrh s_h \phi_h \right) }
+ \intQ{ \frac{\phi_h}{\vth} \difuh }
- \intfacesint{ \frac{\kappa}{h} \avs{\phi_h} \jump{\vth } \jump{\frac{1}{\vth } } }
\br & \hspace{2cm}
+2 \frac{\kappa}{ h }  \intfacesext{ \frac{ (\vt_h^{\rm in} - \vthB)^2 }{\vt_h^{\rm in}}   }
- \intQ{\vrh \vc{g} \cdot  \vuh}
+D_s(\phi_h) + D_{\rm E}
\br&
= -\intQ{ \vrh s_h D_t \phi_h }
- \intfacesint{ \Up(\vrh s_h , \vuh ) \jump{\phi_h} }
+ \intfacesint{ \frac{\kappa}{h} \jump{\vth } \jump{\phi_h} \avs{ \frac{1}{\vth } } }
\br&\hspace{2cm}
+2 \frac{\kappa}{ h }  \intfacesext{ \frac{ (\vt_h^{\rm in} - \vthB) (\phi_h^{\rm in} - \vthB) }{\vt_h^{\rm in}} }
+ \TS \intQ{ D_t( \vrh s_h) D_t \phi_h } - R_{s}(\phi_h) .
\end{align}
Recalling \cite[Lemma 2.5]{FLMS_FVNSF}, i.e.
\begin{align*}
&\intfacesint{ \Up(\vrh s_h , \vuh ) \jump{\phi_h} } = \intQ{\vrh s_h \vuh \cdot \Gradh \phi_h } - R_{B,2}(\phi_h),
%\\& \cblue \intfacesint{ \frac{\kappa}{h} \avs{\phi_h} \jump{\vth } \jump{\frac{1}{\vth } } } = -\kappa \int_{D_{\sigma}}{ \frac{\avs{\phi_h}}{\vth \vthout } \; \abs{\Gradd \vth}^2 } \dx,
%\\
%&\cblue \intfacesint{ \frac{\kappa}{h} \jump{\vth } \jump{\phi_h} \avs{ \frac{1}{\vth } } } = \kappa \int_{D_{\sigma}}{ \avs{\frac1{\vth} } \; \Gradd \vth \cdot \Gradd \phi_h} \dx
\end{align*}
we finish the proof.
\end{proof}

\subsection{Bounded consistent approximation}
The goal of this section is to show that the FV scheme \eqref{VFV} yields bounded consistent approximations. To this end, we firstly derive a priori uniform bounds which are useful to analyze the consistency.

\begin{Lemma}[{{\bf Uniform bounds}}]\label{lm_ub}
Let $(\vrh ,\vuh, \vth)$ be a numerical solution of the FV method \eqref{VFV} with $(\TS, h) \in (0,1)^2$, $\TS ={\cal O}(h)$, and $\alpha > -1$.
Assume that there exist $\Un{\vr}, \Ov{\vr}, \Ov{u}$ and $\Un{\vt}, \Ov{\vt}, \Un{\vt} \leq \vtB \leq \Ov{\vt}$ such that the approximate family $(\vrh ,\vuh, \vth)$ is bounded, i.e.
\begin{equation}\label{HP}
0< \Un{\vr} \leq \vrh \leq \Ov{\vr}, \quad
0< \Un{\vt} \leq \vth \leq \Ov{\vt}, \quad
\abs{\vuh} \leq \Ov{u} \ \  \mbox{ uniformly for } h \to 0.
\end{equation}
Then the following hold
\begin{subequations}\label{ap}
\begin{align}\label{ap1}
&\norm{\Gradd \vth}_{L^2((0,T)\times Q; \R^{d})} + \norm{\Dhuh}_{L^2((0,T)\times Q; \R^{d\times d})} \leq C ,
\end{align}
\begin{align}\label{ap2}
&(\TS)^{1/2}\left( \norm{D_t \vrh }_{L^2((0,T)\times Q)} +\norm{D_t \vth }_{L^2((0,T)\times Q)} +\norm{D_t \vuh }_{L^2((0,T)\times Q; \R^{d})} \right) \leq C ,
\end{align}
\begin{align} \label{ap3}
\int_0^{T}\intfacesint{ \left( h^\alpha + \abs{ \avs{\vuh}\cdot \vc{n} } \right) \left( \jump{\vrh}^2 + \jump{p_h}^2 + \abs{\jump{ \vuh }}^2 \right) }\dt \leq C.
\end{align}
\end{subequations}
The constant $C$ depends on $\|\vtB\|_{W^{1,\infty}((0,T) \times Q)}$ and $\Un{\vr}, \Ov{\vr}, \Un{\vt}, \Ov{\vt}, \Ov{u}$, but it is independent of the discretization parameters $(h, \TS)$.
\end{Lemma}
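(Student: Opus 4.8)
The plan is to test the discrete ballistic energy balance \eqref{BE0} with a fixed, time-independent temperature profile and integrate in time, arranging that the nonnegative dissipation terms on the left reproduce exactly the quantities in \eqref{ap1}--\eqref{ap3}, while every term on the right is either bounded directly by the $L^\infty$ hypothesis \eqref{HP} or absorbed into those dissipations by Young's inequality. Concretely, I would fix a smooth $\Theta$ with $\Theta|_{\partial Q}=\vtB$ and $\inf_Q\Theta>0$ (the harmonic extension used in the continuous theory) and take $\phi_h=\hvth$, a piecewise-constant projection of $\Theta$ subordinate to $\grid_h$ and admissible in \eqref{BE0}. Since the data are time-independent, $\hvth$ does not depend on $t$, so $D_t\hvth=0$; moreover \eqref{HP} together with the smoothness of $\Theta$ gives $0<c\le\hvth\le C$, $\norm{\Gradh\hvth}_{L^\infty}\le C$ and the crucial estimate $|\jump{\hvth}_\sigma|\aleq h$, as well as $|\hvth^{\rm in}-\vthB|\aleq h$ on exterior faces.

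Integrating \eqref{BE0} over $(0,T)$, the backward difference telescopes, producing the endpoint ballistic energies $\intQ{(\tfrac12\vrh|\vuh|^2+c_v\vrh\vth-\vrh s_h\hvth)}$ at $t=T$ and $t=0$, both bounded by a constant depending on $\Un{\vr},\Ov{\vr},\Un{\vt},\Ov{\vt},\Ov{u}$ via \eqref{HP} and $s_h=c_v\log\vth-\log\vrh$. Four genuinely nonnegative contributions remain on the left. The viscous term $\intQ{\frac{\hvth}{\vth}\difuh}$ is coercive, since $\bS_h:\Gradh\vuh=2\mu|\Dhuh|^2+\lambda|\Divh\vuh|^2\ge 2\mu|\Dhuh|^2$ and $\hvth/\vth\ge c>0$, controlling $\norm{\Dhuh}_{L^2}^2$. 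The conduction term $-\intfacesint{\frac{\kappa}{h}\avs{\hvth}\jump{\vth}\jump{\frac1\vth}}$ equals $\intfacesint{\frac{\kappa}{h}\avs{\hvth}\frac{\jump{\vth}^2}{\vth^{\rm in}\vth^{\rm out}}}$, bounded below by $c\intfacesint{\frac{\kappa}{h}\jump{\vth}^2}$; recalling $(\Gradd\vth)_\sigma=\jump{\vth}h^{-1}\vn$ on $D_\sigma$ with $|D_\sigma|\approx h|\sigma|$, this is $\norm{\Gradd\vth}_{L^2}^2$, yielding \eqref{ap1}. Finally $D_s(\hvth)\ge0$ and $D_{\rm E}\ge0$: the parts $D_{s,1}$ and $\tfrac{\TS}{2}\intQ{\vrh^\triangleleft|D_t\vuh|^2}$ give the $\TS$-weighted bounds \eqref{ap2} (the coefficients $\xi_{\vr,h}^{-1},|\xi_{\vt,h}|^{-2},\vrh^\triangleleft$ staying in a compact positive range by \eqref{HP}), while $D_{s,2},D_{s,3}$ and the two jump terms of $D_{\rm E}$ produce the jump integrals in \eqref{ap3}; here the key point is that the Hessian $\nabla^2_{(\vr,p)}(-\vr s)$, evaluated at intermediate states kept in a fixed compact set away from vacuum by \eqref{HP}, is uniformly positive definite, so $D_{s,2}+D_{s,3}\ageq\intfacesint{(h^\alpha+|\avs{\vuh}\cdot\vn|)(\jump{\vrh}^2+\jump{p_h}^2)}$.

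It then remains to bound the right-hand side. The convective entropy term $-\intQ{\vrh s_h(D_t\hvth+\vuh\cdot\Gradh\hvth)}$ has vanishing first summand and a bounded second one (all factors $L^\infty$-bounded on the finite cylinder), and the mass term is controlled by \eqref{Pro_M}. The remaining pieces -- $\intfacesint{\frac{\kappa}{h}\jump{\vth}\jump{\hvth}\avs{\frac1\vth}}$ and the remainders $R_B(\hvth),R_s(\hvth)$ -- are exactly where $|\jump{\hvth}_\sigma|\aleq h$ enters: each carries a factor $\jump{\hvth}$, so Young's inequality splits it into an $\epsilon$-small multiple of a left-hand dissipation (the $\frac{\kappa}{h}\jump{\vth}^2$ term, or the $h^\alpha$- and $|\avs{\vuh}\cdot\vn|$-weighted jump terms of $D_s,D_{\rm E}$) plus a remainder of type $\frac{\kappa}{h}\jump{\hvth}^2\aleq\kappa h$ or $h^\alpha\jump{\hvth}^2\aleq h^{\alpha+2}$, whose sum over the $\mathcal{O}(h^{-1})$ faces is bounded uniformly in $h$ precisely because $\alpha>-1$. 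The boundary remainder $R_{B,1}$ is treated the same way against the boundary dissipation $\frac{\kappa}{h}\intfacesext{(\vt_h^{\rm in}-\vthB)^2/\vt_h^{\rm in}}$, using $|\hvth^{\rm in}-\vthB|\aleq h$, while the $\TS$-time-derivative part of $R_{B,1}$ vanishes as $D_t\hvth=0$. Choosing $\epsilon$ small to absorb all these into the left and collecting the bounded terms delivers \eqref{ap1}--\eqref{ap3} with the stated dependence of the constant.

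The main obstacle I anticipate is precisely this absorption bookkeeping: one must verify that each coupling and remainder term genuinely carries a factor $\jump{\hvth}$ (or $\hvth^{\rm in}-\vthB$ at the boundary) of order $h$, so that after Young's inequality the uncontrolled part sums to $\mathcal{O}(1)$ and the rest is a small multiple of a dissipation already present, and to confirm that the powers of $h^\alpha$ balance for all $\alpha>-1$. Keeping the coercivity constants uniform additionally relies on \eqref{HP} confining all thermodynamic states to a fixed compact set on which both $\bS_h:\Gradh\vuh$ and $\nabla^2_{(\vr,p)}(-\vr s)$ are uniformly coercive; these estimates can be carried out exactly as in \cite{FLMS_FVNSF,LSY_penalty,BLMSY}.
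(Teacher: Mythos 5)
Your proposal is correct and follows essentially the same route as the paper: the paper likewise tests the discrete ballistic energy balance \eqref{BE0} with a piecewise constant lifting of the boundary temperature (it takes $\phi_h=\Pim \vtB$, reset to equal $\vthB$ exactly on cells touching $\partial Q$, so the boundary part of $R_{B,1}$ vanishes identically instead of being absorbed), extracts \eqref{ap1}--\eqref{ap3} from the coercivity of the conduction and viscous terms, of $D_s$, $D_{\rm E}$ and of the Hessian $\nabla^2_{(\vr,p)}(-\vr s)$ under \eqref{HP}, and absorbs the coupling term, $R_s$ and $R_B$ by Young's inequality exactly as you describe, with $\alpha>-1$ guaranteeing that the $h^{\alpha+1}$-type remainders stay bounded. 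The one caveat is that you set $D_t\hvth=0$ by assuming time-independent data, whereas the lemma's constant involves $\|\vtB\|_{W^{1,\infty}((0,T)\times Q)}$ and the paper keeps the terms $\intQ{\vrh s_h D_t\phi_h}$ and $\TS\intQ{D_t(\vrh s_h)\,D_t\phi_h}$, bounding the former by that norm and absorbing the latter into $D_{s,1}$ --- a step your own Young-inequality bookkeeping covers verbatim.
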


\begin{Remark}
It is easy to check that
\begin{align}\label{Gradvt}
\norm{\Gradd \vth}_{L^2((0,T)\times Q)}^2
%& =  \int_{D_{\sigma}}{ \abs{\Gradd \vth}^2 } \dx + \int_{\Omega \setminus D_{\sigma}}{ \abs{\Gradd \vth}^2 } \dx \br
& =  \frac1h \int_0^{T}\intfacesint{ \abs{ \jump{\vth }}^2} \dt +   \frac2h \int_0^{T}\intfacesext{ \abs{ \vthB - \vth^{\rm in} }^2  }\dt.
\end{align}
Hence, \eqref{ap1} implies
\[
\int_0^{T}\intfacesext{ \abs{ \vtB - \vth^{\rm in} }^2  } \dt\to 0 \quad \mbox{as} \ \TS, h \to 0.
\]

\end{Remark}

\begin{Remark}
Let us consider the boundary cell $K \subset Q, {\facesK}\cap \facesext \neq \emptyset$ and denote by $L, M$ its inner and outer {(ghost)} neighbours in the direction $\vn|_{\partial Q}$:
\[
\facesL \cap \facesK \in \facesint;   \quad  \facesM \cap \facesK \in \facesext \mbox{ implying} \quad \vu_K + \vu_M = 0.
\]
%\[
%L \cap K \neq \emptyset, \ L \subset Q;   \quad  M \cap K \neq \emptyset, \ M \cap \facesext = \emptyset \quad \mbox{implying} \quad \vu_K + \vu_M = 0.
%\]
Then we obtain from $\norm{\Dhuh}_{L^2((0,T)\times Q; \R^{d\times d})} \leq C$ that
\[
\frac{1}{4 h}\int_0^{T}\intfacesext{ \abs{ \vu_L + \vu_K }^2  } \dt = \frac{1}{4 h}\int_0^{T}\intfacesext{ \abs{ \vu_L - \vu_M }^2  } \dt \leq  \norm{\Dhuh}_{L^2((0,T)\times Q; \R^{d\times d})} \leq C.
\]
On the other hand,  we obtain from $h^\alpha \int_0^{T}\intfacesint{ \abs{\jump{ \vuh }}^2 } \dt\leq C$ and $Q = \mathbb{T}^{d-1} \times [-1,1]$ that
\[
h^\alpha \int_0^{T} \intfacesext{ \abs{ \vu_L - \vu_K }^2  } \dt \leq  h^\alpha \int_0^{T} \intfacesint{ \abs{\jump{ \vuh }}^2 } \dt\leq C.
\]
Consequently,
\begin{align} \label{ap4}
\int_0^{T}\intfacesext{ \abs{\vu_K }^2  } \dt \leq \int_0^{T} \intfacesext{ \abs{ \vu_L - \vu_K }^2  } \dt + \int_0^{T} \intfacesext{ \abs{ \vu_L + \vu_K }^2  } \dt \aleq h + h^{-\alpha}
\end{align}
leading to
\[
\int_0^{T}\intfacesext{ \abs{ \vuh^{\rm in} }^2  }\dt  = \int_0^{T} \intfacesext{ \abs{\vu_K }^2  }\dt \to 0 \quad \mbox{as} \ \TS, h \to 0 \quad \mbox{if } \ \alpha < 0.
\]
\end{Remark}

\begin{proof}[Proof of Lemma \ref{lm_ub}]
Let us choose a test function in the ballistic energy balance \eqref{BE0} in the following way

\[
\phi_h(x) =  \begin{cases}
\vthB & \mbox{if} \  x \in K \subset Q, \facesK \cap \facesext \neq \emptyset, \br
\Pim \vtB & \mbox{otherwise}.
\end{cases}
\]
Here, the extension $\phi_h^{\rm out}|_{\sigma\in\facesext} =\phi_h^{\rm in}|_{\sigma\in\facesext} = \vthB$.
%\[
%\phi_h(x) =  \begin{cases}
%\vthB & \mbox{if } x \notin Q, \br
%\vthB & \mbox{if} \  x \in K \subset Q, K \cap \facesext \neq \emptyset, \br
%\Pim \vtB & \mbox{otherwise}  \br
%\end{cases}
%\quad \mbox{results to } \ \avs{\phi_h}_{\sigma} = \vthB,\ \jump{\phi_h} = 0,\  \sigma \in \facesext.
%\]
Thanks to the boundedness assumption \eqref{HP} we obtain
\begin{align}\label{BE_ub_1}
& \int_0^T\intQ{ \frac{\phi_h}{\vth }\difuh }\dt - \frac{\kappa}{h}  \int_0^{T}\intfacesint{\avs{\phi_h} \jump{\vth } \jump{\frac{1}{\vth } } } \dt
+2 \frac{\kappa}{ h } \int_0^{T} \intfacesext{ \frac{ (\vt_h^{\rm in} - \vthB)^2 }{\vt_h^{\rm in}}   } \dt
\br
& \hspace{1cm}
+ \int_0^{T} \left( D_s(\phi_h) + D_{\rm E} \right) \dt
\br
&\aleq 1
-\int_0^T\intQ{ \vrh s_h ( D_t \phi_h + \vuh \cdot \Gradh \phi_h) }\dt
+ \int_0^{T} \intfacesint{ \frac{\kappa}{h} \jump{\vth } \jump{\phi_h} \avs{ \frac{1}{\vth } } }  \dt
\br
&\hspace{1cm} + \int_0^{T} \left( R_{B} (\phi_h) - R_{s}(\phi_h) \right) \dt.
\end{align}

Hence, the left-hand side terms can be estimated as
\begin{align*}
& - \frac{\kappa}{h}  \intfacesint{\avs{\phi_h} \jump{\vth } \jump{\frac{1}{\vth } } }
+2 \frac{\kappa}{ h }  \intfacesext{ \frac{ (\vt_h^{\rm in} - \vthB)^2 }{\vt_h^{\rm in}}   } \ageq \norm{ \Gradd \vth}_{L^2(Q; \R^{d})}^2,
\br &
\intQ{ \frac{ \phi_h}{\vth} \difuh } \ageq \norm{\Dhuh}_{L^2(Q; \R^{d\times d})}^2,
\quad
D_{s,1}( \phi_h) \ageq \ \TS \bigg(\norm{D_t \vrh }_{L^2(Q)}^2 + \norm{D_t \vth }_{L^2(Q)}^2 \bigg),
\br &
D_{s,2}( \phi_h) \ageq \ \intfacesint{ \abs{ \avs{\vuh}\cdot \vc{n} } \left( \jump{\vrh}^2 + \jump{p_h}^2\right) },
\  D_{s,3}( \phi_h) \ageq h^\alpha
\intfacesint{ \bigg( \jump{\vrh}^2 + \jump{p_h}^2 \bigg)},
\\ &
D_{\rm E} \ageq \TS \norm{D_t \vuh }_{L^2(Q; \R^{d})}^2 + \intfacesint{ \bigg( h^\alpha + |\avs{\vuh } \cdot \vc{n} | \bigg) \abs{\jump{\vuh}}^2 } \nonumber.
\end{align*}
Note that in the estimates of $D_{s,2}, D_{s,3}$ we have used the fact that the Hessian matrix $\nabla_{(\vr,p)}^2 (-\vr s)$ is bounded from below by a positive constant under assumption \eqref{HP}.

Further, applying the interpolation estimates and Young's inequality we have the following estimates for the first line of the right-hand side terms of \eqref{BE_ub_1}
\begin{align*}
\Bigabs{ \intQ{\vrh s_h ( D_t \phi_h + \vuh \cdot \Gradh \phi_h)  } } &\aleq \|\vtB\|_{W^{1,\infty}((0,T) \times Q)},
\\
\Bigabs{\intfacesint{ \frac{\kappa}{h} \jump{\vth } \jump{\phi_h} \avs{ \frac{1}{\vth } } } }
%\Bigabs{ \int_{D_{\sigma}}{ \avs{\frac1{\vth}} \; \Gradd \vth \cdot \Gradd  \phi_h } \dx}
& \aleq \norm{\vtB}_{L^{\infty}(0,T;W^{1,\infty}(Q))} \norm{\Gradd \vth}_{L^1(Q; \R^{d})}
\aleq \frac{1}{\delta} + \delta \norm{\Gradd \vth}_{L^2(Q; \R^{d})}^2
\end{align*}
for a suitable $\delta \in \R$. Similarly, for the last line of \eqref{BE_ub_1} we have
\begin{align*}
&\Bigabs{R_{B,1}( \phi_h)} = \Bigabs{\TS \intQ{D_t (\vrh s_h) \; D_t  \phi_h} } \aleq
\TS \norm{\vtB}_{W^{1,\infty}(0,T;L^{\infty}(Q))}  \bigg(\norm{D_t \vrh }_{L^1(Q)} + \norm{D_t \vth }_{L^1(Q)} \bigg)
\br
&\hspace{1.9cm} \aleq \frac{\TS}{\delta}+ \delta  \TS\bigg(\norm{D_t \vrh }_{L^2(Q)}^2 + \norm{D_t \vth }_{L^2(Q)}^2 \bigg),
\\& \Bigabs{R_{B,2}( \phi_h)} \aleq \norm{\vtB}_{L^{\infty}(0,T;W^{1,\infty}(Q))} h \intfacesint{ 1} \aleq 1,
\br
&\Bigabs{R_{s}( \phi_h)} \aleq \norm{\vtB}_{L^{\infty}(0,T;W^{1,\infty}(Q))} h^{\alpha+1} \intfacesint{ \bigg( \abs{ \jump{ \vrh}}
	+ \abs{ \jump{ p_h}} \bigg)}
\br
&\hspace{1.8cm}\aleq h^{\alpha+1} \intfacesint{ \left[\frac{h}{\delta} + \frac{\delta}{h} \bigg( \jump{ \vrh}^2	+ \jump{ p_h}^2 \bigg) \right]}
\aleq \frac1{\delta} h^{\alpha+1} + \delta h^\alpha \intfacesint{ \bigg(\jump{\vrh }^2 + \jump{p_h }^2 \bigg) }.
\end{align*}

{Collecting the above estimates, we obtain from \eqref{BE_ub_1} that
\begin{align*}
&   \norm{\Dhuh}_{L^2((0,T)\times Q; \R^{d\times d})}^2+ \norm{ \Gradd \vth}_{L^2((0,T)\times Q; \R^{d})}^2
\\&\quad
+  \int_0^T \intfacesint{ \left( h^\alpha + |\avs{\vuh } \cdot \vc{n} | \right) \left( \jump{\vrh}^2 + \jump{p_h}^2 + \abs{\jump{\vuh}}^2\right) } \dt
\\&\quad  + \TS \Big(\norm{D_t \vrh }_{L^2((0,T)\times Q)}^2 + \norm{D_t \vth }_{L^2((0,T)\times Q)}^2 + \norm{D_t \vuh }_{L^2((0,T)\times Q; \R^{d\times d})}^2  \Big)
\\ &  \aleq
1+ \frac{1+\TS+ h^{\alpha+1}}{\delta} + \delta \norm{\Gradd \vth}_{L^2((0,T)\times Q; \R^{d})}^2 + \delta  \TS \Big(\norm{D_t \vrh }_{L^2((0,T)\times Q)}^2 + \norm{D_t \vth }_{L^2((0,T)\times Q)}^2  \Big)
\\ &\quad + \delta h^\alpha   \int_0^T \intfacesint{ \left( \jump{\vrh}^2 + \jump{p_h}^2\right) } \dt.
\end{align*}
 Consequently, choosing $\alpha>-1$ and $\delta \in (0,1)$ completes the proof.
}\end{proof}

Having above a priori bounds \eqref{ap} in hand, we are now ready to show the consistency of the FV scheme \eqref{VFV}.
\begin{Lemma}[{{\bf Bounded consistent property}}]\label{lem_C1}
Let $(\vrh, \vuh, \vth)$ be a numerical solution obtained by the FV scheme \eqref{VFV} with $(\TS, h) \in (0,1)^2$, $\TS={\cal O}(h)$, and $-1 < \alpha <1$.
Assume that there exist $\Un{\vr}, \Ov{\vr}, \Ov{u}$ and $\Un{\vt}, \Ov{\vt}, \Un{\vt} \leq \vtB \leq \Ov{\vt}$ such that the approximate family $(\vrh ,\vuh, \vth)$ is bounded, i.e.
\begin{equation}\label{HP1}
0< \Un{\vr} \leq \vrh \leq \Ov{\vr}, \quad
0< \Un{\vt} \leq \vth \leq \Ov{\vt}, \quad
\abs{\vuh} \leq \Ov{u} \ \  \mbox{ uniformly for } h \to 0.
\end{equation}

Then for any $\tau \in [0,T]$ we have
\begin{align}\label{cP5}
& \intTauQ{ \bigg( \vuh \cdot \Div \mathbb{T}+ \Dhuh : \mathbb{T} \bigg) } =  e_{D \vu}^h(\mathbb{T})
\end{align}
for any $\mathbb{T} \in W^{2,\infty}((0,T) \times Q; \R^{d\times d}_{sym})$;
\begin{align}\label{cP6}
&\intTauQ{ \bigg( (\hvt - \vth) \Div \bfphi+ (\Grad\hvt - \Gradd\vth ) \bfphi  \bigg)} =  e_{D \vt}^h(\bfphi)
\end{align}
for any $\bfphi \in W^{2,\infty}((0,T) \times Q; \R^d)$ and $\hvt \in W^{2,\infty}((0,T) \times Q),\ \inf \hvt > 0,\ \hvt|_{\partial Q} = \vtB$;
\begin{equation} \label{cP1_D}
\left[ \intQ{ \vrh\phi } \right]_{t=0}^{t=\tau}=
\intTauQ{\bigg( \vrh \partial_t \phi + \vrh \vuh \cdot \Grad \phi \bigg)}   - e_\vr^h(\phi)
\end{equation}
for any $\phi \in C^2([0,T] \times \Ov{Q})$;
\begin{align} \label{cP2_D}
\left[ \intQ{ \vrh \vuh \cdot \bfphi } \right]_{t=0}^{t=\tau} = &
\intTauQ{\bigg(  \vrh \vuh \cdot \partial_t \bfphi + \vrh \vuh \otimes \vuh : \Grad \bfphi \bigg)} \br
&- \intTauQ{ ( \bS_h -p_h \I) : \Grad \bfphi }
- \intTauQ{\vrh \vc{g} \cdot \bfphi}
  - e_{\vm}^h(\bfphi)
\end{align}
for any $\bfphi \in C^2_c([0,T] \times Q; \R^d)$;
\begin{align}\label{cP3_D}
& \left[ \intQ{ \vrh s_h \phi } \right]_{t=0}^{t=\tau}
=
\intTauQ{ \vrh s_h (\pd_t\phi + \vuh \cdot \Grad \phi ) }  - \intTauQ{ \frac{\kappa}{\vth} \Gradd \vth \cdot \Grad \phi }
\br
&\quad + \intTauQ{ \frac{ \kappa \phi \chi_h}{ \vth^2} \abs{\Gradd \vth }^2 }
+ \intTauQ{\difuh \frac{ \phi}{\vth} }
  -e_{S,1}^h(\phi) -  e_{S,2}^h(\phi)
\end{align}
for any $\phi \in C^2_c([0,T] \times Q)$, $\phi \geq 0$;
\begin{align}\label{cP4}
&\left[ \intQ{ \left(\frac{1}{2} \vrh |\vuh |^2 + c_v \vrh \vth - \vrh s_h \hvt \right) (t, \cdot)} \right]_{t=0}^\tau
+ \intTauQ{ \bigg( \frac{ \kappa\hvt \chi_h}{\vth^2} \; \abs{\Gradd \vth}^2 + \frac{\hvt}{\vth }\difuh \bigg)}
\br
&  = \intTauQ{\vrh \vc{g} \cdot \vuh} - \intTauQ{\bigg( \vrh s_h \partial_t \hvt + \vrh s_h \vuh \cdot \Grad \hvt - \frac{\kappa}{\vth} \; \Gradd \vth \cdot \Grad \hvt \bigg)}  - e_{E,1}^h(\hvt) - e_{E,2}^h(\hvt)
\end{align}
for any $\tau \in [0,T]$ and any $\hvt \in W^{2,\infty}((0,T) \times Q),\ \inf \hvt > 0,\ \hvt|_{\partial Q} = \vtB$.
Here
\begin{align} \label{cp4_1}
%\chi_h(x) = \begin{cases}
%\frac{\vth}{\vthout} & \mbox{if }  x \in D_{\sigma} \br
%\frac{\vth}{\vthB} & \mbox{otherwise }
%\end{cases}
% \quad
%\mbox{i.e.} \quad
\chi_h(x) = \sum_{\sigma \in \facesint} \mathds{1}_{D_{\sigma}}  \frac{\vth}{\vthout} + \sum_{\sigma \in \facesext}  \mathds{1}_{D_{\sigma}}  \frac{\vth}{\vthB}
\quad \mbox{implying} \ \ \chi_h \to 1 \quad \mbox{in} \ L^2((0,T)\times Q).
\end{align}
%satisfies
%\[
%\chi_h \to 1 \quad \mbox{in} \ L^2((0,T)\times \Omega).
%\]
The consistency errors satisfy
\begin{equation}\label{con-e}
\begin{cases}
\ \abs{e_{D \vu}^h }  \leq C_{D \vu}\left( h + h^{(1-\alpha)/2}\right), \quad \abs{e_{D \vt}^h } \leq C_{D \vt}  h,
\\
\ \Bigabs{e_\vr^h } + \Bigabs{ e_{\vm}^h } \leq C\left( \TS +h+h^{(1-\alpha)/2} + h^{(1+\alpha)/2} \right),
\\
\ \Bigabs{ e_{S,1}^h } \leq C_S\left( \TS^{1/2} +h+h^{(1-\alpha)/2} + h^{(1+\alpha)/2} \right),
\\
\ \abs{e_{E,1}^h } \leq C_{E}\left( (\TS)^{1/2} +h + h^{(1-\alpha)/2}\right)
\end{cases}
\end{equation}
and
\[
e_{S,2}^h(\phi) \leq 0 \ \mbox{ for any }  \phi\geq 0;
\mbox{ and } e_{E,2}^h(\hvt) \geq 0 \ \mbox{ for any } \ \hvt\geq 0,
\]
where the constants $C_{D \vu}$, $C_{D \vt}$, $C$,  $C_S$ and $C_E$ are independent of parameters $\TS,h$.
\end{Lemma}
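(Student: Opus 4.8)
The plan is to derive each of the six identities \eqref{cP5}--\eqref{cP4} directly from the corresponding discrete balance established in Lemma \ref{vfv_pro}, by inserting the projection of the smooth test function in place of the discrete one, and then to absorb every mismatch between the discrete and continuous expressions into the consistency errors, whose smallness will follow from the uniform bounds of Lemma \ref{lm_ub} combined with standard projection estimates. Throughout, I would fix the test functions $\mathbb{T}$, $\bfphi$, $\phi$, $\hvt$ and work with $\Pim$ applied to them (or $\Piw$ for boundary traces), using that for $g \in W^{2,\infty}$ one has $\norm{g - \Pim g}_{L^\infty} \aleq h$ and a comparable bound for first differences, so that replacing point values by cell/face averages costs a factor $h$.

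For the two gradient-compatibility relations I would proceed most directly. To get \eqref{cP5} I would test the definition of $\Dhuh$ against $\Pim \mathbb{T}$, perform discrete summation by parts to move the discrete symmetric gradient onto $\mathbb{T}$, and compare $\Divh$ of the averaged flux with $\Div \mathbb{T}$; the discrepancy is a second-order Taylor remainder on each face, yielding the $h$ and $h^{(1-\alpha)/2}$ contributions once the jump control \eqref{ap3} and the gradient bound \eqref{ap1} are invoked. The temperature relation \eqref{cP6} is analogous but must account for the Dirichlet trace: the boundary faces contribute $\vthB = \Piw \vtB$, and the identity \eqref{Gradvt} together with \eqref{ap1} shows that the mismatch $\vth^{\rm in} - \vthB$ is controlled, giving the clean $C_{D\vt}\,h$ bound. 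The continuity identity \eqref{cP1_D} and the momentum identity \eqref{cP2_D} follow from \eqref{VFV_D} and \eqref{VFV_M} by the same mechanism: discrete-in-time summation by parts produces the time-derivative terms with the $\TS$ error, while the upwind convective fluxes are rewritten as centered fluxes plus numerical diffusion, the latter estimated through \eqref{ap3}, which is exactly where the powers $h^{(1-\alpha)/2}$ and $h^{(1+\alpha)/2}$ enter.

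The delicate relations are the entropy balance \eqref{cP3_D} and the ballistic energy balance \eqref{cP4}. For \eqref{cP3_D} I would start from \eqref{eq_entropy_stability} with $\phi_h = \Pim \phi$; the main new object is the diffusive entropy production, where the discrete term $\tfrac{\kappa}{h}\jump{\vth}\jump{1/\vth}$ must be recast as $\tfrac{\kappa \chi_h}{\vth^2}\abs{\Gradd\vth}^2$. This forces the definition \eqref{cp4_1} of $\chi_h$ and the verification that $\chi_h \to 1$ in $L^2$, which uses $\underline{\vt} \le \vth \le \overline{\vt}$ together with the face-gradient bound. The sign-definite leftover from $D_s(\phi_h) \ge 0$ becomes $e_{S,2}^h(\phi) \le 0$, while the remaining mismatches assemble into $e_{S,1}^h$, bounded as in \eqref{con-e} via \eqref{ap1}--\eqref{ap3}. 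For \eqref{cP4} I would take $\hvt$ satisfying $\hvt|_{\partial Q} = \vtB$ and use the ballistic balance \eqref{BE0} with $\phi_h = \Pim \hvt$; the algebraic identities already recorded in the proof of Lemma \ref{vfv_pro} convert the discrete flux and remainder terms $R_{B}$, $R_{s}$ into the continuous advection and boundary contributions up to the error $e_{E,1}^h$, and the nonnegative dissipation $D_s(\hvt) + D_{\rm E}$ supplies $e_{E,2}^h \ge 0$.

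The principal obstacle I anticipate is the bookkeeping in \eqref{cP3_D}--\eqref{cP4}: one must simultaneously match the discrete thermal diffusion with the $\chi_h$-weighted continuous form while preserving the sign structure, control the upwind remainders $R_{s}$, $R_{B}$ uniformly, and ensure that every error term is genuinely $o(1)$ under the constraint $-1 < \alpha < 1$, so that both $h^{(1-\alpha)/2} \to 0$ and $h^{(1+\alpha)/2} \to 0$. All three hinge on the a priori bounds \eqref{ap}, so the real work is a careful term-by-term splitting rather than any single hard estimate; the individual estimates are routine once the jump control \eqref{ap3} and the gradient bounds \eqref{ap1} are in hand.
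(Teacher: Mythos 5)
Your overall architecture coincides with the paper's: both start from the discrete balances of Lemma \ref{vfv_pro}, insert projections of the smooth test functions, and absorb every mismatch into consistency errors controlled by the uniform bounds \eqref{ap} of Lemma \ref{lm_ub} together with interpolation estimates, with the sign conditions on $e_{S,2}^h$ and $e_{E,2}^h$ inherited from the nonnegative dissipation terms $D_s$, $D_{\rm E}$ (plus the tail integrals over $[\tau,t^{n+1}]$, which you omit but which are harmless and carry the right sign).

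The one place where your plan, as written, does not go through is the choice of discrete test functions near the boundary. The discrete ballistic energy balance \eqref{BE0} is stated only for test functions satisfying $\jump{\phi_h}_{\sigma}=0$ for $\sigma\in\facesext$, and the plain projection $\Pim\hvt$ does not meet this constraint; nor does it annihilate the boundary term $2\frac{\kappa}{h}\intfacesext{ (\vt_h^{\rm in}-\vthB)(\phi_h^{\rm in}-\vthB)/\vt_h^{\rm in} }$ inside $R_{B,1}$. This is precisely why the paper introduces the modified projections \eqref{vt-test-1} (equal to $\vthB$ on cells touching $\partial Q$, used for \eqref{cP6} and \eqref{cP4}) and \eqref{vt-test-2} (cut off to zero on cells touching $\partial Q$, used for \eqref{cP3_D}): with these choices the exterior-face contributions vanish identically — in particular one gets the exact identity \eqref{a1} and $E_{B,res}^{(3)}=0$ — instead of leaving boundary remainders to estimate. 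For the entropy balance your choice self-repairs, since $\phi\in C_c^2([0,T]\times Q)$ forces $\Pim\phi$ to vanish on boundary cells once $h$ is small relative to ${\rm dist}({\rm supp}\,\phi,\partial Q)$; but for the ballistic balance you must either adopt the paper's modified projection, or extend $\Pim\hvt$ by mirror ghost values so the exterior jumps vanish and then estimate the resulting boundary terms via \eqref{Gradvt} (they come out of order $h^{1/2}$, which is still within the claimed tolerance because $\TS^{1/2}=\mathcal{O}(h^{1/2})$). As literally stated, however, the application of \eqref{BE0} to $\Pim\hvt$ is not licensed, and this admissibility issue — rather than any single estimate — is the key technical point your sketch misses.
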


\begin{Remark}
The relations \eqref{cP5} -- \eqref{con-e} are compatible with the relations \eqref{cf1} -- \eqref{cf6} stated in the definition of bounded consistent approximate method, cf. Definition \ref{DC1}. %\cred since we are now analyzing specific FV approximations which are  l.s.c. in time , cf. \eqref{VFV}.
\end{Remark}

\begin{proof}[Proof of  Lemma~\ref{lem_C1}]
The proof follows the lines of  \cite[Section 4]{FLMS_FVNSF}, \cite[Section 4]{BLMSY}, \cite[Section 4]{LSY_penalty} and also \cite{FeLMMiSh}.  Here, we only concentrate on \eqref{cP5}, \eqref{cP6}, \eqref{cP3_D} and \eqref{cP4}, which are new estimates compared to the above mentioned literature.

\

{\bf Step 1 -- compatibility of discrete velocity gradients $e_{D \vu}^h$.}
%Rewrite
%\begin{align*}
%&\intTauQ{ \vuh \cdot \Div \mathbb{T} } =  \intTauQ{ \vuh \cdot \Divmesh \Piw \mathbb{T}},
%\end{align*}
%where
%\[
%\Divmesh  \bfphi_h  = \sumi \pdmeshi \Phi_{i,h}|_K \mathds{1}_K, \quad  \pdmeshi \Phi_{i,h}|_K = \frac{  \Phi_{i,h}|_{\sigma_K^{i+}} -\Phi_{i,h}|_{\sigma_K^{i-}}   }{h}
%\]
%and $\bfphi_h = ( \Phi_{i,h}, \ldots,  \Phi_{d,h})^T$, $\sigma_K^{i-}$ and $\sigma_K^{i+}$ are respectively the left and right faces of an element $K$ that are orthogonal to the canonical basis vector $\ve_i$.
Denoting $\mathbb{T}_h = \Pim \mathbb{T}$ for $\mathbb{T}(x) \in \R^{d\times d}$ and $ \mathbb{T}_K = \mathbb{T}_h|_{K}$ we have
\begin{align*}
&  \intQ{ \Big( \vuh \cdot \Div \mathbb{T}+ \Dhuh : \mathbb{T} \Big) } %= \intOB{ \vuh \cdot \Divmesh \Piw \mathbb{T} + \Gradh \vuh : \mathbb{T}_h }
=   \sum_K \vu_K \cdot \sum_{\sigma \in \facesK} |\sigma|\Piw \mathbb{T} \cdot \vn + \intQ{  \Dhuh : \mathbb{T}_h}
\br
&
= \sum_K \vu_K \cdot \sum_{\sigma \in \facesK} |\sigma|\Piw \mathbb{T} \cdot \vn
+ \sum_K  \mathbb{T}_K : \sum_{\sigma \in \facesK } |\sigma|\avs{\vuh}  \otimes \vn
\br
& =  - \intfacesint{  \jump{\vuh} \cdot \Piw \mathbb{T} \cdot \vn } + \intfacesext{ \vuh^{\rm in} \cdot \Piw \mathbb{T} \cdot \vn }
+ \frac12 \sum_K  \mathbb{T}_K : \sum_{\sigma \in \facesK } |\sigma|\jump{\vuh}  \otimes \vn
%-  \intfacesint{ \avs{\vuh} \cdot \jump{\mathbb{T}_h} \cdot \vn  }
\br
& =  - \intfacesint{  \jump{\vuh} \cdot \Piw \mathbb{T} \cdot \vn } + \intfacesext{ \vuh^{\rm in} \cdot \Piw \mathbb{T} \cdot \vn }
 +  \intfacesint{ \jump{\vuh}\cdot \avs{\mathbb{T}_h} \cdot \vn } - \intfacesext{  \vuh^{\rm in}\cdot \mathbb{T}_h^{\rm in} \cdot \vn }
\br
& =  - \intfacesint{ \jump{\vuh} \cdot \Big( \Piw \mathbb{T} - \avs{\mathbb{T}_h} \Big)\cdot \vn } + \intfacesext{\vuh^{\rm in} \cdot \Big( \Piw \mathbb{T} - \mathbb{T}_h^{\rm in}  \Big)\cdot \vn }.
\end{align*}
Consequently, applying the interpolation estimates and \eqref{ap4} we have
\[
\abs{e_{D \vu}^h} \aleq h \int_0^{\tau}\intfacesint{ \abs{ \jump{\vuh }}  }  \dt + h \int_0^{\tau} \intfacesext{ \abs{ \vuh^{\rm in} }} \dt \aleq h^{(1-\alpha)/2} + h.
\]

\

{\bf Step 2 -- compatibility of discrete temperature gradients   $e_{D \vt}^h$.} Let
\begin{equation} \label{vt-test-1}
\hvth(x) =  \begin{cases}
\vthB & \mbox{if } x \in K \subset Q, \facesK \cap \facesext \neq \emptyset, \\
\Pim \hvt & \mbox{otherwise},  \\
\end{cases}
\ \mbox{ where } \ \avs{\hvth}_{\sigma \in \facesext} = \vthB.
\end{equation}

%\begin{equation} \label{vt-test-1}
%\hvth(x) =  \begin{cases}
%\vthB & \mbox{if } x \notin Q, \\
%\vthB & \mbox{if } x \in K \subset Q, K \cap \facesext \neq \emptyset, \\
%\Pim \hvt & \mbox{otherwise}  \\
%\end{cases}
%\ \mbox{ implying } \ \avs{\hvth}_{\sigma} = \vthB, \jump{\hvth} = 0, \sigma \in \facesext.
%\end{equation}
Then we have
\begin{align*}
&\intQ{ (\hvt - \vth) \Div \bfphi }  = \intQ{ (\hvt_h - \vth) \Div \bfphi } + \intQ{ (\hvt - \hvt_h) \Div \bfphi }
\br
&= \sum_K (\hvt_h - \vth) \sum_{\sigma \in \facesK} |\sigma|\Piw \bfphi \cdot \vn + \intQ{ (\hvt - \hvt_h) \Div \bfphi }
%\br
%& = \intQ{ (\hvt_h - \vth) \Divmesh \Piw \bfphi } + \intQ{ (\hvt - \hvt_h) \Div \bfphi }
\br
& = - \intfacesint{ \Piw \bfphi \cdot \vc{n} \jump{ \hvt_h - \vth}} + \intfacesext{ \Piw \bfphi \cdot \vn \; (\vthB-\vth^{\rm in})} + \intQ{ (\hvt - \hvt_h) \Div \bfphi }
\end{align*}
and
\begin{align*}
& \intQ{ (\Grad\hvt - \Gradd\vth ) \cdot \bfphi } = \intQ{ \Gradd ( \hvt_h - \vth )\cdot  \bfphi } + \intQ{ (\Grad\hvt - \Gradd \hvt_h ) \cdot
\bfphi }
\br
& = \intfacesint{ \Pi_{D} \bfphi \cdot \vc{n} \jump{ \hvt_h - \vth}} + \frac12 \intfacesext{ \Pi_{D} \bfphi \cdot \vc{n} \jump{ \hvt_h - \vth}} + \intQ{ (\Grad\hvt - \Gradd \hvt_h ) \cdot \bfphi}
\br
& = \intfacesint{ \Pi_{D} \bfphi \cdot \vc{n} \jump{ \hvt_h - \vth}} - \intfacesext{ \Pi_{D} \bfphi \cdot \vc{n} (\vthB-\vth^{\rm in})} + \intQ{ (\Grad\hvt - \Gradd \hvt_h ) \cdot \bfphi},
\end{align*}
where $\Pi_D \bfphi = (\Pi_D^{(1)} \phi_1, \dots, \Pi_D^{(d)} \phi_d)^t, \ \bfphi  = (\phi_1, \dots, \phi_d)^t$ and
\[
\Pi_D^{(i)} \phi (x) = \sum_{\sigma \in \facesi} \frac{\mathds{1}_{D_{\sigma}}(x)}{|D_{\sigma}|} \int_{D_{\sigma}} \phi \dx \quad
\mbox{for any } \phi\in L^1(Q).
\]
Consequently, we have
\begin{align*}
\abs{e_{D \vt}^h}
&\leq  \abs{\int_0^{\tau} \intfacesint{ (\Pi_{D} \bfphi -  \Piw \bfphi) \cdot \vc{n} \jump{ \hvt_h - \vth}} \dt} + \abs{ \int_0^{\tau} \intfacesext{ ( \Pi_{D} \bfphi - \Piw \bfphi)\cdot \vc{n} (\vthB-\vth^{\rm in})}\dt}
\br
& \hspace{1cm}+ \abs{\intTauQ{ (\Grad\hvt - \Gradd \hvt_h ) \bfphi}} + \abs{\intTauQ{ (\hvt - \hvt_h) \Div \bfphi }}
\br
&\aleq h \int_0^{\tau}\intfacesint{ \abs{ \jump{\vth }}  }  \dt + h \int_0^{\tau} \intfacesext{ \abs{ \vthB-\vth^{\rm in}}} \dt + h\aleq h.
\end{align*}

\

{\bf Step 3 -- consistency error in the entropy balance $e_{S,1}^h+e_{S,2}^h$.} Let $\phi \in C^2_c([0,T] \times Q)$ and let us consider the following approximation
\begin{equation}\label{vt-test-2}
\phi_h(x)=  \begin{cases}
0 & \mbox{if } x \notin Q, \\
0 & \mbox{if } x \in K \subset Q, K \cap \facesext \neq \emptyset, \\
\Pim \phi & \mbox{otherwise},  \\
\end{cases}
 \mbox{ implying } \ \avs{\phi_h}_{\sigma \in \facesext} = \jump{\phi_h}_{\sigma \in \facesext} = 0. %\sigma \in \facesext.
\end{equation}
Recalling \cite[Lemma 4.5]{BLMSY} and \cite[Appendix C]{LSY_penalty} the consistency entropy error can be defined in the following way:
\begin{align*}
& -e_{S,1}^h(\phi) =E_t(\vrh s_h,\phi) + E_{s,F}(\vrh s_h,\phi) + E_{s,\Grad\vt}(\phi)+ E_{s,res}(\phi),
\br
&  -e_{S,2}^h(\phi)  = \intn D_{s}(\phi_h) \dt  - \frac{\kappa}{ h }  \int_{\tau}^{t^{n+1}} \intfacesint{ \avs{\phi_h} \jump{\vth } \jump{\frac1{\vth}}} \dt  + \int_{\tau}^{t^{n+1}}\intQ{ \difuh \frac{ \phi_h}{\vth}  }\dt  \geq 0
\end{align*}
with $\tau \in [t^n, t^{n+1})$.
Here
\begin{align*}
E_t(\vrh s_h  ,\phi) &= \left[ \intQ{\vrh s_h  \phi}\right]_{t=0}^\tau - \intTauQ{ \vrh s_h  \pd_t \phi} - \intn \intQ{D_t (\vrh s_h ) \phi_h }\dt ,
\\
E_{s,F}(\vrh s_h,\phi) &= - \intTauQ{\vrh s_h \vuh \cdot \Grad \phi} + \intn \intfacesint{ \Up [\vrh s_h, \bm{u}_h] \jump{ \phi_h}} \dt
\br
&= -  \frac12  \int_0^{t^{n+1}} \intfacesint{ |\avs{\vuh} \cdot \vc{n}| \jump{\vrh s_h } \jump{ \phi_h} } \dt
 - \frac14 \int_0^{t^{n+1}}  \intfacesint{ \jump{\vuh} \cdot \vc{n} \jump{\vrh s_h } \jump{ \phi_h} } \dt
 \br
&\quad  - \muh \int_0^{t^{n+1}}  \intfacesint{ \jump{\vrh s_h } \jump{ \phi_h} }  \dt + \int_{\tau}^{t^{n+1}} \intQ{\vrh s_h \vuh \cdot \Grad \phi} \dt,
\\
%E_{s,F}(r_h,\phi) &= - \intTauQ{r_h \vuh \cdot \Grad \phi} + \intn \intfacesint{ \Up [r_h, \bm{u}_h] \jump{ \phi_h}} \dt
%\br
%&= -  \frac12  \int_0^{t^{n+1}} \intfacesint{ |\avs{\vuh} \cdot \vc{n}| \jump{r_h } \jump{ \phi_h} } \dt
% - \frac14 \int_0^{t^{n+1}}  \intfacesint{ \jump{\vuh} \cdot \vc{n} \jump{r_h } \jump{ \phi_h} } \dt
% \br
%&\quad  - \muh \int_0^{t^{n+1}}  \intfacesint{ \jump{r_h } \jump{ \phi_h} }  \dt + \int_{\tau}^{t^{n+1}} \intQ{r_h \vuh \cdot \Grad \phi} \dt, \quad r_h = \vrh s_h,
%\\
E_{s,\Grad\vt}(\phi)
&=
-\int_0^{t^{n+1}} \intfacesint{ \frac{\kappa}{ h } \jump{ \phi_h} \jump{\vth}\avs{\frac{1}{\vth}}} \dt + \intTauQ{ \frac{\kappa}{\vth} \Gradd \vth \cdot \Grad \phi }
\br
&\quad - \frac{\kappa}{ h }  \int_0^{\tau} \intfacesint{ \avs{\phi_h} \jump{\vth }\jump{\frac1{\vth}} } \dt  - \intTauQ{ \frac{ \kappa \phi \chi_h}{ \vth^2} \abs{\Gradd \vth }^2 }
\br
& := E_{s,\Grad\vt}^{(1)} + E_{s,\Grad\vt}^{(2)},
%&\quad + \frac{\kappa}{ h }  \int_0^{\tau} \intfacesint{ \avs{\phi_h} \frac{\jump{\vth }^2}{\vthout \vth} } \dt  + 2 \frac{\kappa}{ h }  \int_0^{t^{n+1}} \intfacesext{ \frac{ \vt_h^{\rm in} - \vthB }{\vt_h^{\rm in}}  \phi_h^{\rm in} } \dt
%\br
%&\quad - \intTauQ{ \frac{ \kappa \phi \chi_h}{ \vth^2} \abs{\Gradd \vth }^2 },
\\
E_{s,res}(\phi) & = \intn R_{s}(\phi_h) \dt \quad \mbox{ with } R_{s} \ \mbox{given in }  \eqref{entropy_dissipation}.
\end{align*}
%\end{subequations}

Hence, analogously to \cite[Appendix C]{BLMSY} and \cite[Appendix D.2]{LSY_penalty}, applying the interpolation estimates, boundedness assumption \eqref{HP} and a priori uniform bounds \eqref{ap},  we have
\begin{align*}
& \abs{E_t(\vrh s_h,\phi)} \aleq \TS,
\quad \abs{E_{s,F}(\vrh s_h,\phi)} \aleq h^{(1-\alpha)/2} + h^{(1+\alpha)/2},
\quad \abs{E_{s,res}(\phi)} \aleq h^{(1+\alpha)/2} + h^{1+\alpha}.
\end{align*}
The rest is to estimate $E_{s,\Grad\vt}$.

With the definition of $\phi_h(x)$, cf. \eqref{vt-test-2}, we have
%\begin{align*}
%E_{s,\Grad\vt}(\phi)
%&=-\frac{\kappa}{ h } \int_0^{\tau} \intfacesint{\jump{ \phi_h} \jump{\vth}\avs{\frac{1}{\vth}}} + \intTauQ{ \frac{\kappa}{\vth} \Gradd \vth \cdot \Gradd \phi_h }
%\br
%&\quad + \intTauQ{ \frac{\kappa}{\vth} \Gradd \vth \cdot (\Grad \phi - \Gradd \phi_h)}
%\br
%&\quad  - \frac{\kappa}{ h } \int_{\tau}^{t^{n+1}} \intfacesint{\jump{ \phi_h} \jump{\vth}\avs{\frac{1}{\vth}}}
%\br
%&\quad + \frac{\kappa}{ h } \int_0^{\tau} \intfacesint{ \avs{\phi_h} \frac{\jump{\vth }^2}{\vthout \vth} } \dt
%- \intTauQ{ \frac{ \kappa \phi \chi_h}{ \vth} \abs{\Gradd \vth }^2 }
%\br
%&= \sum_{i=1}^4 I_i.
%\end{align*}
%Hence, we have
\begin{align}\label{a1}
\intQ{ \frac{\kappa}{\vth} \Gradd \vth \cdot \Gradd  \phi_h } & = \frac{\kappa}{ h } \intfacesint{ \avs{\frac{1}{\vth}}\jump{\vth} \jump{\phi_h} } + \frac{2\kappa}{ h } \intfacesext{\frac{ \vthB - \vth^{\rm in} }{\vth^{\rm in}} \jump{\phi_h} }
\\
& = \frac{\kappa}{ h } \intfacesint{ \avs{\frac{1}{\vth}}\jump{\vth} \jump{ \phi_h} }  \nonumber
\end{align}
leading to
\begin{align*}
\abs{E_{s,\Grad\vt}^{(1)}} \leq \abs{\intTauQ{ \frac{\kappa}{\vth} \Gradd \vth \cdot (\Grad \phi - \Gradd \phi_h)}} + \abs{\int_{\tau}^{t^{n+1}}  \intQ{ \frac{\kappa}{\vth} \Gradd \vth \cdot \Gradd \phi_h }  \dt} \aleq h + \TS^{1/2}.
\end{align*}
On the other hand, using the definition of $\chi_h$, cf. \eqref{cp4_1}, we have
\begin{align*}
&\intQ{ \frac{ \kappa \phi \chi_h}{  \vth^2} \abs{\Gradd \vth }^2 }
=  \frac{\kappa}{ h } \intfacesint{ \Pi_{D}\phi \frac{\jump{\vth }^2}{\vthout \vth} }
+ \frac{2 \kappa}{ h }  \intfacesext{ \Pi_{D} \phi \frac{ (\vt_h^{\rm in} - \vthB)^2 }{  \vth \vthB}  },
%&\intQ{ \frac{ \kappa \phi \chi_h}{  \vth^2} \abs{\Gradd \vth }^2 }
%=\int_{D_{\sigma}}{ \frac{ \kappa \phi }{  \vth \vthout } \abs{\Gradd \vth }^2 } \dx + \int_{Q \setminus D_{\sigma}}{ \frac{ \kappa \phi }{  \vth \vthB} \abs{\Gradd \vth }^2 } \dx
%\br
%&=  \frac{\kappa}{ h } \intfacesint{ \Pi_{D}\phi \frac{\jump{\vth }^2}{\vthout \vth} }
%+ + \int_{Q \setminus D_{\sigma}}{ \frac{ \kappa \phi }{  \vth \vthB} \abs{\Gradd \vth }^2 } \dx
\end{align*}
that  yields
\begin{align*}
\abs{E_{s,\Grad\vt}^{(2)}} & \aleq \frac{\kappa}{ h } \int_0^{\tau}\intfacesint{ \abs{ \Pi_{D}\phi  - \avs{\phi_h}}\frac{\jump{\vth }^2}{\vthout \vth} } \dt
+  \frac{2 \kappa}{ h } \int_0^{\tau} \intfacesext{  \Pi_{D}\phi (\vt_h^{\rm in} - \vthB)^2  } \dt
%+h  \norm{\Gradd \vth}_{L^2((0,T)\times Q)}^2
 \\&\aleq h \norm{\Gradd \vth}_{L^2((0,T)\times Q; \R^d)}^2.
\end{align*}
Altogether, we  obtain
\begin{align*}
\abs{E_{s,\Grad\vt}(\phi)} \aleq h + \TS^{1/2}.
\end{align*}

\

{\bf Step 4 -- consistency error in the ballistic energy balance $e_{E,1}^h+e_{E,2}^h$.}
Let $\tau \in [t^n, t^{n+1})$.
Thanks to $\left[ \intQ{r_h}\right]_0^{\tau} = \intn \intQ{D_t r_h} \dt$, we obtain from the ballistic energy balance \eqref{BE0} with $\hvth$ stated in \eqref{vt-test-1}
that
\begin{align*}
&\left[ \intQ{ \left(\frac{1}{2} \vrh |\vuh |^2 + c_v \vrh \vth - \vrh s_h \hvth \right) (t, \cdot)} \right]_{t=0}^{t=\tau}
+ \intn\intQ{  \frac{\hvth}{\vth }\difuh} \dt
\br
& \quad - \frac{\kappa}{h} \intn\intfacesint{ \avs{\hvth} \jump{\vth } \jump{\frac{1}{\vth } } } \dt
+2 \frac{\kappa}{ h }  \intn\intfacesext{ \frac{ (\vt_h^{\rm in} - \vthB)^2 }{\vt_h^{\rm in}}   } \dt
\br
& { =} \intn \intQ{\vrh \vc{g} \cdot \vuh}  \dt
- \intn \intQ{\bigg(  \vrh s_h D_t \hvth + \vrh s_h \vuh \cdot \Gradh \hvth \bigg)} \dt
\\& \quad
+ \intn \intfacesint{ \frac{\kappa}{h} \jump{\vth } \jump{\phi_h} \avs{ \frac{1}{\vth } } }  \dt + \intn \bigg(R_B(\hvth) - R_s(\hvth)\bigg)\, \dt
\\& \quad
- \intn \bigg(D_s(\hvth)+D_E\bigg)\, \dt.
\end{align*}

%Further, in view of the following estimates
%\begin{align*}
%& \int_{\tau}^{t^{n+1}} \intQ{ \frac{\hvth}{\vth } \difuh} \dt \geq 0, \\
%&- \frac{\kappa}{h}  \int_{\tau}^{t^{n+1}} \intfacesint{ \avs{\hvth} \jump{\vth } \jump{\frac{1}{\vth } } } \dt
%+2 \frac{\kappa}{ h }  \int_{\tau}^{t^{n+1}}\intfacesext{ \frac{ (\vt_h^{\rm in} - \vthB)^2 }{\vt_h^{\rm in}}   } \dt \geq 0,
%\end{align*}
Hence, we reformulate the ballistic energy balance as
\begin{align*}
&\left[ \intQ{ \left(\frac{1}{2} \vrh |\vuh |^2 + c_v \vrh \vth - \vrh s_h \hvt \right) (t, \cdot)} \right]_{t=0}^{t=\tau}
+ \intTauQ{\left(  \frac{ \kappa \hvt \chi_h}{\vth^2} \; \abs{\Gradd \vth}^2 + \frac{\hvt}{\vth }\difuh \right)}
\br &
{ = } \intTauQ{\vrh \vc{g} \cdot \vuh}
- \intTauQ{\left( \vrh s_h \partial_t \hvt + \vrh s_h \vuh \cdot \Grad \hvt - \frac{\kappa}{\vth} \; \Gradd \vth \cdot \Grad \hvt \right)} - e_{E,1}^{h}(\hvt) - e_{E,2}^{h}(\hvt)
\end{align*}
with the consistency errors defined by
\begin{align*}
-e_{E,1}^{h}(\hvt) & = E_{B, \vt} +E_{B, \Grad\vt} + E_{B, res} + \intn R_B(\hvth) \, \dt - \intn R_s(\hvth)\, \dt,
\br
 e_{E,2}^{h}(\hvt)  & = \intn \bigg(D_s(\hvth)+D_E\bigg)\, \dt + \int_{\tau}^{t^{n+1}} \intQ{ \frac{\hvth}{\vth } \difuh} \dt
\br
& - \frac{\kappa}{h}  \int_{\tau}^{t^{n+1}} \intfacesint{ \avs{\hvth} \jump{\vth } \jump{\frac{1}{\vth } } } \dt
+2 \frac{\kappa}{ h }  \int_{\tau}^{t^{n+1}}\intfacesext{ \frac{ (\vt_h^{\rm in} - \vthB)^2 }{\vt_h^{\rm in}}   } \dt \geq 0.
\end{align*}
Here
\begin{align*}
E_{B, \vt} &= \left[ \intQ{ \left( \vrh s_h (\hvt - \hvth) \right) (t, \cdot)} \right]_{t=0}^{t=\tau} + \intTauQ{ \frac{(\hvt-\hvth)}{\vth }\difuh} + \int_{\tau}^{t^{n+1}} \intQ{\vrh \vc{g} \cdot \vuh}  \dt,
\br
E_{B, \Grad\vt} & = \frac{\kappa}{h}   \int_0^{\tau} \intfacesint{ \avs{\hvth} \jump{\vth } \jump{\frac{1}{\vth } } } \dt
 - 2 \frac{\kappa}{ h }  \int_0^{\tau}\intfacesext{ \frac{ (\vt_h^{\rm in} - \vthB)^2 }{\vt_h^{\rm in}}   } \dt
 \br
 & \quad + \intTauQ{ \frac{ \kappa \hvt \chi_h}{\vth^2} \; \abs{\Gradd \vth}^2}
 \br
&= -\frac{\kappa}{ h }  \int_0^{\tau} \intfacesint{ \bigg( \avs{\hvth} - \Pi_{D}\hvt \bigg) \frac{\jump{\vth }^2}{\vthout \vth} } \dt
+2 \frac{\kappa}{ h }  \int_0^{\tau}\intfacesext{ \frac{ (\vt_h^{\rm in} - \vthB)^2 }{\vt_h^{\rm in}}  \left( \frac{\Pi_{D}\hvt }{ \vthB} - 1\right) } \dt
\end{align*}
and
\begin{align*}
E_{B, res} & = \intTauQ{\left(  \vrh s_h \partial_t \hvt + \vrh s_h \vuh \cdot \Grad \hvt - \kappa \frac1{\vth} \; \Gradd \vth \cdot \Grad \hvt \right)}
\br
&\quad
-\intn\intQ{ \vrh s_h  \bigg(  D_t \hvth + \vuh \cdot \Gradh \hvth  \bigg)} \dt
+  \intn \intfacesint{ \frac{\kappa}{h} \jump{\vth } \jump{\phi_h} \avs{ \frac{1}{\vth } } }  \dt
\br
& =\sum_{i=1}^4 E_{B, res}^{(i)},
\end{align*}
where
\begin{align*}
&E_{B, res}^{(1)} =  \intTauQ{ \bigg( \vrh s_h ( \partial_t \hvt - D_t \hvth) + \vrh s_h \vuh \cdot (\Grad \hvt- \Gradh \hvth) \bigg)},
\br
&E_{B, res}^{(2)} =
-\intTauQ{  \kappa \frac1{\vth} \; \Gradd \vth \cdot (\Grad \hvt- \Gradd \hvth)} ,
\br
&E_{B, res}^{(3)} =
-\intTauQ{  \kappa \frac1{\vth} \; \Gradd \vth \cdot \Gradd \hvth} + \int_0^{\tau} \intfacesint{ \frac{\kappa}{h} \jump{\vth } \jump{\hvth} \avs{ \frac{1}{\vth } } }  \dt,
\br
&E_{B, res}^{(4)} =
-\int^{t^{n+1}}_{\tau}\intQ{  \vrh s_h  \bigg( D_t \hvth + \vuh \cdot \Gradh \hvth \bigg)} \dt
+ \int^{t^{n+1}}_{\tau}\intfacesint{ \frac{\kappa}{h} \jump{\vth } \jump{\hvth} \avs{ \frac{1}{\vth } } }  \dt.
%&:= \sum_{i=1}^4 E_{B, res}^{(i)}.
\end{align*}
Thanks to \eqref{a1} and \eqref{vt-test-1} we have $E_{B, res}^{(3)} = 0$.
Applying the interpolation estimates, boundedness assumption \eqref{HP} and a priori uniform bounds \eqref{ap}, we estimate  all terms in $e_{E,1}^{h}(\hvt)$ as follows:
\begin{align*}
& \abs{E_{B, \vt}} \aleq h + \TS, \quad \abs{E_{B, \Grad\vt}} \aleq h \norm{\Gradd \vth}_{L^2((0,T)\times Q; \R^d)}^2, \quad \abs{ E_{B, res}^{(1)}} +  \abs{ E_{B, res}^{(2)}} + \abs{ E_{B, res}^{(4)}}  \aleq h+\TS
\end{align*}
and
\begin{align*}
& \Bigabs{\intn R_{B,1}(\hvth) \dt}  \aleq \TS^{1/2}, \quad \Bigabs{\intn R_{B,2}(\hvth) \dt}  \aleq h^{(1-\alpha)/2}.
\end{align*}

Summarizing, we complete the proof and obtain the bounded consistency property of the FV scheme \eqref{VFV}.
\end{proof}

\end{document}